\newtheorem{theorem}{Theorem}
\newtheorem{lemma}{Lemma}
\newtheorem{definition}{Definition}
\newtheorem{proposition}[lemma]{Proposition}
\newcommand{\lra}{\leftrightarrow}
\newcommand{\sa}[1]{\ensuremath{\,{\buildrel #1 \over \longleftrightarrow}\,}}
\title{Limiting distribution of the chemical distance in high dimensional critical percolation}
\begin{document}
   \author{Shirshendu Chatterjee \thanks{Email: shirshendu@ccny.cuny.edu.} \\ \small{The City College of New York and CUNY Graduate Center}  \and Pranav Chinmay \thanks{Email: pchinmay@gradcenter.cuny.edu.} \\ \small{CUNY Graduate Center}  \and Jack Hanson \thanks{Email: jack.hanson@uni-hamburg.de.}\\ \small{Universit\"at Hamburg, The City College of New York, and CUNY Graduate Center} \and Philippe Sosoe \thanks{Email: ps934@cornell.edu.} \\ \small{Cornell University} }
\maketitle

\begin{abstract}
    We identify the asymptotic distribution of the chemical distance in high-dimensional critical Bernoulli percolation. Namely, we show that the distance between the origin and a distant vertex conditioned to lie in the cluster of the origin converges in distribution when rescaled by a multiple the square of the Euclidean distance. The limiting distribution has an explicit density and coincides with the distribution of the time for a Brownian motion in $\mathbb{R}^d$ conditioned to hit a given unit vector to reach its target.
    
    Our result follows from a general moment computation for quantities that have an additive structure across the pivotal edges on a long range connection in percolation. In addition to the number of pivotal edges in a long connection, this also includes the effective resistance.
    
    The existence of the incipient infinite cluster limit, in a form recently established, plays a key role in the derivation of our results.
\end{abstract}

\section{Introduction}
This paper concerns the chemical distance in critical Bernoulli percolation clusters. The term \emph{chemical distance} denotes the graph distance inside percolation clusters. That is to say, the chemical distance between two sets in the same open cluster is the minimal number of edges in any open path between the two sets. The terminology appears to originate in the physics literature. See the early study \cite{HTWB}, for example. The aim of the present paper is to study the behavior of the point-to-point distance in critical percolation in sufficiently high dimensions.

High-dimensional percolation is expected to be well-approximated by critical branching random walks (BRW) in $\mathbb{Z}^d$. See \cite{HH}, especially Chapter 2, for more on this analogy. Consider such a BRW conditioned on the existence of a connection between the origin $(0,\ldots,0)\in \mathbb{Z}^d$ an a distant vertex $x$. By the definition of the model, the path connecting these two vertices is a random walk path ending at $x$. In particular, it contains on the order of $|x|^2$ edges. In critical percolation, given the scaling of the two-point function:
\[\tau(0,x):=\mathbb{P}(0\leftrightarrow x) \asymp |x|^{-d+2},\]
an analogous result can be established. Conditioned on $x$ lying in the cluster of the origin, the distance to $x$ scales as the square of the Euclidean distance \cite{CHS,HHH,KN2}. Similar considerations show that the distance $S_n$ from the origin to the boundary of the box $\Lambda_n=[-n,n]^d$ is of order $n^2$ if one conditions on the existence of a connection.

This random walk scaling is in stark contrast to the behavior of the model off the critical point, where the scaling is known to be linear with very high probability, for sufficiently large distances past the correlation length. For the subcritical case, this is an easy consequence of exponential decay of the cluster volume established by Aizenman and Newman \cite{AN} (see the introduction to \cite{DHS}), whereas in the supercritical case, it follows from results of Grimmett and Marstrand \cite{GM}. The main result of the influential paper of Antal and Pisztora \cite{AP} gives a quantitative estimate for the probability that the distance deviates from linear size.

In \cite{CHS}, the first, third and fourth authors of the present paper took initial steps towards understanding the distribution of the chemical distance on the scale $n^2$. In particular, we showed the lower tail estimates
\begin{equation}\label{eqn: lower}
-\log \mathbb{P}_{p_c}(S_n\le\lambda n^2\mid 0\leftrightarrow \partial \Lambda_n)\asymp \lambda^{-1},
\end{equation}
as well as the upper tail bound
\[\mathbb{P}_{p_c}(S_n>\lambda n^2)\le \exp(-c\lambda).\]
For the lower tail, van der Hofstad and Sapozhnikov had previously obtained
\begin{equation}\label{eqn: upper}
\mathbb{P}_{p_c}(S_n\le \lambda n^2\mid 0\leftrightarrow \partial \Lambda_n)\le \exp(-\lambda^{-1/2}).
\end{equation}
We note that the estimates \eqref{eqn: lower} and \eqref{eqn: upper} are consistent with the behavior of the exit time of a Brownian motion from the a ball of radius $n$, as one would expect from the BRW picture.

In this paper, we take the next step and identify the asymptotic fluctuations of the chemical distance between $0$ and $x$ on its natural scale $|x|^2$. Given the random walk approximation alluded to above, one expects these fluctuations to be given in terms of an appropriately defined version of Brownian motion. We prove that, for the point-to-point distance, one can derive the exact limiting distribution. The details can be found in Section \ref{section: BM}. Two key inputs for our result are a) the asymptotic for the two-point function \eqref{eqn: two-pt} derived by van der Hofstad, Hara and Slade \cite{HHS}, and extended to to dimensions $d\ge 11$ by van der Hofstad and Fitzner \cite{FH}; and b) our own previous result \cite{CCHS} on convergence to the incipient infinite cluster.

\subsection{Main Result}
\begin{definition}
    The set of (unordered) bonds on the lattice is denoted by $\mathcal{E}(\mathbb{Z}^d)$:
     \begin{equation}
        \mathcal{E}(\mathbb{Z}^d) = \{\{u,v\}: \|u-v\|_1 = 1\}.
    \end{equation}
    The set of ordered bonds on the lattice is the set of ordered pairs
    \begin{equation}
        \overline{\mathcal{E}}(\mathbb{Z}^d) = \{(u,v): \|u-v\|_1 = 1\}.
    \end{equation}
\end{definition}
Unless explicitly mentioned otherwise, ``bond" will henceforth always refer to ``ordered bond"; we use ``bond'' and ``edge'' interchangeably. \\

We recall the definition of independent Bernoulli bond percolation on $\mathbb{Z}^d$ at the critical density $p=p_c$. Formally, we consider the product probability space $(\Omega,\mathcal{F},\mathbb{P})$, where $\Omega$ is the set of configurations 
\[\Omega=\{0,1\}^{\mathcal{E}(\mathbb{Z}^d)},\]
with $\mathcal{F}$ the product $\sigma$-algebra and $\mathbb{P}$ the infinite Bernoulli product measure with parameter $p=p_c(\mathbb{Z}^d)$, the critical parameter for Bernoulli bond percolation. For $\omega\in \Omega$, we let $\omega_e\in \{0,1\}$ be the status of the edge $e$. If $\omega_e = 1$, we call the edge $e$ ``open", and if $\omega_e = 0$, we call the edge $e$ ``closed".

\begin{theorem}\label{thm: main}
    Let $\mathbf{e}_1:=(1,0,\ldots,0)$ be the unit vector in the first coordinate direction in $\mathbb{Z}^d$. When there is no risk of
    confusion, we denote the origin $(0,\ldots,0)\in \mathbb{Z}^d$ simply by $0$. Define the following quantities:
    \begin{enumerate}
        \item $P_n$ denotes the number of pivotal edges for the connection from $0$ to $n\mathbf{e}_1$,
        \item $R_n$ denotes the effective resistance from $0$ to $n\mathbf{e}_1$ and
        \item $D_n$ denotes the chemical distance from $0$ to $n\mathbf{e}_1$.
    \end{enumerate}
     Let 
    \begin{equation}\label{eqn: zd-def}
        z_d:= \frac{\Gamma(\frac{d}{2}-1)}{2\pi^{\frac{d}{2}}},
    \end{equation}
    and suppose \eqref{eqn: two-pt} holds. (By \cite{HHS} and \cite{FH}, this last assumption is satisfied whenever $d\geq 11$.)
    
    Then there are constants $\mathbf{c}$, $\beta$, $\alpha_p$, $\alpha_r$ and $\alpha_d$ such that, conditioned on the existence of a connection from $0=(0,\ldots,0)$ to $n\mathbf{e}_1$, each of the sequences
    \[X_n=\frac{z_d P_n}{2d\alpha_p\mathbf{c} \beta n^2},\quad Y_n=\frac{ z_d R_n}{2d\alpha_r \mathbf{c}\beta n^2}, \quad Z_n=\frac{z_d D_n}{2d \alpha_d \mathbf{c}\beta n^2},\]
    converges in distribution to a random variable with density
    \begin{equation}\label{eqn: limit-density}
    f_d(t)=\frac{1}{2^{\frac{d}{2}-1}\Gamma(\frac{d}{2}-1)}t^{-\frac{d}{2}}e^{-1/2t}, \quad t\ge 0.
    \end{equation}
    The density $f_d$ is that of the time it takes a $d$-dimensional Brownian motion started at $0$ and conditioned to hit a fixed unit vector to reach said vector.
\end{theorem}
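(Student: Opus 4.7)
The plan is to treat the three statistics in parallel via a single additive framework. Decomposing the cluster of $0\leftrightarrow n\mathbf{e}_1$ into sausages joined by pivotal edges, each of $P_n$, $R_n$, $D_n$ is (up to lower-order corrections) a sum over the backbone pivotals of a per-sausage contribution: constantly $1$ for $P_n$, the effective resistance of the sausage for $R_n$, and the shortest sausage-crossing for $D_n$. The IIC-convergence result of \cite{CCHS} supplies a common limiting law for the local geometry about each pivotal under $\mathbb{P}(\,\cdot\mid 0\leftrightarrow n\mathbf{e}_1)$, so the per-sausage contributions become asymptotically i.i.d.\ with finite positive means $\alpha_p=1$, $\alpha_r$, $\alpha_d$. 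A law-of-large-numbers argument across the $\asymp n^2$ pivotals then reduces the convergence of $Y_n$ and $Z_n$ to that of $X_n$.

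For $X_n$ I would use the method of factorial moments. Writing
\begin{equation*}
\mathbb{E}\bigl[(P_n)_k\mathbf{1}_{0\leftrightarrow n\mathbf{e}_1}\bigr]=\sum_{(y_1,\ldots,y_k)\text{ distinct}}\mathbb{P}\bigl(y_1,\ldots,y_k\text{ pivotal for }0\leftrightarrow n\mathbf{e}_1\bigr),
\end{equation*}
and partitioning each probability by the backbone order of the $y_i$, the pivotal decomposition together with approximate sausage-independence factorizes it, per permutation $\sigma$, into $\tau(0,y_{\sigma(1)})\tau(y_{\sigma(1)},y_{\sigma(2)})\cdots\tau(y_{\sigma(k)},n\mathbf{e}_1)$. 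Relabeling summation indices and summing over $\sigma$ then gives
\[
\mathbb{E}\bigl[(P_n)_k\mathbf{1}_{0\leftrightarrow n\mathbf{e}_1}\bigr]\sim k!\,\tau^{*(k+1)}(0,n\mathbf{e}_1).
\]
Inserting $\tau(0,y)\sim 2d\beta\mathbf{c}\,|y|^{-(d-2)}$ from \cite{HHS,FH} and rescaling $y_i=nz_i$, the right-hand side tends to $(2d\beta\mathbf{c})^{k+1}n^{2k+2-d}$ times the $(k+1)$-fold convolution of $|\cdot|^{-(d-2)}$ at $\mathbf{e}_1$. Using the representation $z_d|y|^{-(d-2)}=\int_0^\infty p_s(y)\,ds$, with $p_s$ the standard Brownian heat kernel, together with the Chapman--Kolmogorov identity, this convolution collapses to $z_d^{-(k+1)}\int_0^\infty (s^k/k!)\,p_s(\mathbf{e}_1)\,ds$. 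Dividing by $\tau(0,n\mathbf{e}_1)\sim 2d\beta\mathbf{c}\,n^{-(d-2)}$ and applying the normalization $X_n=z_d P_n/(2d\beta\mathbf{c}\,n^2)$ yields exactly the $k$th moment of $f_d$:
\[
\int_0^\infty s^k f_d(s)\,ds=\frac{\Gamma(d/2-k-1)}{2^k\Gamma(d/2-1)},\qquad k<\tfrac{d}{2}-1.
\]

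The hard part will be the pivotal-factorization estimate: bounding, uniformly in $(y_1,\ldots,y_k)$, the error in approximating $\mathbb{P}(y_1,\ldots,y_k\text{ pivotal},\,0\leftrightarrow n\mathbf{e}_1)$ by a product of two-point functions. This requires lace-expansion machinery and triangle-condition-type bounds; the IIC-convergence from \cite{CCHS} is what makes it possible by delivering the per-sausage asymptotic law in a form uniform enough to extract the leading moment. A secondary difficulty is that $f_d$ has only the finite collection of moments above, so the classical method-of-moments does not by itself identify the limit; I would bridge this gap by proving tightness of $(X_n)$ via the upper-tail bound on $S_n$ from \cite{CHS}, and identifying any subsequential limit through its Laplace transform, which the same heat-kernel identity yields in closed form.
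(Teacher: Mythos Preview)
Your overall architecture---additive decomposition over pivotals, moment computation via two-point convolutions, and Brownian identification through the heat-kernel representation of $|x|^{2-d}$---matches the paper's. But two steps have genuine gaps.

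\textbf{The pivotal factorization is wrong at the level of constants.} You assert $\mathbb{E}[(P_n)_k\mathbf{1}_{0\leftrightarrow n\mathbf{e}_1}]\sim k!\,\tau^{*(k+1)}(0,n\mathbf{e}_1)$, i.e.\ that the probability of $k$ ordered pivotals is asymptotically the bare product $\prod_i\tau(\overline{f_{i-1}},\underline{f_i})$. This misses two multiplicative factors. First, converting ``$f_i$ is an open pivotal'' into ``disjoint clusters joined at $f_i$'' costs a factor $\beta=p_c/(1-p_c)$ per edge (Lemma~\ref{open-closed-lem} in the paper). Second, and more subtly, the event that $\overline{f_i}$ connects to $\underline{f_{i+1}}$ \emph{off} the earlier cluster $\zeta_{i-1}$ is strictly smaller than the unconstrained connection; its conditional probability converges, via the IIC limit, to the avoidance probability $\alpha_p=\nu_{\mathbf{e}_1}\otimes\widetilde{\nu}_0(\exists\text{ path to }\infty\text{ in }W_{\mathbf{e}_1}\text{ off }\widetilde{W}_0)$, which is strictly less than $1$. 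The correct asymptotic is $\mathbf{c}^{k+1}(\alpha_p\beta)^k(2d)^k$ times the convolution integral (Theorem~\ref{thm: local-limit}). Your claim $\alpha_p=1$ is therefore false, and your two-point asymptotic $\tau(0,y)\sim 2d\beta\mathbf{c}|y|^{2-d}$ is also incorrect (the constant is just $\mathbf{c}$). Establishing the factor $\alpha_p^k$ is exactly where the IIC convergence from \cite{CCHS} enters, and it is the technical heart of the paper (Lemmas~\ref{rec1}--\ref{rec3}).

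\textbf{The LLN reduction of $R_n,D_n$ to $P_n$ is not justified}, and the paper does not attempt it. The sausages are not independent, and proving they are ``asymptotically i.i.d.'' in a sense strong enough for a LLN is no easier than computing the moments of $R_n,D_n$ directly. The paper instead runs the same moment computation with the constant $1$ replaced by a general bounded \emph{local variable} $g(f,\zeta_{i-1}^*)$ encoding the truncated resistance or distance across the bubble at $f$; the constant $\alpha_g$ then absorbs the per-sausage mean. This unified treatment is what makes $\alpha_r,\alpha_d$ appear as explicit IIC expectations.

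Finally, your worry about the moment problem is legitimate, but the paper circumvents it more cleanly than a Laplace-transform argument: it first restricts the pivotal sum to edges in $[-Mn,Mn]^d$, for which the limiting distribution (the occupation time of $[-M,M]^d$ by the conditioned Brownian motion) has \emph{all} moments finite and is moment-determined; only afterwards does it send $M\to\infty$, using that the occupation time increases to $T_0$ almost surely. You would need a separate tightness argument; the paper's $M$-truncation gives it for free.
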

The constant $\beta=p_c/(1-p_c)$ appearing in the statement of the result is defined in terms of the 
critical probability $p_c$, while the constants $\alpha$ and $\mathbf{c}$ are defined in the discussion following the equations \eqref{eqn: two-pt} and \eqref{iic-constant-def}. 

{\bf Remark 1.} For clarity of notation, we have formulated Theorem \ref{thm: main} in terms of the vector $\mathbf{e}_1$, but it will be clear from the proof that the same result holds with only minor notational adjustments if $\mathbf{e}_1$ is replaced by a general unit vector $\mathbf{v}$ and  $n\mathbf{e}_1$ replaced by the lattice site nearest to $n\mathbf{v}$.

{\bf Remark 2} The conclusion of Theorem \ref{thm: main} also holds for spread-out percolation models with sufficently large spread parameter, whenever $d>6$. This will be clear from the proof, and the fact that the key inputs from \cite{CCHS} and \cite{HHS} are also valid for that model.

{\bf Remark 3} The density \eqref{eqn: limit-density} has  $\lfloor \frac{d}{2}-1\rfloor$ finite moments, while the higher moments are infinite. It is easy to see that the pre-limiting quantity, the chemical distance, does not have high finite moments either: the two-point function scaling implies that one can find pivotal edges for the connection between $0$ and $n\mathbf{e}_1$ outside of a box of size $M$ at a polynomial cost in $M$, so sufficiently high moments diverge.

\subsection{Motivation} 
We trust that the reader will find Theorem 1 to be of interest in its own right. An ancillary purpose of the current paper is to show how the construction of the IIC in the form obtained in our previous work \cite{CCHS} serves as a tool towards the construction of scaling limits of critical Bernoulli percolation in high dimensions. A different formulation of the scaling limit, involving conditioning on the \emph{volume} of the cluster of the origin, was considered in the seminal work of Hara and Slade \cite{HS1,HS2}. These authors proved convergence of rescaled 2- and 3-point correlation functions. Their results, which rely on deep expansion arguments, have not been significantly improved upon since their appearance more than two decades ago. Despite successive refinements of the lace expansion, most notably \cite{FH}, little progress was made on scaling limits in the intervening time. 

Here, we take a more geometric approach, with our IIC convergence result guaranteeing a form of mixing to decouple the neighborhoods of distant pivotal vertices. A key observation of the present paper is that the local behavior in the neighborhood of a pivotal edge can be described by a measure which is absolutely continuous with respect to the product of two copies of the IIC, corresponding to the two arms emanating from the central edge. (A similar quantity involving three clusters plays a key role in upcoming results on higher correlations.) This fact strongly distinguishes high-dimensional critical percolation from the well-studied situation in two dimensions.

Let us briefly indicate the natural next step in our program, to be pursued in forthcoming work. One might wish to consider the \emph{joint} distribution of distances of $k$ vertices to the origin, with $k$ varying. One expects the resulting quantities to be closely related to a variant of super-Brownian motion, the conjectured scaling limit of critical percolation. 

We note here that a scaling limit result for the cluster measure in long-range percolation was recently established in spectacular work of Hutchcroft \cite{Hutch1,Hutch2,Hutch3}. A corresponding result for the nearest-neighbor case by Hutchcroft and Blanc-Renaudie is announced in \cite{Hutch1}. We expect little overlap with our methods. Moreover, the joint convergence of distances is of a different nature than convergence as measures, since the latter limit does not retain information about the metric structure of clusters.

\subsection{Outline of the proof}
We begin by establishing some notation and collecting key results from the literature we use in this paper in Section \ref{sec: defs}. We then introduce a notion of additive (over pivotals) quantities and notation for the bubble of a pivotal edge, the set of edges doubly connected to the far end of the pivotal. There are central to our argument in Section \ref{sec: additive}. The distance between two vertices is an example of an additive quantity in our sense: it can be decomposed into a sum over pivotal edges: each pivotal contributes one plus the distance between the pivotal and the last vertex in its bubble, which we refer to as the \emph{head} of the bubble (defined precisely in Proposition \ref{prop: head}). To find the distribution of the distance, we must then compute moments of sums over pivotals. 

The computation in Section \ref{sec: moments} shows that the neighborhoods of pivotals at large distance (the typical case) can be decoupled using convergence to the IIC (Theorem \ref{thm:IICexistences}) if the summands are random variables that depend only on the neighborhood of the pivotal. For this purpose, we define a class of collections of random variables that depend only on local information near a pivotal in Section \ref{sec: local-v}. The main moment computation of this paper, Theorem \ref{thm: local-limit}, concerns sums over pivotals of such quantities, and is stated in Section \ref{sec: local-limit}.

The proof of Theorem \ref{thm: local-limit} occupies all of Section \ref{sec: moments}. We first show that we can concentrate on sums over distant pivotal edges and compute the moments by a recursion detailed in Section \ref{sec: recursive}, using the three central Lemmas \ref{rec1}, \ref{rec2} and \ref{rec3}. These lemmas constitute the technical core of this paper. The leading order terms in the asymptotics given in those lemmas are extracted in Section \ref{sec: leading}. It is here that we apply convergence to the IIC to successively decouple the contribution of each pivotal factor to the moments. Most of the error terms treated in Section \ref{sec: LOT} appear due to our truncating the expectations to fixed neighborhoods of the pivotals before applying Theorem \ref{thm:IICexistences}.

In Section \ref{sec: control}, we derive several truncation results to show that one can with high probability replace the distance and effective resistance across bubbles, quantities which in principle could depend on the status of edges at an arbitrary distance, by suitable local approximations in the sense of Definition \ref{def: local-var}, to which we can apply Theorem \ref{thm: local-limit}.

Section \ref{section: BM} relates the limiting moments $I_k(M)$ appearing in Theorem \ref{thm: local-limit} to a Brownian Motion conditioned to hit a given unit vector, suitably defined. The proof of Theorem \ref{thm: main}, which combines the approximation by local quantities, the moment computations, and the identification of the limiting moments from previous sections, appears in Section \ref{sec: final-proof}.

\section{Preliminaries}

\subsection{Definitions and basic estimates} \label{sec: defs}
We introduce a few definitions used throughout the paper.

\begin{definition}
    For two events $A,B\in \mathcal{F}$, we denote by $A\circ B$ the event that $A$ and $B$ occur disjointly. See \cite[Section 1.3]{HH} for the precise definition of disjoint occurrence. 
\end{definition}
    
    We will repeatedly use the van den Berg-Kesten inequality \cite{BK}: if $A$ and $B$ are increasing, then:
    \begin{equation}\label{eqn: BK}
    \mathbb{P}(A\circ B)\le \mathbb{P}(A)\mathbb{P}(B).
    \end{equation}

    In the rest of the paper, we refer to \eqref{eqn: BK} as ``the BK inequality.''

\begin{definition}
    Given a bond $f = (u,v)$, we notate
    \begin{equation}
        \underline{f}:= u, \quad \text{and} \quad 
        \overline{f}: = v,
    \end{equation}
    the near-side and far-side of $f$ respectively.
\end{definition}

\begin{definition} \label{def: Cf}
    For $x\in \mathbb{Z}^d$ we define the cluster of $x$, denoted $C(x)$ as the vertices of $\mathbb{Z}^d$ connected to $x$ by a path of open edges. For a bond $f$, we denote by $C^f(x)=C^f(x,\omega)$ the cluster of $x$ in the configuration where $\omega_f=0$. 
\end{definition}

\begin{definition}\label{def: local-CA}
    Let $A\in \mathbb{Z}^d$ be a set of vertices, and $x,y\in \mathbb{Z}^d$. We write $x\stackrel{A}{\leftrightarrow} y$ if $x$ and $y$ are connected by a path of open edges, all of whose edges have both endpoints in $A$. If $A=\mathbb{Z}^d$, we write $x\leftrightarrow y$. We let 
    \[C_A(x)=\{y: \,x\stackrel{A}{\leftrightarrow} y \}.\]
\end{definition}

\begin{definition}\label{def: piv}
We say the ordered bond $f=(u,v)$ is \emph{pivotal} for the connection from $x$ to $y$ if
$x\leftrightarrow u$, $y\leftrightarrow v$ and $y\notin C^{(u,v)}(x)$.
\end{definition}

\begin{definition}
    Let $x,y\in \mathbb{Z}^d$, and $f_1,\ldots,f_k$ pivotal bonds for the connection from $x$ to $y$. We write $f_1<\ldots<f_k$ and say $(f_1,\ldots,f_k)$ are ordered pivotals for the connection from $x$ to $y$ if, for each $1\le i\le k$, we have 
    \[f_j\in C^{f_i}(x), \quad 1\le j\le i-1,\] 
    and for each $1 \le i \le k-1$,
    \[\overline{f_i}\stackrel{\mathbb{Z}^d\setminus C^{f_i}(x)}{\leftrightarrow} \underline{f_{i+1}}.\]
\end{definition}

We rely extensively on the following result proved in our previous work.
\begin{theorem}
			\label{thm:IICexistences}
           Let $V_n, D_n$ be such that, for each $n$, $(V_n \cup D_n) \cap B(n) = \varnothing$ and such that the origin $0=(0,\ldots,0)$ and some vertex of $V_n$ are in the same connected component of $\mathbb{Z}^d \setminus D_n$. Then
			\[\lim_{n \to \infty} \mathbb{P}(A \mid 0 \sa{\mathbb{Z}^d \setminus D_n} V_n )  =: \nu(A)\]
            for every cylinder event $A$, where $\nu$ is the \emph{incipient infinite cluster} (IIC) measure based at the origin. In particular, the conditional measure $\mathbb{P}(\cdot \mid 0 \sa{\mathbb{Z}^d \setminus D_n} V_n )$ converges weakly to $\nu$.
			
\end{theorem}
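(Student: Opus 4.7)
The plan is to show that the conditional expectation of any cylinder event under $\mathbb{P}(\cdot \mid 0 \sa{\mathbb{Z}^d \setminus D_n} V_n)$ stabilizes as $n \to \infty$ to a limit that is independent of the choice of $V_n, D_n$ and depends only on $A$, which will define $\nu(A)$. Fix a cylinder event $A$ depending only on edges in a finite box $B = B(m)$ and take $n > m$. I would decompose the configuration by specifying the restriction of $\omega$ to edges with both endpoints in $B$, together with the collection $\mathcal{S}$ of open ``exit'' edges that have exactly one endpoint in $B$. Given this local data, the event $\{0 \sa{\mathbb{Z}^d \setminus D_n} V_n\}$ reduces to the event that some exit edge in $\mathcal{S}$ whose near endpoint lies in the $B$-cluster of $0$ has its far endpoint connected through the exterior of $B$ to $V_n$, avoiding $D_n$ and the $B$-cluster of $0$.

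The heart of the argument is a factorization estimate of the form
\[\mathbb{P}\bigl(0 \sa{\mathbb{Z}^d \setminus D_n} V_n \,\bigm|\, \omega \text{ in } B, \mathcal{S}\bigr) = (1 + o_n(1)) \, w(\omega, \mathcal{S}) \, \mathbb{P}\bigl(0 \sa{\mathbb{Z}^d \setminus D_n} V_n\bigr),\]
with the weight $w$ depending only on the local data and not on $V_n, D_n$. The key input is the sharp two-point function asymptotic $\tau(u,v) \sim c|v-u|^{-(d-2)}$ of \cite{HHS,FH}: for $u$ ranging over vertices of $B$ and $v$ forced to lie in $V_n \subset B(n)^c$, the ratio $\tau(u,v)/\tau(0,v)$ tends to $1$ uniformly as $n \to \infty$. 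Consequently the chance of completing a long connection starting from any particular exit is asymptotically the same as from the origin, so $w$ is essentially a count of admissible exits. Dividing the analogous decomposition of $\mathbb{P}(A \cap \{0 \sa{\mathbb{Z}^d \setminus D_n} V_n\})$ by that of $\mathbb{P}(0 \sa{\mathbb{Z}^d \setminus D_n} V_n)$ and passing to the limit yields $\nu(A)$, and Kolmogorov extension promotes this set function on cylinder events to a probability measure on $\Omega$.

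The main obstacle is establishing the factorization uniformly in $V_n$ and $D_n$. Three sources of correction must be controlled: the constraint that the long connection stays in $\mathbb{Z}^d \setminus D_n$, which is handled by a summed two-point function bound since $D_n \subset B(n)^c$ forces any offending detour to be anomalously long; the constraint that the long connection avoids the bounded set $C(0) \cap B$, handled by the BK inequality \eqref{eqn: BK} together with a finite-volume estimate; and the contribution of configurations where two different exits simultaneously carry macroscopic portions of the long connection, controlled by a triangle-diagram estimate from the lace expansion. These error terms vanish uniformly thanks to the strong form of \eqref{eqn: two-pt} and its error bounds, and combining them with the factorization gives the claimed weak convergence.
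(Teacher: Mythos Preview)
This theorem is not proved in the present paper at all: it is quoted from the authors' prior work \cite{CCHS} (see the sentence ``We rely extensively on the following result proved in our previous work'' immediately preceding the statement). So there is no proof here to compare your proposal against.

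That said, your sketch has real gaps relative to what such a proof must accomplish. The most serious is that you reduce everything to the point-to-point asymptotic $\tau(u,v)/\tau(0,v)\to 1$, but the conditioning event is a connection to an \emph{arbitrary set} $V_n$, avoiding an \emph{arbitrary set} $D_n$. If $V_n$ is, say, $\partial B(n)$, then $\mathbb{P}(0\leftrightarrow V_n)\asymp n^{-2}$ is governed by the one-arm exponent, not by a two-point function, and your ratio argument does not apply. The whole point of the ``robust'' construction in \cite{CCHS} is to obtain the limit uniformly over all such $V_n,D_n$; this requires substantially more than the two-point asymptotic and cannot be dispatched by the BK/triangle remarks you give. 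Relatedly, your displayed factorization cannot be right as written: the left side is a conditional probability bounded by $1$, while the right side is a weight times a (small) probability; and describing $w$ as ``essentially a count of admissible exits'' ignores that different exits can be correlated through the exterior configuration and that the correct limiting weight must encode the full local cluster geometry, not just a cardinality. Finally, your treatment of the avoidance of $D_n$ (``summed two-point function bound'') gives at best an upper bound on an error term, not the asymptotic equality you need for the factorization. These are exactly the difficulties that make the result of \cite{CCHS} nontrivial, and your outline does not engage with them.
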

As an immediate consequence of the above theorem, if $h$ depends only on the status of bonds in a finite region (and hence bounded), then
\begin{equation}\label{eqn: IIC-exp}
\lim_{n\rightarrow \infty} \mathbb{E}[h\mid 0\sa{\mathbb{Z}^d \setminus D_n} V_n]=\mathbb{E}_{\nu}[h].
\end{equation}
Under the IIC measure, the origin $0=(0,\ldots,0)$ is almost surely contained in an infinite cluster. When writing $\nu$ probabilities, we denote this cluster by $W$. So, for example, for $x\in \mathbb{Z}^d$, $\nu(x\in W)$ denotes the IIC probability that $x$ lies in the (infinite) cluster of the origin.

Below, we also encounter shifted versions of $\nu$. For a vertex $u\in \mathbb{Z}^d$, we denote by $\nu_u$ the IIC measure shifted by $u$:
\[\nu_u(A):=\nu(\tau_{-u}(A)),\]
where $\tau_v:\Omega\rightarrow\Omega$ shifts all variable indices by $v$:
\[\big(\tau_{v}(\omega)\big)_{\{x,y\}}=\omega_{\{x+v,y+v\}}.\]
We denote the cluster of the vertex $v$ under $\nu_v$ by $W_v$.
\\

Throughout, we denote by $|x|$ the Euclidean norm of a vector $x=(x_1,\ldots,x_d)\in \mathbb{R}^d$:
\[|x|=\sqrt{\sum_{i=1}^d x_i^2}.\]
We use the customary notation for the $\ell^\infty$ norm:
\[\|x\|_\infty= \max\{|x_i|: \, 1\le i \le d\}.\]
For a vertex $x\in \mathbb{Z}^d$, we denote by $B(x,r)$ the $\ell^\infty$ ball centered at the vertex:
\[B(x,r):=\{y\in\mathbb{Z}^d: \|x-y\|_\infty \le r\}.\]
A key estimate we use repeatedly is the one-arm asymptotic of Kozma and Nachmias \cite{KN}:
\begin{equation}\label{eqn: KN}
\mathbb{P}(0\leftrightarrow \partial B(0,R))\le CR^{-2},       
\end{equation}
for $R\in \mathbb{Z}_+$.

We also use the so-called ``Japanese bracket" notation:
\[\langle x\rangle:= (1+|x|^2)^{\frac{1}{2}}.\]
The point is that $|x|/\langle x\rangle\rightarrow 1$ at infinity but $\langle x\rangle$ does not vanish.

We denote the indicator of an event $A$ by $\mathbbm{1}_A$. For an integrable random variable $X$ and events $A_1,\ldots, A_k$, we use the notation
\[\mathbb{E}[X,A_1,\ldots, A_k]=\mathbb{E}\left[X\prod_{i=1}^k\mathbbm{1}_{A_i}\right].\]
We may switch back and forth between the two notations above as per what is typographically convenient.

This paper contains several asymptotic statements. We use the $o$ and $O$ notations, writing 
\[A=O_{P\rightarrow \infty}(B)\] 
if there is a constant $C$ independent of the asymptotic parameter $P$ such that $|A|\le CB$. We also write
\[A=o_{P\rightarrow\infty}(B)\]
if the limit of $B/A$ tends to zero as the parameter $P$ tends to its limit. If it is clear what the underlying parameter is, we may omit the subscript on $o$ or $O$. 

We conclude this subsection with a simple lemma that allows approximation of connectivity events by local analogues.
\begin{lemma}
    \label{lem:prelocalize}
    There is a $C>0$ such that, for all $K \geq 1$ and all $x \notin B(K^d)$, we have
\[\mathbb{P}\left(B(K) \cap C(0) \neq B(K) \cap C_{B(K^d)}(0), \, 0 \lra x \right) \leq C K^{-d} |x|^{2-d}. \]
In particular,
\[ \nu\left(\exists y \in W \cap B(K) \text{ which is not connected to $0$ by an open path in $B(K^d)$} \right)  \leq C K^{-d}. \]
\end{lemma}
\begin{proof}
    We first claim that if
    \[B(K) \cap C(0) \neq B(K) \cap C_{B(K^d)}(0) \text{ and } 0 \lra x,\]
    then there is a $z\in B(K)$ such that
    \begin{equation}\label{eqn: decouple-z} \{z\leftrightarrow \partial B(z,K^d/2)\}\circ \{0\leftrightarrow x\},
    \end{equation}
    or there is a $z\in B(K)$ such that
    \begin{equation}
         \label{eqn: decouple-z-2} \{0 \leftrightarrow \partial B(K^d/2)\}\circ \{z\leftrightarrow x\}.
    \end{equation}
    Indeed, let $z$ be an arbitrarily chosen vertex of 
    \[B(K) \cap [C(0) \setminus C_{B(K^d)}(0)]\ ,\]
    and suppose that \eqref{eqn: decouple-z} does not occur. 
    
    Let $\gamma$ be an open path realizing the connection from $0$ to $x$, and let $\eta$ be an open path from $z$ to its first intersection $w$ with $\gamma$. Then $\eta$ cannot exit $B(z, K^d/2)$ because otherwise it and $\gamma$ would be witnesses for the event in \eqref{eqn: decouple-z}. Then the initial portion of $\gamma$ from $0$ to $w$ must exit  $B(K^d/2)$, since otherwise this portion and $\eta$ would witness a connection from $0$ to $z$ lying entirely in $B(K^d)$.
    The path consisting of $\eta$ and the terminal segment of $\gamma$ from $w$ to $x$ realizes the second event in \eqref{eqn: decouple-z-2}, and  the initial segment of $\gamma$ clearly realizes the first event. 
    
    From this, we find
\begin{equation}
\begin{split}
&\mathbb{P}\left(B(K) \cap C(0) \neq B(K) \cap C_{B(K^d)}(0), \, 0 \lra x \right)\\
\le~&\sum_{z\in B(K)}(\mathbb{P}(z\leftrightarrow \partial B(z,K^d/2))\mathbb{P}(0 \lra x)+\mathbb{P}(0\lra \partial B( K^d/2))\mathbb{P}(z\leftrightarrow x))\\
\le~& CK^d K^{-2d}\tau(0,x)\\
\le~& CK^{-d}|x|^{2-d}.
\end{split}
\end{equation}
This proves the first part of the lemma.

For the second, we write
\begin{align*}
&\nu\left(\exists y \in W \cap B(K) \text{ which is not connected to $0$ by an open path in $B(K^d)$} \right)\\
= &\lim_{R \to \infty} \nu\left(\exists \text{ an open path in $W$ from $0$ to $B(K)$ which exits $B(K^d)$ but not $B(R)$}\right)\\
= &\lim_{R \to \infty} \lim_{|x| \to \infty}  \mathbb{P}\left(B(K) \cap C_{B(R)}(0) \neq B(K) \cap C_{B(K^d)}(0) \mid 0 \lra x\right)\\
\leq &\lim_{|x| \to \infty}  \mathbb{P}\left(B(K) \cap C(0) \neq B(K) \cap C_{B(K^d)}(0) \mid 0 \lra x\right)\\
\leq &C K^{-d}
\end{align*}
by the first part of the lemma.
\end{proof}

\subsection{Additive Quantities}\label{sec: additive}
The quantities whose asymptotic distribution we investigate here in addition to the number of pivotal edges, are the \emph{chemical distance} and \emph{effective resistance} from the origin to the vertex $n\mathbf{e}_1$.

\begin{definition} The \emph{chemical distance} $\mathrm{dist}(x,y)$ between vertices $x$ and $y$ is set to be infinite unless $x$ and $y$ are connected by a path of open edges. Otherwise, $\mathrm{dist}(x,y)$ is the minimal number of edges in $\gamma$, taken over all open paths $\gamma$ connecting $x$ to $y$.
\end{definition}

\begin{definition} Given two vertices $x, y$, the effective resistance $R_{\mathrm{eff}}(x,y)$ between $x$ and $y$ is  set to be $R_{\mathrm{eff}}(x, y) = \infty$ if they are not connected by an open path. 

Otherwise, we let $\Theta$ be the class of antisymmetric functions $\theta$ on the ordered edges of $\mathbb{Z}^d$ satisfying $\theta(\vec{e}) = 0$ when $\omega_e = 0$, where $e$ is the unordered edge corresponding to $\vec{e}$. We then write
\[R_{\mathrm{eff}}(x,y) = \inf_{\substack{\theta \in \Theta:\\ \theta \text{ a unit flow } x \to y}} \frac{1}{2}\sum_{\vec{e}} \theta(\vec{e})^2\ . \]
Here, we say that $\theta$ is a unit flow from $x$ to $y$ if, for any $z\in \mathbb{Z}^d$,
\[\sum_{q} \theta((z, q)) = \mathbf{1}_{z = x} - \mathbf{1}_{z = y}\ . \]
See \cite{LP} for more information.
\end{definition}

The key property of the chemical distance and effective resistance that we use is the following additivity:
\begin{equation} \label{eqn: series-law}
\begin{gathered}
    \text{if $(w, z)$ is an open pivotal bond for the open connection from $x$ to $y$, then}\\
    \delta(x, y) = 1 + \delta(x, w) +\delta(y, z),\
\end{gathered}
\end{equation}
where $\delta$ denotes $\mathrm{dist}$ or $R_{\mathrm{eff}}$.

\subsubsection{Bubbles}\label{sec: bubble-def}
Let $f=(\underline{f},\overline{f})$ be an ordered edge. Suppose $f$ is pivotal for the connection between the origin $0$ and $n\mathbf{e}_1$ in the sense of Definition \ref{def: piv}. Recall from Definition \ref{def: Cf} that $C^f(0)$ represents the sites connected to $0$ without using the edge $f$. The \emph{bubble} of $\overline{f}$, denoted $bubble(\overline{f})$, is the set of edges with two edge-disjoint connections to $\overline{f}$ off $C^f(0)$ (i.e. in $\mathbb{Z}^d\setminus C^f(0)$). 

Denote
\[\mathcal{S}(\overline{f},C^f(0))=\{x\in \mathbb{Z}^d: x\leftrightarrow n\mathbf{e}_1 \text{ off } bubble(\overline{f})\}.\]
 
The quantity $D(bubble(\overline{f}))$ is defined as the distance from $\overline{f}$ to the set $\mathcal{S}(\overline{f},C^f(0))$:
\[D(bubble(\overline{f})):=\mathrm{dist}\big(\overline{f},\mathcal{S}(\overline{f},C^f(0))\big).\]
That is, the minimal number of edges in any open path from $\overline{f}$ to $\mathcal{S}(\overline{f},C^f(0))$.

\begin{proposition}\label{prop: head}
    Suppose $\overline{f}$ does not have two edge-disjoint connections to $n\mathbf{e}_1$. Then, there is a unique vertex $z\in \mathcal{S}(\overline{f},C^f(0))$ that is an endpoint of an edge in $bubble(\overline{f})$. We call $z$ the \emph{head} of the bubble $bubble(\overline{f})$ and denote it by $head(bubble(\overline{f}))$. 

\begin{proof}
    If $\overline{f}$ does not have two edge-disjoint connections to $n\mathbf{e}_1$, then $n\mathbf{e}_1$ is not an endpoint of an edge in $bubble(\overline{f})$. Choosing an open path $\gamma$ from $\overline{f}$ to $n\mathbf{e}_1$, the last vertex of $\gamma$ that is an endpoint of an edge in $bubble(\overline{f})$ belongs to $\mathcal{S}(\overline{f},C^f(0))$.
    
    Suppose there are two distinct vertices $z$, $z'$ as in the proposition. $z$ has two disjoint connections to $\overline{f}$, one of which does not contain $z'$. Similarly, $z'$ has a connection to $\overline{f}$ which does not contain $z$. These connections lie in $bubble(\overline{f})$. Since $z$ and $z'$ are distinct, they have connections $\gamma$ and $\gamma'$ off $bubble(\overline{f})$ to $n\mathbf{e}_1$ which do not coincide. By considering the first vertex along $\gamma$ which belongs to $\gamma'$, we find that the edges of $\gamma$ must in fact belong to $bubble(\overline{f})$, a contradiction, so $z=z'$.
\end{proof}
\end{proposition}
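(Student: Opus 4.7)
The plan for existence is to use the open connection from $\overline{f}$ to $n\mathbf{e}_1$ that the pivotality of $f$ guarantees. Picking any such open path $\gamma$ lying in $\mathbb{Z}^d \setminus C^f(0)$ and walking along it, one may traverse edges of $bubble(\overline{f})$ early on, but since $\overline{f}$ does not admit two edge-disjoint connections to $n\mathbf{e}_1$, the terminal point $n\mathbf{e}_1$ itself is not incident to a bubble edge. Hence the set of vertices along $\gamma$ incident to a bubble edge is nonempty and finite; take $z$ to be the last such vertex. The tail of $\gamma$ after $z$ is an open path to $n\mathbf{e}_1$ off $bubble(\overline{f})$, so $z \in S_\infty(\overline{f}, C^f(0))$.

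For uniqueness, I argue by contradiction: suppose $z \neq z'$ are two distinct heads. The strategy is to splice paths together to produce two edge-disjoint open connections $\overline{f} \to n\mathbf{e}_1$, violating the hypothesis. Each of $z, z'$ is incident to a bubble edge and therefore admits two edge-disjoint open connections to $\overline{f}$ through the bubble; in particular one can choose a bubble path from $\overline{f}$ to $z$ avoiding $z'$ and one from $\overline{f}$ to $z'$ avoiding $z$. Meanwhile, membership in $S_\infty$ supplies off-bubble open paths $\gamma$ from $z$ and $\gamma'$ from $z'$ to $n\mathbf{e}_1$. If $\gamma$ and $\gamma'$ happened to be edge-disjoint, concatenating each with the corresponding bubble path would already deliver the forbidden pair. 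Otherwise, let $w$ be the first vertex of $\gamma$ (starting from $z$) that lies on $\gamma'$. Any edge $e$ of $\gamma$ between $z$ and $w$ would then itself admit two edge-disjoint connections to $\overline{f}$ off $C^f(0)$: one going backward along $\gamma$ to $z$ and then through the bubble, the other going forward along $\gamma$ to $w$, backward along $\gamma'$ to $z'$, and then through the bubble. This forces $e \in bubble(\overline{f})$, contradicting that $\gamma$ lies off the bubble.

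The delicate point is the bookkeeping in the uniqueness step: one must verify that the two bubble paths invoked are themselves edge-disjoint from each other and from the relevant segments of $\gamma$ and $\gamma'$, so that the spliced connections really are edge-disjoint. This should follow from a Menger-style choice on the graph $\mathbb{Z}^d \setminus C^f(0)$, leveraging the definitional guarantee that each bubble edge witnesses two edge-disjoint connections to $\overline{f}$ there, together with the ability to reroute around either $z$ or $z'$ as needed.
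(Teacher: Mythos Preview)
Your proposal is correct and follows essentially the same approach as the paper's proof: the existence argument via the last bubble-incident vertex on an open path to $n\mathbf{e}_1$ is identical, and the uniqueness argument via the first meeting point $w$ of the two off-bubble paths $\gamma,\gamma'$ to force an off-bubble edge into the bubble is exactly the paper's argument, just with the case split and the edge-disjointness bookkeeping made more explicit. The ``delicate point'' you flag about the two bubble paths possibly overlapping is glossed over in the paper as well, and your Menger-style justification is the right way to close it.
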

If $\overline{f}$ is doubly connected to $n\mathbf{e}_1$, we set $head(bubble(\overline{f}))=n\mathbf{e}_1$.
Note that 
\begin{equation}\label{eqn: dist-to-head}
D(bubble(\overline{f}))=\mathrm{dist}(\{\overline{f},head(bubble(\overline{f}))\}).
\end{equation}
We also define
\[R_{\mathrm{eff}}(bubble(\overline{f})):=R_{\mathrm{eff}}\big(\overline{f},head(bubble(\overline{f}))\big).\]

\subsubsection{Additivity}
 Let $\delta(0,n\mathbf{e}_1)$ denote $D_n=\mathrm{dist}(0,n\mathbf{e}_1)$ or $R_n=R_{\mathrm{eff}}(0,n\mathbf{e}_1)$. 
 Assuming that $0$ is connected but not doubly connected to $n\mathbf{e}_1$, we let $\{\underline{e_i},\overline{e_i}\})^q_{i=1}$ be the sequence of pivotal edges for a connection from $0$ to $n \mathbf{e}_1$. 
    Let the quantity $\delta(e_i)$ denote the chemical distance or the electrical resistance across the bubble of $\overline{e_i}$:
    \[\delta(e_i)=D(bubble(\overline{e_i})) \text{ or } R_{\mathrm{eff}}(bubble(\overline{e_i})).\]
    Then, if there are $q$ pivotal edges, we have by \eqref{eqn: series-law}
    \begin{equation}
       \label{eq:deltasum}
        \delta(0, n\mathbf{e}_1) = \delta(0, \underline{e_1}) + q + \sum_{i=1}^{q-1} \delta(e_i)\ = \sum_{i=0}^q h(e_i),
    \end{equation}
    where $h(e_i)=1+\delta(e_i)$. We compute the asymptotic distribution of $\delta(0,n\mathbf{e}_1)$ using the method of moments, introducing a number of approximations along the way.

\subsection{Local variables} \label{sec: local-v}
The quantities $h(e_i)$ appearing in the sum \eqref{eq:deltasum} a priori depend on edges at arbitrary distance from the base pivotal edge $e_i$. However, this happens only if $bubble(\overline{f})$ is large, an unlikely event. We approximate $h(e_i)$ by local variables suitable for our moment computations.

We associate to each bond of $\mathbb{Z}^d$ and set $V\subset \mathbb{Z}^d$, a random variable with certain properties that encapsulate this locality. Let $L \geq 1$ be fixed. \\
\begin{definition}\label{def: local-var}
    Let $(g(e,V))$, where $e$ ranges over $\overline{\mathcal{E}}(\mathbb{Z}^d)$ and $V$ over subsets of $\mathbb{Z}^d$, be a collection of random variables. Suppose that $g$ has the following properties uniformly on its domain: \\
    
    (i) Denote by $C^V(\overline{e})$ the cluster of vertices connected to $\overline{e}$ by open edges with no endpoint in $V$. We assume that, for each lattice animal $\mathcal{C}$
    \[g(e,V)\mathbbm{1}_{C^V(\overline{e})=\mathcal{C}}\]
    is measurable with respect to the edges in $\mathcal{C}$.
    
    (ii) For any $e$ and $V$,
    \begin{equation} 
    g(e,V) \in \sigma(B(e,L)),
    \label{rv-cyl}
    \end{equation}
    and, if $V\cap B(\overline{e},L)=V'\cap B(\overline{e},L)$, then
    \begin{equation}\label{eqn: V-locality}
    g(e,V)=g(e,V').
    \end{equation}
    
    (iii) there is a constant $C$ such that \begin{equation}
    g(e,V) \leq CL^d \  a.s.
    \label{rv-bdd}
    \end{equation}

    (iv) For any $\epsilon>0$, for large enough $n$, for any $k$-tuple of edges $(f_1,\dots,f_k)$ such that
    \[\min_{i=1,\dots,k+1} \|\underline{f_i} - \overline{f_{i-1}}\|_\infty \geq \epsilon n,\] 
    and for any sets $V_i$ such that $f_i\notin V_i$, we have that
    \begin{equation}
        \prod_{i=1}^{k} g(f_i,V_i),
        \label{rv-indep}
    \end{equation}
    is independent of $\sigma(f_1,\dots,f_k)$. \\

    (v) Translation invariance: for any $f$,
    \begin{equation}
        \mathbb{E}_{\nu_{\overline{f}}}[g(f,V)] = \mathbb{E}_{\nu}[g((\underline{f} - \overline{f}, 0),\tau_{-\overline{f}}V)],
        \label{rv-trans}
    \end{equation}
    where $\nu_{\bar e}$ represents the IIC measure based at the endpoint $v$ of the ordered edge $e=(u,v)$. 
    
    Then we call $g$ a family of \textbf{local variables}. 
\end{definition}

\subsubsection*{Truncated Quantities}
Our main examples of local variables are truncated versions of the chemical distance and effective resistance across the bubble at one end of a pivotal edge. 

For a set of \emph{edges} $A\subset \mathcal{E}(\mathbb{Z}^d)$, and two sets $B,C\subset \mathbb{Z}^d$, we define the chemical distance between $B$ and $C$ as the minimal number of edges in any lattice path consisting of open edges of $A$ joining a vertex in $B$ to a vertex in $C$. We denote this by $\mathrm{dist}_A(B,C)$.

Let $f$ be an ordered edge of $\mathbb{Z}^d$ and $V\subset \mathbb{Z}^d$ be a lattice animal with $\underline{f}\in V$. Let $C^V(\overline{f})$ be the cluster of $\overline{f}$ in $\mathbb{Z}^d\setminus V$: the vertices that can be reached by a path of edges both of whose endpoints lie outside of $V$. The $V$-\emph{bubble} of $\overline{f}$ denotes the edges in $C^V(\overline{f})$ doubly connected to $\overline{f}$ in $\mathbb{Z}^d\setminus V$. We denote this set by $bubble_V(\overline{f})$. Note that if $V\subset U$, then $bubble_U(\overline{f})\subset bubble_V(\overline{f})$.

Finally, we define 
\begin{equation}\label{eqn: SLVf}
S_L(f,V)=\{x\in C^V(\overline{f}): x \leftrightarrow \partial B(\overline{f},L)\text{ off } bubble_V(\overline{f})\}.
\end{equation}
We also define
\begin{equation}\label{eqn: SinftyV}
S_\infty(f,V):=\cap_{L\ge 1} S_L(f,V).
\end{equation}

\begin{definition}
The distance across the bubble of $\overline{f}$, truncated at distance $L$ is defined as
\begin{equation} \label{eqn: L-bubble}
g_{d,L}(f,V):= \mathrm{dist}_{bubble_V(\overline{f})}\big(\overline{f}, S_L(f,V)\big)\mathbbm{1}_{\mathrm{diam}(bubble_V(\overline{f}))\le L/2}.
\end{equation}

The truncated effective resistance accross the $V$-bubble of $\overline{f}$ is defined by
\begin{equation}\label{eqn: L-resistance}
    g_{r,L}(f,V):=R_{\mathrm{eff}}\big(\overline{f}, S_L(f,V)\big)\mathbbm{1}_{\mathrm{diam}(bubble_V(\overline{f}))\leq L/2}.
\end{equation}
\end{definition}
These approximations converge to the corresponding untruncated quantities as the parameter $L$ tends to $\infty$.

\subsection{Limit Theorem for local variables}\label{sec: local-limit}
Here, we state our main result for sums over local variables, after introducing a few quantities that appear in the statement.
First, let $\bm{c}$ denote the van der Hofstad-Hara-Slade two-point asymptotic constant \cite{HHS}:
\begin{equation}\label{eqn: two-pt}
\bm{c}:=\lim_{n\rightarrow\infty} n^{d-2}\mathbb{P}(0\leftrightarrow n\mathbf{e}_1).
\end{equation}
The existence of the limit is due to Hara in the case of nearest-neighbor percolation \cite{Hara}. For many purposes, the scaling of the two-point function, an immediate consequence of \eqref{eqn: two-pt} suffices:
\begin{equation}\label{eqn: HS-scaling}\tau(x,y):=\mathbb{P}(x\leftrightarrow y)\asymp |x-y|^{-d+2}.
\end{equation}

For a collection of local variables $g$, we define:
\begin{equation} 
    \alpha_g:= \mathbb{E}_{\nu_{\mathbf{e}_1}} \otimes \mathbb{E}_{\widetilde{\nu_{0}}}\left[\mathbbm{1}_{\exists\,\text{a path to}\,\infty\,\text{in}\,W_{\mathbf{e}_1}\,\text{off}\,\widetilde{W}_{0}} \cdot g\big((0,\mathbf{e}_1),\widetilde{W}_0\big)\right].
    \label{iic-constant-def}
\end{equation}
The expectation is with respect to the product measure $\mathbb{E}_{\nu_{\mathbf{e}_1}} \otimes \mathbb{E}_{\widetilde{\nu_{0}}}$ of the two IIC measures $\nu_{\mathbf{e}_1}$ and $\nu_0$. Recall that we denote the cluster of a vertex $v$ under $\nu_v$ by $W_v$. The notation $\tilde{\nu}_0$ is used to indicate that $\widetilde{W}_0$ is the IIC corresponding to this factor of the product measure.
For notational convenience, we set
\[\alpha_{d,L}:=\alpha_{g_{d,L}},\quad \alpha_{r,L}:=\alpha_{g_{r,L}}, \]
where $g_{d,L}$ and $g_{r,L}$ are the truncated distance \eqref{eqn: L-bubble} and truncated resistance \eqref{eqn: L-resistance}. These quantities will have limits as $L \rightarrow \infty$, which will define $\alpha_d$ and $\alpha_r$ respectively, as discussed in Proposition \ref{prop: alphaL}. We also define
\[ \alpha_p:= \mathbb{E}_{\nu_{\mathbf{e}_1}} \otimes \mathbb{E}_{\widetilde{\nu_{0}}}\left[\mathbbm{1}_{\exists\,\text{a path to}\,\infty\,\text{in}\,W_{\mathbf{e}_1}\,\text{off}\,\widetilde{W}_{0}}\right], \]
which corresponds to the case when $g$ is identically $1$. Throughout, we may simply write $\alpha$ for the generic $\alpha_g$ when the context is clear. \\

Next, we set
\begin{equation}\label{eqn: beta-def}
    \beta:= \frac{p_c}{1-p_c}.
\end{equation}
For each integer $k\ge 1$ and $M>0$, we define the quantities
\begin{equation}
\label{eqn: IM}
I_k(M) := \int_{([-M,M]^d)^k} \prod_{i=0}^{k} |x_{i+1}-x_i|^{2-d}\,d^{kd} x,
\end{equation}
where $x_0 = (0, \ldots,0)$, $x_{k+1} = \mathbf{e}_1$, and $x_i$ for $i = 1,\dots,k$ are vectors in $\mathbb{R}^d$. Note that the integral is absolutely convergent. We also encounter the following integrals:
\begin{equation}
I_k(\epsilon,M) := \int_{([-M,M]^d)^k \setminus (B(0,\epsilon)\cup B(\mathbf{e}_1;\epsilon))} \mathbbm{1}_{\min_{i=1,\dots,k-1} |x_{i+1}-x_i| \geq \epsilon} \prod_{i=1}^{k} |x_{i+1}-x_i|^{2-d}\,d^{kd} x.
    \label{farregime-int}
\end{equation}
By the Dominated Convergence Theorem, we have
\begin{equation}\label{eqn: DCT-app}
\lim_{\epsilon\rightarrow 0} I_k(\epsilon,M)=I_k(M).
\end{equation}

Let $f_1,\ldots, f_k$ be a collection of edges in $\mathbb{Z}^d$. Let $\overline{f_0}:= 0=(0,\ldots,0)$ and $\underline{f_{k+1}}:=n\mathbf{e}_1$, and define for all $0 \leq i \leq m-1$, 
\begin{equation}
    \zeta_i:= \bigcup_{j=0}^{i}C(\overline{f_j}),
\end{equation}
and the restricted union of clusters
\begin{equation}
    \zeta^*_i:= \bigcup_{j=0}^{i} C^{f_{j+1}}(\overline{f_j}),
\end{equation}
with $\zeta_{-1} = \zeta^*_{-1} = \varnothing$.
\begin{figure}[htbp]  
  \centering
  \includegraphics[width=0.55\textwidth]{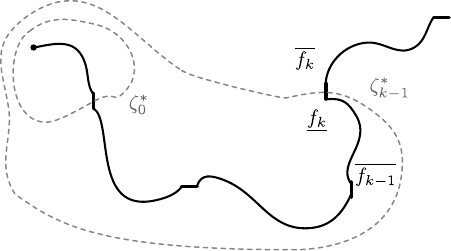}
  \caption{An illustration of the cluster $\zeta_{k-1}^*$}
  \label{fig:zeta}
\end{figure}

The following is our main limit theorem for local variables.
\begin{theorem}\label{thm: local-limit}
Let $g$ be a family of local variables. For all $M \in (2,\infty)$,
\begin{equation}
\begin{split}
    &\lim_{n \rightarrow \infty} \frac{\mathbb{E}\left[\sum_{(f_1,\dots,f_k) \in B(Mn)^k} \prod_{i=1}^{k} g(f_i,\zeta^*_{i-1}), (f_1,\dots,f_k)\,\text{ordered open pivotals for }\,0\leftrightarrow n\mathbf{e}_1\right]}{n^{2(k+1)-d}}\\
    =~& \bm{c}^{k+1} \cdot (\alpha_g \beta)^k \cdot (2d)^k  \cdot I_k(M),
    \end{split}
    \label{mainthm-eq}
\end{equation}
where $I_k(M)$ is defined in \eqref{eqn: IM}. The extra factor of $\mathbf{c}$ in \eqref{mainthm-eq} disappears under the natural conditioning on the event $\{0\leftrightarrow n\mathbf{e}_1\}$.
\label{mainthm}
\end{theorem}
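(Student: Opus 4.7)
The strategy is to induct on $k$, reducing a $k$-pivotal moment to a $(k-1)$-pivotal one by peeling off the innermost pivotal and invoking the IIC convergence of Theorem \ref{thm:IICexistences} at that pivotal. The three conceptual steps are: (a) an algebraic expansion of the pivotal event into a product over arms, (b) a reduction to a \emph{far regime} in which consecutive pivotal endpoints are separated by a macroscopic distance $\gg L$, and (c) an iterative application of IIC convergence to factor the sum into a product of two-point functions and local IIC expectations. I would expect Lemmas \ref{rec1}--\ref{rec3} to encode precisely one step of this induction.

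First, I would expand the pivotal indicator. By Definition \ref{def: piv} and the edge-disjointness of the sausages, the event ``$(f_1,\ldots,f_k)$ are ordered open pivotals for $0\leftrightarrow n\mathbf{e}_1$'' factors into: each $f_i$ is open, $0\leftrightarrow \underline{f_1}$, and $\overline{f_i}\leftrightarrow \underline{f_{i+1}}$ off $\zeta^*_{i-1}$ for $1\le i\le k$ (with $\underline{f_{k+1}}:=n\mathbf{e}_1$). Conditioning sequentially on the sausage clusters $C^{f_1}(0), C^{f_2}(\overline{f_1}),\ldots$ and using independence of the remaining bonds, the expectation in \eqref{mainthm-eq} becomes a nested sum of products of conditional two-point functions $\mathbb{P}(\cdot \leftrightarrow \cdot \text{ off } \zeta^*_{i-1})$ weighted by $p_c^k\prod_i g(f_i,\zeta^*_{i-1})$. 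I would then restrict to the far regime $\min_i \|\underline{f_i}-\overline{f_{i-1}}\|_\infty \geq \epsilon n$. Using BK and the two-point scaling \eqref{eqn: HS-scaling}, every short arm of length $\leq \epsilon n$ contributes $O((\epsilon n)^2)$ instead of $O(n^2)$, so the near-regime sum is $O(\epsilon^2)$ times the main term; letting $\epsilon \to 0$ matches $I_k(\epsilon,M)\to I_k(M)$ from \eqref{eqn: DCT-app}.

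Third, in the far regime I would apply IIC convergence pivotal by pivotal. The locality properties (i)--(v) of Definition \ref{def: local-var} ensure that $g(f_i,\zeta^*_{i-1})$ depends only on the open configuration in $B(\overline{f_i},L)$ and on $\zeta^*_{i-1}\cap B(\overline{f_i},L)$, which in the far regime is empty with probability $1-o(1)$ as $n\to\infty$. The two long-range open connections entering $\underline{f_i}$ and leaving $\overline{f_i}$ each play the role of the conditioning $0\sa{\mathbb{Z}^d\setminus D_n} V_n$ of Theorem \ref{thm:IICexistences}, one per side, so the joint law of the configuration in a finite neighborhood of $f_i$ converges to the product $\nu_{\underline{f_i}}\otimes\widetilde\nu_{\overline{f_i}}$. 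Taking the expectation of the open-edge indicator times the disjointness indicator times $g(f_i,\zeta^*_{i-1})$ yields, in the limit, the constant $\alpha_g$ of \eqref{iic-constant-def}; the factor $\beta=p_c/(1-p_c)$ arises from the bookkeeping between the pivotal weight $p_c$ and the $1-p_c$ normalization hidden in writing the ``off $\zeta^*_{i-1}$'' connection as an IIC-type arm. Replacing each arm probability by its leading asymptotic $\mathbf{c}|x_i-x_{i-1}|^{2-d}n^{2-d}$ from \eqref{eqn: two-pt} and recognizing the resulting Riemann sum over the rescaled positions $x_i:=\overline{f_i}/n\in[-M,M]^d$ produces exactly $\mathbf{c}^{k+1}(\alpha_g\beta)^k(2d)^k I_k(M)$, the factor $(2d)^k$ accounting for the $2d$ orientations of each pivotal edge.

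The main obstacle will be uniform error control across the $k$ steps of the recursion. Theorem \ref{thm:IICexistences} gives IIC convergence for fixed cylinder events but requires the random conditioning set $\zeta^*_{i-1}$ to avoid $B(\overline{f_i},L)$ and the conditioning clusters to realize long connections; the exceptional events must be bounded by BK together with the one-arm estimate \eqref{eqn: KN}, and the resulting errors must be shown to vanish once the limits $n\to\infty$, $\epsilon\to 0$, $L\to\infty$ are taken in that order. Propagating this error bookkeeping through $k$ successive IIC approximations, while simultaneously exploiting the uniform bound \eqref{rv-bdd} on the local variables, is what forces the three separate inductive statements of Lemmas \ref{rec1}, \ref{rec2}, \ref{rec3}.
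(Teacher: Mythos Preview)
Your overall strategy---splitting into near and far regimes, controlling the near regime by BK and a convolution bound, and in the far regime applying IIC convergence pivotal by pivotal to produce a Riemann sum---is exactly the paper's approach. The identification of $\beta$ via the open/closed bookkeeping, of $(2d)^k$ via edge orientations, and of $I_k(M)$ as a Riemann sum is correct.

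There is, however, a genuine gap in your third step. You assert that $\zeta^*_{i-1}\cap B(\overline{f_i},L)$ is ``empty with probability $1-o(1)$'' in the far regime, and later that the IIC convergence ``requires the random conditioning set $\zeta^*_{i-1}$ to avoid $B(\overline{f_i},L)$''. This is false: on the pivotal event one has $\underline{f_i}\in C^{f_i}(\overline{f_{i-1}})\subset \zeta^*_{i-1}$, and $\underline{f_i}\in B(\overline{f_i},1)\subset B(\overline{f_i},L)$, so the intersection is \emph{never} empty. The local variable $g(f_i,\zeta^*_{i-1})$ depends nontrivially on the shape of $\zeta^*_{i-1}$ near $f_i$; this is precisely how the ``off $\widetilde W_0$'' constraint in the definition \eqref{iic-constant-def} of $\alpha_g$ arises. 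What \emph{is} true (and is established in the paper as the error estimates $E_3$, $E_4$ and Proposition \ref{prop: only-W}) is that only the last component $C^{f_i}(\overline{f_{i-1}})$ of $\zeta^*_{i-1}$ contributes near $f_i$; the earlier components miss $B(\overline{f_i},K)$ with high probability.

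The mechanism your sketch elides is the following. One partitions over the finite set $\mathcal D_i:=\zeta_{i-1}\cap B(\overline{f_i},K)$ and then \emph{unfolds} the expectation into a product of two independent percolation measures: one carries $\zeta_{i-1}$, whose trace near $\underline{f_i}$ converges under the long-arm conditioning to the IIC $\widetilde W_{\underline{f_i}}$; the other carries the connection $\overline{f_i}\leftrightarrow\underline{f_{i+1}}$ off $\mathcal D_i$, which converges to the IIC $W_{\overline{f_i}}$. The recursion must then carry along the extra constraint $\mathcal L(\mathcal D_i)=\{\zeta_{i-1}\cap B(\overline{f_i},K)=\mathcal D_i\}$ to the next step, which is why Lemmas \ref{rec1}, \ref{rec2}, \ref{rec3} have three distinct forms rather than a single inductive step. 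Your conclusion that the local law near $f_i$ converges to a product of two IIC measures is the correct endpoint, but it is reached through this partition-and-unfold argument, not through the (false) emptiness of $\zeta^*_{i-1}\cap B(\overline{f_i},L)$.
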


\section{Moment estimates}\label{sec: moments}
Here we prove Theorem \ref{thm: local-limit}. Throughout this section, we let $k \geq 1$ be fixed, and let $(f_1,\dots,f_k)$ denote an ordered $k$-tuple of bonds, with the convention that $\overline{f_0}:= 0$, and $\underline{f_{k+1}}:= n\mathbf{e}_1$. Let $M > 2$ be fixed.

\begin{definition}
    The set of far-regime edges $F(\epsilon, M)$ is defined by
    \begin{equation}
    F(\epsilon,M):=\{(f_1,\dots,f_k) \in B(Mn)^k: \min_{i=1,\dots,k+1} |\underline{f_i} - \overline{f_{i-1}}| \geq \epsilon n\}.
    \label{farregime-def}
    \end{equation}
\end{definition}

\begin{definition}
    The near-regime $N(\epsilon,M)$ is defined by
    \begin{equation}
    N(\epsilon,M):= B(Mn)^k \setminus F(\epsilon,M).
    \label{nearregime-def}
\end{equation}
\end{definition}

\begin{proposition}[Near-regime estimate]
    Let $g$ be a family of local variables. There exists a constant $C$ so for all $M \in (2,\infty)$ and all $\epsilon$ sufficiently small, 
    \begin{equation}
        \limsup_{n \rightarrow \infty} n^{d-2(k+1)} \sum_{N(\epsilon,M)} \mathbb{E}\left[\prod_{i=1}^{k} g(f_i,\zeta^*_{i-1}), (f_1,\dots,f_k)\,\text{ordered open pivotals for }\, 0 \leftrightarrow n\mathbf{e}_1\right] \leq C \cdot L^{dk} \cdot \epsilon^2.
        \label{nearregime-eq1}
    \end{equation}
    \label{nearregime-thm}
\end{proposition}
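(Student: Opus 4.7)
The plan is: (i) use the uniform boundedness of $g$ to extract the factor $L^{dk}$; (ii) bound the probability that a tuple forms ordered open pivotals using BK and the two-point asymptotic; and (iii) show that the resulting deterministic sum over $N(\epsilon,M)$, after rescaling, is $O(\epsilon^2)$. Step (i) is immediate from Definition~\ref{def: local-var}(iii): $\prod_{i=1}^{k} g(f_i,\zeta^*_{i-1}) \leq C^{k} L^{dk}$ pointwise, so it suffices to establish
\[
\limsup_{n\to\infty}\, n^{d-2(k+1)} \sum_{N(\epsilon,M)} \mathbb{P}\bigl((f_1,\dots,f_k)\text{ ordered open pivotals for }0\leftrightarrow n\mathbf{e}_1\bigr) \leq C \epsilon^2.
\]

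For (ii), the event of being ordered open pivotals decomposes as the disjoint occurrence
\[
\{0\leftrightarrow\underline{f_1}\}\circ\{f_1\text{ open}\}\circ\{\overline{f_1}\leftrightarrow\underline{f_2}\}\circ\cdots\circ\{f_k\text{ open}\}\circ\{\overline{f_k}\leftrightarrow n\mathbf{e}_1\},
\]
so BK \eqref{eqn: BK} combined with the two-point upper bound from \eqref{eqn: HS-scaling} (and absorbing $p_c^k$ into constants) gives
\[
\mathbb{P}\bigl((f_1,\dots,f_k)\text{ ordered open pivotals}\bigr) \leq C^{k+1} \prod_{i=0}^{k} \bigl\langle\underline{f_{i+1}}-\overline{f_i}\bigr\rangle^{-(d-2)},
\]
with the conventions $\overline{f_0}=0$ and $\underline{f_{k+1}}=n\mathbf{e}_1$.

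For (iii), decompose $N(\epsilon,M)=\bigcup_{i=1}^{k+1} N_i$ with $N_i=\{|\underline{f_i}-\overline{f_{i-1}}|<\epsilon n\}$ and apply a union bound. Fix $i\in\{2,\dots,k\}$ (the interior case) and sum out the near variable using the gluing estimate
\[
\sum_{|v|<\epsilon n}\langle v\rangle^{2-d}\langle w-v\rangle^{2-d} \leq C(\epsilon n)^2\langle w\rangle^{2-d} \qquad (|w|\geq 2\epsilon n),
\]
which follows from $\langle w-v\rangle\asymp\langle w\rangle$ on the ball of integration together with $\sum_{|v|<r}\langle v\rangle^{2-d}\asymp r^2$. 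In the opposite regime $|w|<2\epsilon n$, the classical convolution bound $\sum_v\langle v\rangle^{2-d}\langle w-v\rangle^{2-d}\leq C\langle w\rangle^{4-d}$, valid for $d>4$ and hence certainly for $d\geq 11$, provides an even smaller contribution. After the $v$-sum, the remaining lattice sum has $k-1$ free vertices linked by $k$ two-point factors; divided by the appropriate power of $n$, it converges to the finite integral $I_{k-1}(M)$. Tracking powers, $(d-2(k+1))+2+(2k-d)=0$, so the contribution of each such $N_i$ is bounded by $C L^{dk}\epsilon^2 I_{k-1}(M)$. The boundary cases $i\in\{1,k+1\}$, where one of the close-pair vertices is fixed at $0$ or $n\mathbf{e}_1$, are handled identically using the simpler estimate $\sum_{|v|<\epsilon n}\langle v\rangle^{2-d}\leq C(\epsilon n)^2$.

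The main obstacle is the uniformity of the Riemann-sum approximation in $\epsilon$: contributions from regions where two or more consecutive pairs are simultaneously close must be controlled, but these are of higher order (at least $O(\epsilon^4)$) thanks to the convolution bound above. The dimension condition $d\geq 11$, and in particular $d>4$, is what makes the gluing/convolution step succeed, preventing the near-diagonal singularities of the two-point function from overwhelming the ball-volume gain.
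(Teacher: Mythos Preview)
Your steps (i) and (ii) match the paper exactly. The difference lies entirely in step (iii), where you and the paper bound the deterministic lattice sum $\sum_{N(\epsilon,M)}\prod_{i=0}^k\langle\underline{f_{i+1}}-\overline{f_i}\rangle^{2-d}$ by different routes.

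The paper isolates this bound as a standalone lemma (Lemma~\ref{convolution-bound}) and proves it by induction on $k$: at each step one splits on whether the next gap $|x_2-x_1|$ is also below $2\epsilon\|y\|_\infty$, handling the two cases with a single convolution and the $(k-1)$-case hypothesis. This yields a bound $C^k\epsilon^2\langle y\rangle^{2(k+1)-d}$ with $C$ independent of the box size. Your argument instead takes a union bound over which index $i$ witnesses the near condition, sums that single variable out via the gluing estimate $\sum_{|v|<\epsilon n}\langle v\rangle^{2-d}\langle w-v\rangle^{2-d}\le C(\epsilon n)^2\langle w\rangle^{2-d}$ (which, as you correctly note, holds for \emph{all} $w$ once one combines the far case $|w|\ge 2\epsilon n$ with the convolution bound $\langle w\rangle^{4-d}\le C(\epsilon n)^2\langle w\rangle^{2-d}$ on the near case), and then bounds the residual $(k-1)$-variable sum by $I_{k-1}(M)\,n^{2k-d}$. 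This is a genuinely more direct argument, avoiding the induction entirely; the price is that your constant carries an $I_{k-1}(M)$ factor and hence depends on $M$, whereas the paper's inductive constant is formally $M$-free. For the application to Theorem~\ref{mainthm} (where $M$ is fixed and one sends $n\to\infty$ then $\epsilon\to 0$) this makes no difference.

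Your closing remark about ``uniformity in $\epsilon$'' and multiple simultaneous near-pairs is unnecessary: the union bound over $N_i$ already overcounts those configurations harmlessly, so nothing further is needed there.
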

The proof of this result depends on the following lemma, proved in Section \ref{sec: convolution}.
\begin{lemma}[Convolution bound]
There exist a $C > 0$ such that, for each $k \geq 1$ and each $1 \leq i \leq k+1$, we have
\begin{equation*}
\begin{gathered}
 \sum_{\substack{x_1, \dots, x_k \in B(2 \|y\|_\infty) \\ |x_i - x_{i-1}| < \epsilon \|y\|_{\infty}}} \langle x_1\rangle^{2-d} \langle x_2 - x_1\rangle^{2-d} \dots \langle x_k - x_{k-1}\rangle^{2-d} \langle y - x_k\rangle^{2-d} \leq C^k  \epsilon^2\langle y\rangle ^{2(k+1) - d},
\end{gathered}
\end{equation*}
for all $i$ and $y$, where $x_0 = 0$, and $x_{k+1} = y$.  
\label{convolution-bound}
\end{lemma}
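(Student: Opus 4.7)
My plan is to isolate the short increment $w := x_i - x_{i-1}$ (whose size is less than $\epsilon\|y\|_\infty$ by hypothesis), extract an $\epsilon^2$ factor by summing the kernel $K(z):=\langle z\rangle^{2-d}$ over this small ball, and bound the remaining sum by the standard iterated convolution estimate for $K$.

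\textbf{Step 1: iterated convolution bound.} First I would establish, by induction on $m\ge 1$, that
\[
\sum_{x_1,\ldots,x_m\in B(C\|y\|_\infty)} K(x_1)\,K(x_2-x_1)\cdots K(v-x_m) \;\le\; C_0^{\,m}\,\langle v\rangle^{\,2m+2-d}
\qquad (v\in B(2\|y\|_\infty)).
\]
The base case is the classical one-step estimate $\sum_{z\in\mathbb{Z}^d}\langle z\rangle^{-a}\langle v-z\rangle^{-b}\le C\langle v\rangle^{d-a-b}$ valid for $0<a,b<d$ with $a+b>d$; applied with $a=b=d-2$ this gives $\sum_z K(z)K(v-z)\lesssim \langle v\rangle^{4-d}$ (using $2d-4>d$, valid since $d\ge 11$). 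The inductive step applies the same one-step bound as long as the running exponent $2m-d$ stays negative. Once $2m-d\ge 0$, I would instead use the crude pointwise estimate $\langle z\rangle^{\,2m-d}\le C\langle y\rangle^{\,2m-d}$ on the ball together with the elementary sum $\sum_{z\in B(C\|y\|_\infty)}\langle v-z\rangle^{2-d}\lesssim \langle y\rangle^2$, producing the same $\langle y\rangle^{2}$ increment in the exponent.

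\textbf{Step 2: isolate the short gap.} Fixing $i\in\{1,\ldots,k+1\}$ and setting $w=x_i-x_{i-1}$, I split the target sum as
\[
\sum_{|w|<\epsilon\|y\|_\infty} K(w)\cdot \Sigma_i(w),
\]
where $\Sigma_i(w)$ denotes the sum over the remaining $k-1$ free variables with $x_i$ replaced by $x_{i-1}+w$. After the translation $x_j\mapsto x_j-w$ for $j\ge i+1$, the sum $\Sigma_i(w)$ becomes a pure $k$-fold convolution of $K$ against itself, evaluated at $y-w$, over a ball of comparable size. Step~1 then yields $\Sigma_i(w)\le C^k\,\langle y-w\rangle^{\,2k+2-d}$. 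The endpoint cases $i=1$ and $i=k+1$ are handled identically with $w=x_1$ or $w=y-x_k$; no shift is required in these cases.

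\textbf{Step 3: combine.} Since $|w|\le\epsilon\|y\|_\infty\le\|y\|_\infty/2$ for $\epsilon$ small, one has $\langle y-w\rangle\asymp \langle y\rangle$, whence $\Sigma_i(w)\le C^k\langle y\rangle^{\,2k+2-d}$. For the outer sum,
\[
\sum_{|w|<\epsilon\|y\|_\infty}\langle w\rangle^{\,2-d}\le C\,(\epsilon\|y\|_\infty)^2\le C\epsilon^2\langle y\rangle^{2},
\]
by comparison with the radial integral $\int_0^{\epsilon\|y\|}r\,dr$. Multiplying the two bounds gives exactly $C^k\epsilon^2\langle y\rangle^{\,2(k+1)-d}$, as claimed. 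The main obstacle is Step~1: the iterated convolution estimate must be proved with a multiplicative constant of the form $C_0^m$ rather than anything super-exponential in $m$, and the induction must handle seamlessly the crossover where the exponent $2m+2-d$ changes sign (this crossover is relevant because $k$ is unrestricted in the statement, even though ultimately only finitely many moments persist in the limiting theorem).
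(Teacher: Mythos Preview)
Your approach is correct and a bit cleaner than the paper's. Both arguments extract the $\epsilon^2$ factor from the single short increment and reduce to a $(k-1)$-variable convolution, but the mechanisms differ. The paper inducts directly on the lemma: assuming $i=1$ by commutativity, it splits according to whether the \emph{next} gap $|x_2-x_1|$ is also short. When it is not, one has $\langle x_2-x_1\rangle^{2-d}\le C\langle x_2\rangle^{2-d}$, the sum over $x_1$ produces $\epsilon^2\langle y\rangle^{2}$, and the $\epsilon=2$ case of the inductive hypothesis handles the rest; when it is, the $k=1$ case collapses the two short gaps and one again falls back on the inductive hypothesis. Your translation $x_j\mapsto x_j-w$ for $j>i$ sidesteps this dichotomy entirely, landing directly on the unconstrained $(k-1)$-variable convolution at the shifted endpoint $y-w\asymp y$, which is exactly what the paper's ``$\epsilon=2$'' case provides.

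Two small corrections. First, after isolating $w$ you have $k-1$ free variables and $k$ kernel factors, so Step~1 with $m=k-1$ yields $\Sigma_i(w)\le C_0^{\,k-1}\langle y-w\rangle^{2k-d}$, not $\langle y-w\rangle^{2k+2-d}$; your Step~3 arithmetic is then consistent. Second, your Step~1 bound $C_0^{m}\langle v\rangle^{2m+2-d}$ cannot hold uniformly for all $v\in B(2\|y\|_\infty)$ across the crossover (at $2m+2-d=0$ a logarithm appears for small $v$). The fix is painless: either state Step~1 only for $v$ with $\langle v\rangle\asymp\langle y\rangle$ (which is all you use), or---more elegantly---absorb Step~1 into the induction by invoking the lemma itself at the shifted point $y-w$ with $\epsilon=2$, exactly as the paper does. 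That makes the whole proof a one-line induction with no separate crossover analysis.
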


\begin{proof}[Proof of Proposition \ref{nearregime-thm} given Lemma \ref{convolution-bound}]
    Bounding all $g$ factors using \eqref{rv-bdd}, and then applying the van den Berg-Kesten inequality \eqref{eqn: BK} and the two-point upper bound \eqref{eqn: HS-scaling}, the sum in \eqref{nearregime-eq1} is bounded above by
    \begin{equation}
        C \cdot L^{dk}\sum_{N(\epsilon,M)}  \prod_{i=1}^{k+1} |\underline{f_{i}}-\overline{f_{i-1}}|^{2-d}.
        \label{nearregime-eq2}
    \end{equation}
    We use the convolution bound Lemma \ref{convolution-bound} to bound the sum over the $f_i$ in \eqref{nearregime-eq2} by
    \begin{equation}
        C \cdot L^{dk} \cdot \epsilon^2 \cdot n^{2(k+1)-d}.
    \end{equation}
    Divide by $n^{2(k+1)-d}$ and let $n \rightarrow \infty$ to conclude. 
\end{proof}
Next, we state the result for the far-regime. 
\begin{proposition}[Far-regime estimate]
Let $g$ be a family of local variables. For all $M \in (2,\infty)$,
\begin{equation}
\begin{split}
    &\lim_{n \rightarrow \infty} n^{d-2(k+1)} \sum_{F(\epsilon,M)} \mathbb{E}\left[\prod_{i=1}^{k} g(f_i,\zeta_{i-1}^* ),(f_1,\dots,f_k)\,\text{ordered open pivotals for}\,~ 0 \leftrightarrow n\mathbf{e}_1\right]\\
    =~&\bm{c}^{k+1} \cdot (\alpha\beta)^k  \cdot (2d)^k \cdot I_k(\epsilon,M).
\end{split}
    \label{farregime-thm-eq}
\end{equation}
where  $I_k(\epsilon,M)$ is defined in \eqref{farregime-int}.
\label{farregime-thm}
\end{proposition}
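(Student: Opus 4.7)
The plan is to induct on the number $k$ of pivotals, peeling them off from left to right via three recursion steps (initial, generic, terminal), each of which produces an identical factor $\bm{c}\,(\alpha_g \beta)(2d)\,|\underline{f_i} - \overline{f_{i-1}}|^{2-d}\,(1+o(1))$ at pivotal $f_i$, together with a final factor $\bm{c}\,|n\mathbf{e}_1 - \overline{f_k}|^{2-d}$ for the connection $\overline{f_k} \to n\mathbf{e}_1$. After the substitution $\underline{f_i} = n x_i$, the lattice sum over $F(\epsilon, M)$ becomes a Riemann sum for $I_k(\epsilon, M)$, and the total power of $n$ is $n^{kd + (k+1)(2-d)} = n^{2(k+1)-d}$, matching the normalization in \eqref{farregime-thm-eq}.

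The core step is the local decoupling at a single pivotal $f_i$. One would condition on the past cluster $\zeta^*_{i-1}$ and on the status of $f_i$: the open-edge contribution is $p_c$, and by the two-point asymptotic \eqref{eqn: two-pt} the probability of an open connection $\overline{f_{i-1}} \to \underline{f_i}$ disjoint from the past is $\bm{c}\,|\underline{f_i} - \overline{f_{i-1}}|^{2-d}\,(1+o(1))$. What remains is (i) the local variable $g(f_i, \zeta^*_{i-1})$, which by \eqref{rv-cyl} and \eqref{eqn: V-locality} only sees $B(\overline{f_i}, L)$, and (ii) the constraint that $\overline{f_i}$ launches a forward connection to the next pivotal, off $\zeta^*_{i-1}$, with $f_i$ remaining pivotal. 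Under the far-regime separation, $C^{f_i}(\underline{f_i})$ near $\underline{f_i}$ looks, after shifting, like an independent IIC cluster $\widetilde W_0$, while the forward piece from $\overline{f_i}$ looks like an independent IIC cluster $W_{\mathbf{e}_1}$. Applying Theorem \ref{thm:IICexistences} to both arms, the constraint ``forward arm avoids backward cluster and reaches to infinity'' converges to the indicator in the definition \eqref{iic-constant-def} of $\alpha_g$, so the local expectation tends to $\alpha_g$. The factor $\beta = p_c/(1-p_c)$ records the open-edge weight of $f_i$ relative to the reweighting under which the two arms are genuinely independent in the product IIC measure, and the factor $2d$ counts the orientations of $f_i$ exiting its near endpoint.

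The main obstacle will be making this decoupling quantitative and uniform in $n$. Two issues must be controlled. First, one needs the probability that $\zeta^*_{i-1}$ reaches within distance $L$ of $f_i$ to be negligible; this follows from the far-regime separation $\min |\underline{f_i} - \overline{f_{i-1}}| \geq \epsilon n$ together with the one-arm bound \eqref{eqn: KN}, since each previously-grown cluster has polynomial tails on its radius and sits at distance $\geq \epsilon n$ from $f_i$. Second, to obtain the product IIC form of \eqref{iic-constant-def}, one must apply Theorem \ref{thm:IICexistences} simultaneously to the forward and backward pieces at $f_i$: the natural way is to arrange the induction so that at step $i$ the past $\zeta^*_{i-1}$ is held fixed and IIC convergence is applied to the forward growth, while the backward IIC factor at $\underline{f_i}$ is produced when $f_i$ is absorbed into the history at step $i+1$. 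Error terms from truncation of $g$ to its $L$-neighborhood, from the IIC approximation, and from replacing $|\underline{f_i} - \overline{f_{i-1}}|^{2-d}$ by the discrete two-point function should all contribute $o(1)$ at fixed $\epsilon, M, L$, producing \eqref{farregime-thm-eq} in the limit.
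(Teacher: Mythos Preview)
Your proposal is correct and follows essentially the same strategy as the paper. The paper first extracts the factor $\beta^k$ by switching open pivotals to closed (Lemma~\ref{open-closed-lem}), then proves the pointwise asymptotic
\[
\mathbb{E}[X_k,\Lambda_k]=(1+o_n(1))\,\alpha_g^k\prod_{j=0}^{k}\bm{c}\,|\underline{f_{j+1}}-\overline{f_j}|^{2-d}
\]
via a recursion (Lemmas~\ref{rec1}--\ref{rec3}, assembled in Proposition~\ref{rec4}), and finishes by recognizing the sum over $F(\epsilon,M)$ as a Riemann sum, with the $(2d)^k$ arising exactly as you say from the $2d$ ordered edges at each vertex.

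Two small discrepancies are worth flagging. First, the paper peels pivotals from \emph{right to left} (treating the last connection $\overline{f_k}\leftrightarrow n\mathbf{e}_1$ first), not left to right; this is immaterial by symmetry, and your description of how the two IIC factors at a pivotal appear in consecutive recursion steps mirrors the paper's mechanism exactly. Second, the error ``past cluster $\zeta_{i-1}^*$ reaches within $K$ of $f_i$'' is not controlled via the one-arm bound \eqref{eqn: KN} as you suggest, but rather via BK and the (open) triangle condition \eqref{eqn: Delta}--\eqref{eqn: open-T}; see the estimate for $E_1$ in Proposition~\ref{prop: E1}. The one-arm bound alone does not give smallness here, since one must control the \emph{joint} event that $\zeta_{i-1}^*$ reaches a vertex $y$ near $f_i$ while the forward connection also passes through $y$, which naturally produces a triangle diagram.
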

The proof of this proposition appears in Section \ref{sec: far-regime-proof} below, following a sequence of intermediate results that contain the main computation in this paper.

\subsubsection{Proof of Theorem \ref{mainthm}}
Putting together the near- and far-regime estimates in Propositions \ref{nearregime-thm} and \ref{farregime-thm}, we  obtain the main result.
\begin{proof}[Proof of Theorem \ref{mainthm}]
Apply Proposition \ref{nearregime-thm} and Proposition \ref{farregime-thm} to 
\begin{equation}
    \frac{\mathbb{E}\left[\sum_{(f_1,\dots,f_k)\in B(Mn)^k} \prod_{i=1}^{k} g(f_i,\zeta^*_{i-1}),(f_1,\dots,f_k)\,\text{ordered open pivotals for}\, 0\leftrightarrow n\mathbf{e}_1\right]}{n^{2(k+1)-d}},
\end{equation}
to obtain the lower bound
\begin{equation}
    (1+o_{n\rightarrow\infty}(1))\cdot\bm{c}^{k+1} \cdot  (\alpha\beta)^k \cdot (2d)^k \cdot I_k(\epsilon,M),
\end{equation}
and upper bound
\begin{equation}
    C \cdot L^{kd} \cdot \epsilon^2 + (1+o_{n\rightarrow\infty}(1))\cdot\bm{c}^{k+1} \cdot  (\alpha\beta)^k \cdot (2d)^k\cdot  I_k(\epsilon,M).
\end{equation}
Let $n \rightarrow \infty$ and $\epsilon \rightarrow 0$ to conclude using \eqref{eqn: DCT-app}.
\end{proof}

\subsection{Far-regime}
In this section, we introduce the ingredients for the proof of Proposition \ref{farregime-thm}. We assume that the condition \eqref{farregime-def} holds throughout.
\subsubsection{Recursive setup for the far-regime}
For some fixed family of local variables $g$ and $(f_1,\dots,f_m) \in F(\epsilon,M)$, we seek to describe the limiting behavior of
\begin{equation}
    \mathbb{E}\left[\prod_{i=1}^{m} g(f_i,\zeta_{i-1}^*),(f_1,\dots,f_{m})\,\text{ordered open pivotals for}\,0\leftrightarrow n\mathbf{e}_1\right].
    \label{qty}
\end{equation}
Now that we are only concerned with a fixed $(f_1,\dots,f_m)$, we abbreviate:
\begin{equation}
\begin{split}
X_j&:=\prod_{i=1}^{j} g(f_i, \zeta_{i-1}),\\
    X_j^*&:=\prod_{i=1}^{j} g(f_i, \zeta_{i-1}^*).
\end{split}
\end{equation}

\begin{lemma}
    For any ordered $m$-tuple $(f_1,\dots,f_m)$, it holds that \eqref{qty} equals
    \begin{equation}
        \beta^m \cdot \mathbb{E}\left[X_m, \bigcap_{i=0}^{m} \left\{\overline{f_i} \leftrightarrow \underline{f_{i+1}}\,\text{off}\,\zeta_{i-1}\right\}\right],  
        \label{open-closed}
    \end{equation}
    where we have $\overline{f_0}:= 0$, $C(\underline{f_0}):=\varnothing$, and $\underline{f_{m+1}}:=n\mathbf{e}_1$.
    \label{open-closed-lem}
\end{lemma}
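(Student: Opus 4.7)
My approach is to re-express the ``ordered open pivotals'' event as the conjunction of the openness of $f_1,\dots,f_m$ with a connection event $E^+$ measurable with respect to the remaining edges, and then to exchange the statuses of these $m$ edges from open to closed. This exchange costs a factor $(p_c/(1-p_c))^m=\beta^m$ by Bernoulli bookkeeping.

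\textbf{Step 1: Sausage decomposition.} Let $\Pi$ denote the event that $(f_1,\dots,f_m)$ are ordered open pivotals for $0\leftrightarrow n\mathbf{e}_1$. Working from the definition of ordered pivotals, I would show
\[
\Pi=\Big(\bigcap_{i=1}^m\{\omega_{f_i}=1\}\Big)\cap E^+,\qquad E^+:=\bigcap_{i=0}^{m}\{\overline{f_i}\leftrightarrow\underline{f_{i+1}}\text{ off }\zeta^*_{i-1}\},
\]
with the convention $\overline{f_0}=0$, $\underline{f_{m+1}}=n\mathbf{e}_1$, $\zeta^*_{-1}=\varnothing$. The content is the identification $C^{f_i}(0)=\zeta^*_{i-1}$ on $\Pi$: the $\supseteq$ direction chains the connections $0\leftrightarrow\underline{f_1}$, $\overline{f_1}\leftrightarrow\underline{f_2},\dots,\overline{f_{i-1}}\leftrightarrow\underline{f_i}$ off the preceding clusters, while $\subseteq$ uses the pivotality of the $f_k$'s for $k<i$ to preclude any bypass. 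Because each $C^{f_{k+1}}(\overline{f_k})$ already ignores $f_{k+1}$ and the off-$\zeta^*$ paths never traverse an $f_j$, the event $E^+$ is measurable with respect to edges outside $\{f_1,\dots,f_m\}$.

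\textbf{Step 2: Bernoulli exchange.} By property (i) of Definition \ref{def: local-var}, the variable $g(f_i,\zeta^*_{i-1})$ is determined by edges in $C^{\zeta^*_{i-1}}(\overline{f_i})$, i.e.\ by edges with both endpoints off $\zeta^*_{i-1}$. Since $\underline{f_i}\in\zeta^*_{i-1}$, the edge $f_i$ itself is excluded; applying the same reasoning to every factor shows that $X_m^*$ is independent of $\omega_{f_1},\dots,\omega_{f_m}$. Integrating out these $m$ Bernoulli variables therefore gives
\[
\mathbb{E}[X_m^*\,\mathbbm{1}_\Pi]=p_c^m\,\mathbb{E}[X_m^*\,\mathbbm{1}_{E^+}]=\beta^m\cdot\mathbb{E}\Big[X_m^*\,\mathbbm{1}_{E^+}\prod_{i=1}^m\mathbbm{1}_{\omega_{f_i}=0}\Big].
\]
On the event that all $\omega_{f_i}=0$, we have $C^{f_{k+1}}(\overline{f_k})=C(\overline{f_k})$ for each $k$, hence $\zeta^*_{i-1}=\zeta_{i-1}$ pointwise, so $X_m^*=X_m$ and $E^+=\bigcap_{i=0}^m\{\overline{f_i}\leftrightarrow\underline{f_{i+1}}\text{ off }\zeta_{i-1}\}$. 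Finally, $E^+$ already forces each $f_i$ to be closed (otherwise $\overline{f_i}\in C(\underline{f_i})\subseteq\zeta_{i-1}$ contradicts the off-$\zeta_{i-1}$ path from $\overline{f_i}$), so the closed-edge indicators are redundant and the identity follows.

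The main obstacle I anticipate is Step 1: the combinatorial identification $C^{f_i}(0)=\zeta^*_{i-1}$ on $\Pi$. This is a standard sausage-decomposition statement, but it requires careful verification of both inclusions, relying on the ordering of the pivotals and on the uniqueness of the ``spine'' threaded through them. Once this identification is in hand, the remainder is routine Bernoulli resampling together with the structural locality of $g$ guaranteed by property (i) of Definition \ref{def: local-var}.
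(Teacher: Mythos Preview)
Your overall strategy---rewrite the ordered-pivotal event as openness of the $f_i$ together with a connection event, then swap open for closed at cost $\beta^m$, then identify $\zeta^*=\zeta$ and $X_m^*=X_m$ on the closed event---matches the paper's. The final paragraph of your Step~2 (redundancy of the closed-edge indicators) is also fine.

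The gap is your justification of the independence of $X_m^*$ from $\omega_{f_1},\dots,\omega_{f_m}$. Property~(i) tells you that $g(f_i,\zeta^*_{i-1})$ is determined by edges with both endpoints off $\zeta^*_{i-1}$, and since $\underline{f_i}\in\zeta^*_{i-1}$ this rules out $\omega_{f_i}$. But it does \emph{not} rule out $\omega_{f_j}$ for $j>i$: those edges may lie entirely off $\zeta^*_{i-1}$, and moreover the random set $\zeta^*_{i-1}=\bigcup_{k\le i-1}C^{f_{k+1}}(\overline{f_k})$ itself can depend on $\omega_{f_j}$ for $j\neq k+1$. So ``each factor ignores its own edge'' does not give ``the product ignores all edges''---think of the toy example $A_1=\omega_{f_2}$, $A_2=\omega_{f_1}$. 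The same objection applies to your claim in Step~1 that $E^+$ is measurable in the complement of $\{f_1,\dots,f_m\}$.

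The paper sidesteps this by invoking property~(iv) of Definition~\ref{def: local-var}, which directly asserts (in the far regime $F(\epsilon,M)$ where this lemma is actually applied) that $\prod_i g(f_i,V_i)$ is independent of $\sigma(f_1,\dots,f_m)$. It also rewrites the connection events as ``off $\{f_1,\dots,f_i\}\cup\zeta^*_{i-1}$'' before exchanging open for closed, making explicit that the $i$-th connection ignores the first $i$ edges. Replacing your appeal to~(i) by~(iv) and noting that the lemma is stated in the far-regime context closes the gap.
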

\begin{proof}
    By definition, \eqref{qty} equals
    \begin{equation}
        \mathbb{E}\left[X_m^*,\bigcap_{i=0}^{m} \left\{f_1,\dots,f_m\,\text{open}, \overline{f_i} \leftrightarrow \underline{f_{i+1}}\,\text{off} \,\zeta^*_{i-1}\right\}\right].
    \end{equation}
    This in turn equals
    \begin{equation}
        \mathbb{E}\left[X_m^*,\bigcap_{i=0}^{m} \left\{f_1,\dots,f_i\,\text{open}, \overline{f_i} \leftrightarrow \underline{f_{i+1}}\,\text{off}\,\{f_1,\dots,f_i\}\cup\zeta^*_{i-1}\right\}\right].
    \end{equation}
    Since the restricted connection events are independent of $\sigma(f_1,\dots,f_m)$, and so is $X_m$, by the comment below \eqref{rv-indep}, we pull out the constant cost to obtain
    \begin{equation}
        \left(\frac{p_c}{1-p_c}\right)^m \cdot \mathbb{E}\left[X_m^*, \bigcap_{i=0}^{m} \left\{f_1,\dots,f_m\,\text{closed}, \overline{f_i} \leftrightarrow \underline{f_{i+1}}\,\text{off}\,\{f_1,\dots,f_i\}\cup\zeta^*_{i-1}\right\}\right].
    \end{equation}
    The above event equals
    \begin{equation}
        \beta^m  \cdot \mathbb{E}\left[X_m, \bigcap_{i=0}^{m} \left\{\overline{f_i} \leftrightarrow \underline{f_{i+1}}\,\text{off}\,\zeta_{i-1}\right\}\right],
    \end{equation}
as desired.
\end{proof}
Now, define, for any $1 \leq k \leq m$,
\begin{equation}
    \Lambda_k:=\bigcap_{i=0}^{k} \left\{\overline{f_i} \leftrightarrow \underline{f_{i+1}}\,\text{off}\,\zeta_{i-1}\right\},
\end{equation}
Let $K>0$ and define, for a disjoint union of lattice animals $\underline{f_k}\in \mathcal{D}_k$, 
\begin{equation}
    \mathcal{L}(\mathcal{D}_k) = \left\{\zeta_{k-1}\cap B(\overline{f_k},K)=\mathcal{D}_k\right\}.
\end{equation}
There are three steps to the recursive argument we use. First,
\begin{lemma}\label{rec1}
    For $k \geq 1$,
    \begin{equation}\label{eqn: rec1}
    \begin{split}
        &n^{(d-2)(k+1)}\cdot \mathbb{E}\left[X_k,\Lambda_{k}\right]\\
    =~&\frac{\tau(\overline{f_{k}},\underline{f_{k+1}})}{n^{2-d}}
        \cdot \sum_{\mathcal{D}_k} \mathbb{E}_{\nu_{\overline{f_k}}}\left[g(f_k,\mathcal{D}_k),\exists\,\text{a path to}\,\infty\,\text{in}\,W_{\overline{f_k}}\,\text{off}\,\mathcal{D}_k\right]n^{(d-2)k}\mathbb{E}\left[X_{k-1}, \Lambda_{k-1},\mathcal{L}(\mathcal{D}_{k})\right]\\
        &+\varepsilon_1(n,K).
    \end{split}
    \end{equation}
    Here, the error $\varepsilon_1$ satisfies
    \[\lim_{K\rightarrow\infty}\limsup_{n\rightarrow\infty} \varepsilon_1(n,K)=0,\]
    uniformly in the choice of $k$ edges $f_1,\ldots,f_k\in F(\epsilon,M)$ for fixed $M$.
\end{lemma}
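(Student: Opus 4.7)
The plan is to peel off the contribution of the last pivotal $f_k$ and the final long connection $\overline{f_k}\leftrightarrow\underline{f_{k+1}}$, extracting an IIC expectation with the local obstacle $\mathcal{D}_k$ together with a factor of the two-point function $\tau(\overline{f_k},\underline{f_{k+1}})$. The scaling $n^{d-2}\tau\to O(1)$ absorbs one power of $n^{d-2}$ into that last factor, leaving $n^{(d-2)k}$ to sit with $\mathbb{E}[X_{k-1},\Lambda_{k-1},\mathcal{L}(\mathcal{D}_k)]$ as set up for the next recursion step.

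First I would partition by the local structure $\mathcal{D}_k=\zeta_{k-1}\cap B(\overline{f_k},K)$, writing $\mathbb{E}[X_k,\Lambda_k] = \sum_{\mathcal{D}_k} \mathbb{E}[X_k,\Lambda_k,\mathcal{L}(\mathcal{D}_k)]$. On $\mathcal{L}(\mathcal{D}_k)$, taking $K\geq L$, the $V$-locality property \eqref{eqn: V-locality} gives $g(f_k,\zeta_{k-1})=g(f_k,\mathcal{D}_k)$, hence $X_k=X_{k-1}\cdot g(f_k,\mathcal{D}_k)$. Writing $\Lambda_k=\Lambda_{k-1}\cap\{\overline{f_k}\leftrightarrow\underline{f_{k+1}}\text{ off }\zeta_{k-1}\}$, note that $X_{k-1}$, $\mathbbm{1}_{\Lambda_{k-1}}$ and $\mathbbm{1}_{\mathcal{L}(\mathcal{D}_k)}$ are all measurable with respect to edges incident to $\zeta_{k-1}$ (which determines the vertex set of $\zeta_{k-1}$), while $g(f_k,\mathcal{D}_k)\mathbbm{1}_{\overline{f_k}\leftrightarrow\underline{f_{k+1}}\text{ off }\zeta_{k-1}}$ depends only on edges with no endpoint in $\zeta_{k-1}$. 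Conditioning on $\zeta_{k-1}$ therefore separates the expectation into a product of $X_{k-1}\mathbbm{1}_{\Lambda_{k-1}\cap\mathcal{L}(\mathcal{D}_k)}$ and an inner factor $\Phi_n(\zeta_{k-1},\mathcal{D}_k):=\mathbb{E}[g(f_k,\mathcal{D}_k)\mathbbm{1}_{\overline{f_k}\leftrightarrow\underline{f_{k+1}}\text{ off }\zeta_{k-1}}]$ that depends on $\zeta_{k-1}$ only through its vertex set.

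Next I would remove the far part of $\zeta_{k-1}$ from $\Phi_n$, replacing the obstacle $\zeta_{k-1}$ by $\mathcal{D}_k$. The discrepancy requires that every open path from $\overline{f_k}$ to $\underline{f_{k+1}}$ pass through $\zeta_{k-1}\setminus B(\overline{f_k},K)$, forcing two disjoint open arms from $\overline{f_k}$, one to $\partial B(\overline{f_k},K)$ and one to $\underline{f_{k+1}}$. By BK \eqref{eqn: BK}, the one-arm bound \eqref{eqn: KN}, and the two-point scaling \eqref{eqn: HS-scaling}, the induced error is $O(K^{-2}\tau(\overline{f_k},\underline{f_{k+1}}))$, uniformly in everything. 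After this step the inner factor no longer depends on $\zeta_{k-1}$, so it pulls out of the sum:
\begin{equation*}
\mathbb{E}[X_k,\Lambda_k,\mathcal{L}(\mathcal{D}_k)] = \mathbb{E}\bigl[g(f_k,\mathcal{D}_k)\mathbbm{1}_{\overline{f_k}\leftrightarrow\underline{f_{k+1}}\text{ off }\mathcal{D}_k}\bigr]\cdot\mathbb{E}[X_{k-1},\Lambda_{k-1},\mathcal{L}(\mathcal{D}_k)] + (\text{BK error}).
\end{equation*}
Now I apply Theorem \ref{thm:IICexistences} centered at $\overline{f_k}$ with $V_n=\{\underline{f_{k+1}}\}$, which is at $\ell^\infty$ distance at least $\epsilon n$: for any cylinder $A$, $\mathbb{P}(A\mid\overline{f_k}\leftrightarrow\underline{f_{k+1}})\to\nu_{\overline{f_k}}(A)$. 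Choosing $A$ to fix the value of $g(f_k,\mathcal{D}_k)$ and to require an open path from $\overline{f_k}$ to $\partial B(\overline{f_k},M_0)$ off $\mathcal{D}_k$, then sending $n\to\infty$ first and $M_0\to\infty$ next (which exhausts the non-cylinder event ``path to infinity off $\mathcal{D}_k$'' by monotone convergence under $\nu_{\overline{f_k}}$), together with $\mathbb{P}(\overline{f_k}\leftrightarrow\underline{f_{k+1}})\asymp\tau(\overline{f_k},\underline{f_{k+1}})$ from \eqref{eqn: HS-scaling}, yields
\begin{equation*}
\mathbb{E}\bigl[g(f_k,\mathcal{D}_k)\mathbbm{1}_{\overline{f_k}\leftrightarrow\underline{f_{k+1}}\text{ off }\mathcal{D}_k}\bigr] = \tau(\overline{f_k},\underline{f_{k+1}})\cdot\mathbb{E}_{\nu_{\overline{f_k}}}\bigl[g(f_k,\mathcal{D}_k),\exists\text{ path to }\infty\text{ in }W_{\overline{f_k}}\text{ off }\mathcal{D}_k\bigr] + o(\tau),
\end{equation*}
uniformly in the pivotals. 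Multiplying the identity by $n^{(d-2)(k+1)}$ and regrouping $n^{d-2}$ with $\tau$ produces the claimed form \eqref{eqn: rec1}, with $\varepsilon_1(n,K)$ collecting both the $K^{-2}\tau$ BK error and the vanishing IIC and cylinder-exhaustion errors.

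The main obstacle is legitimizing the IIC step in the presence of the \emph{local} obstacle $\mathcal{D}_k$: Theorem \ref{thm:IICexistences} requires $D_n\cap B(n)=\varnothing$, whereas $\mathcal{D}_k$ sits inside a fixed ball around $\overline{f_k}$. The resolution is to keep the conditioning in the theorem as the plain long-range connection (no local obstacle) and to fold the ``off $\mathcal{D}_k$'' requirement into the cylinder event $A$; this is legitimate because the presence of an open path from $\overline{f_k}$ to $\partial B(\overline{f_k},M_0)$ off $\mathcal{D}_k$ is a cylinder event, and sending $M_0\to\infty$ recovers the ``path to infinity off $\mathcal{D}_k$'' event under $\nu_{\overline{f_k}}$. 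Secondary care is needed for the uniformity of errors over $(f_1,\ldots,f_k)\in[-nM,nM]^d$, which follows from the sharpness of \eqref{eqn: HS-scaling}, and for the ordering of limits $n\to\infty$ before $M_0\to\infty$ and $K\to\infty$.
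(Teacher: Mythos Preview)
Your overall architecture matches the paper's: partition by the local obstacle $\mathcal{D}_k$, factor by conditioning on $\zeta_{k-1}$, truncate the obstacle from $\zeta_{k-1}$ to $\mathcal{D}_k$, then condition on $\{\overline{f_k}\leftrightarrow\underline{f_{k+1}}\}$ and pass to the IIC via a cylinder approximation. The IIC step with the auxiliary radius $M_0$ is handled correctly and is exactly the paper's $2^R$ mechanism.

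The gap is in your truncation error. You assert that on the discrepancy event---$\overline{f_k}\leftrightarrow\underline{f_{k+1}}$ off $\mathcal{D}_k$ but not off $\zeta_{k-1}$---one obtains \emph{two disjoint} arms from $\overline{f_k}$, one to $\partial B(\overline{f_k},K)$ and one to $\underline{f_{k+1}}$, yielding $O(K^{-2}\tau)$ via BK and the one-arm bound. This inference is false: the event only guarantees a \emph{single} open path $\gamma$ from $\overline{f_k}$ to $\underline{f_{k+1}}$ that visits some $y\in\zeta_{k-1}\setminus B(\overline{f_k},K)$, so the two connections you name share edges of $\gamma$ and are not disjoint. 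What you do get is $\{\overline{f_k}\leftrightarrow y\}\circ\{y\leftrightarrow\underline{f_{k+1}}\}$ for some such $y$, together with the constraint (in the other copy, after unfolding) that $y\in\widetilde{\zeta}_{k-1}$, i.e., $y\leftrightarrow\overline{f_j}$ for some $j\le k-1$. Without this last constraint, summing $\tau(\overline{f_k},y)\tau(y,\underline{f_{k+1}})$ over $y$ gives only $O(n^{4-d})$, which is the wrong order.

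The correct control, as in the paper's Proposition~\ref{prop: E1}, couples the two copies: one uses the additional $\tau$-factor from $y\in\widetilde{\zeta}_{k-1}$ via the tree-graph inequality \eqref{eqn: triple-pt}, and the smallness in $K$ comes not from a one-arm probability but from the decay of the (open) triangle diagram \eqref{eqn: Delta}--\eqref{eqn: open-T} as the summation vertex is pushed outside $B(\overline{f_k},K)$. The resulting bound is $o_{K\to\infty}(1)\cdot(n^{2-d})^{k+1}$ (times harmless constants in $\epsilon$ and $L$), not $K^{-2}\tau$. Once you replace your BK shortcut by this triangle argument, the rest of your outline goes through and coincides with the paper's proof.
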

Recall that, for a vertex $v\in \mathbb{Z}^d$,   $W_v$ denotes the incipient infinite cluster under the relevant measure. 

The next lemma provides an asymptotic expression for the second factor in the sum in \eqref{eqn: rec1}.
\begin{lemma} 
For $k \geq 2$,
     \begin{equation}
         n^{(d-2)k}\cdot\mathbb{E}\left[X_{k-1},\Lambda_{k-1},\mathcal{L}(\mathcal{D}_{k})\right]
    \end{equation}
    equals
    \begin{equation}
        \begin{split}
        \label{eq:fix53}
         \frac{\tau(\overline{f_{k-1}},\underline{f_{k}})}{n^{2-d}}\cdot &\nu_{\underline{f_k}}(W_{\overline{f_k}} \cap B(\overline{f_k},K)=\mathcal{D}_k)
         \\
         \times\sum_{\mathcal{D}_{k-1}}&\mathbb{E}_{\nu_{\overline{f_{k-1}}}}\left[g(f_{k-1},\mathcal{D}_{k-1}),\exists\,\text{a path to}\,\infty\,\text{in}\,W_{\overline{f_{k-1}}}\,\text{off}\,\mathcal{D}_{k-1}\right] n^{(d-2)(k-1)}\mathbb{E}\left[X_{k-2},\Lambda_{k-2},\mathcal{L}(\mathcal{D}_{k-1})\right]\\
         &+\varepsilon_2(n,K,\mathcal{D}_k).
        \end{split}
     \end{equation}
     \label{rec2}
    The error $\varepsilon_2$ satisfies
    \[\lim_{K\rightarrow\infty}\limsup_{n\rightarrow\infty} \sum_{\mathcal{D}_k}\varepsilon_2(n,K,\mathcal{D}_k)=0.\]
    The limit is uniform in the choice of edges $f_1,\ldots,f_k\in F(\epsilon,M)$ for fixed $M$.
\end{lemma}
Finally, we have
\begin{lemma}
    \begin{equation}
        \mathbb{P}(\Lambda_0,\mathcal{L}(\mathcal{D}_1)) = (1+o_{n\rightarrow\infty}(1))\cdot \tau(\overline{f_0},\underline{f_1})\cdot \nu_{\underline{f_1}}(W_{\overline{f_1}}\cap B(\overline{f}_1,K)=\mathcal{D}_1).
        \label{eq:fix54}
    \end{equation}
    \label{rec3}
\end{lemma}
Putting these together, we get
\begin{proposition}
As $n \rightarrow \infty$,
    \begin{equation}
    \begin{split}
        &\mathbb{E}\left[X_m,\Lambda_{m}\right]\\
        =~&(1+o_n(1))\left(\mathbb{E}_{\nu_{\mathbf{e}_1}} \otimes \mathbb{E}_{\widetilde{\nu_0}}\left[g((0,\mathbf{e}_1), \widetilde{W}_0),\exists\,\text{a path to}\,\infty\,\text{in}\,W_{\mathbf{e}_1}\,\text{off}\,\widetilde{W}_{0}\right]\right)^m \cdot \prod_{j=0}^{m}\bm{c}|\underline{f_{j+1}}-\overline{f_{j}}|^{2-d}.
    \end{split}
        \label{thm-eq0}      
    \end{equation}
    Here, $o_n(1)$ denotes a quantity tending to 0 as $n\rightarrow\infty$.
    \label{rec4}
\end{proposition}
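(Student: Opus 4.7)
The plan is to prove Proposition \ref{rec4} by telescoping through Lemmas \ref{rec1}, \ref{rec2}, and \ref{rec3}. Starting from $\mathbb{E}[X_m,\Lambda_m]$, apply Lemma \ref{rec1} once with $k=m$, then apply Lemma \ref{rec2} successively for $k=m,m-1,\ldots,2$, and terminate with Lemma \ref{rec3} to handle the base probability $\mathbb{P}(\Lambda_0,\mathcal{L}(\mathcal{D}_1))$. Lemma \ref{rec1} peels off the two-point function $\tau(\overline{f_m},\underline{f_{m+1}})$ and pairs $g(f_m,\mathcal{D}_m)$ (under $\nu_{\overline{f_m}}$) with the residual $\mathbb{E}[X_{m-1},\Lambda_{m-1},\mathcal{L}(\mathcal{D}_m)]$. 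Each subsequent application of Lemma \ref{rec2} at index $k$ reduces the residual index by one, while extracting $\tau(\overline{f_{k-1}},\underline{f_k})$, the back-arm probability $\nu_{\underline{f_k}}(W_{\overline{f_k}}\cap B(\overline{f_k},K)=\mathcal{D}_k)$ (which pairs naturally with the IIC expectation of $g(f_k,\mathcal{D}_k)$ inherited from the previous step), and the next IIC expectation involving $g(f_{k-1},\mathcal{D}_{k-1})$. Finally, Lemma \ref{rec3} supplies $\tau(\overline{f_0},\underline{f_1})$ and the back-arm probability for $\mathcal{D}_1$ to close the telescoping.

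After all iterations, the sums over $\mathcal{D}_1,\ldots,\mathcal{D}_m$ factor, and the expression takes the form $\prod_{j=0}^m \tau(\overline{f_j},\underline{f_{j+1}})\cdot\prod_{k=1}^m A_k(K)\,+\,(\text{error})$, where
\[
A_k(K):=\sum_{\mathcal{D}} \mathbb{E}_{\nu_{\overline{f_k}}}\!\bigl[g(f_k,\mathcal{D}),\,\exists\text{ path to }\infty\text{ off }\mathcal{D}\bigr]\cdot\nu_{\underline{f_k}}\bigl(W_{\overline{f_k}}\cap B(\overline{f_k},K)=\mathcal{D}\bigr).
\]
The Hara--Slade asymptotic \eqref{eqn: two-pt} immediately replaces each $\tau(\overline{f_j},\underline{f_{j+1}})$ by $\bm{c}|\underline{f_{j+1}}-\overline{f_j}|^{2-d}(1+o_n(1))$, since every separation is at least $\epsilon n$ in the far regime. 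The key remaining claim is that $A_k(K)\to\alpha_g$ as $K\to\infty$ (independently of $k$). Using the translation invariance \eqref{rv-trans}, together with lattice symmetries to handle the direction of $f_k$, one can re-center at $\underline{f_k}=0$, $\overline{f_k}=\mathbf{e}_1$; the sum over $\mathcal{D}$ then becomes an integration against $\widetilde{\nu}_0$ with $\mathcal{D}=\widetilde{W}_0\cap B(0,K)$. By the locality property \eqref{eqn: V-locality}, $g$ stabilizes at $g((0,\mathbf{e}_1),\widetilde{W}_0)$ as soon as $K\geq L$, while the event "path to $\infty$ off $\mathcal{D}$" decreases monotonically to "path to $\infty$ in $W_{\mathbf{e}_1}$ off $\widetilde{W}_0$" as $K\to\infty$. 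Bounded convergence then yields $A_k(K)\to\alpha_g$.

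The main obstacle is the interchange of the $n\to\infty$ and $K\to\infty$ limits, since the errors $\varepsilon_1(n,K)$ and $\varepsilon_2(n,K)$ from the recursive lemmas satisfy only $\lim_K\limsup_n\varepsilon_i=0$ and are not themselves $o_n(1)$ for fixed $K$. The strategy is: given $\eta>0$, first fix $K$ large enough that $|A_k(K)-\alpha_g|<\eta$ for every $k\in\{1,\ldots,m\}$ and that $\limsup_n\varepsilon_i(n,K)<\eta$, then take $n\to\infty$ with $K$ held fixed so that the accumulated error (a sum of $m$ terms of the form $\varepsilon_i$ multiplied by bounded factors) falls below a small multiple of the main term. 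This is legitimate because the uniformity of the errors in the choice $f_1,\ldots,f_m\in[-nM,nM]^d$ asserted in Lemmas \ref{rec1} and \ref{rec2} propagates cleanly through the finitely many iteration steps, and because the main prefactor $\prod_{j=0}^m\bm{c}|\underline{f_{j+1}}-\overline{f_j}|^{2-d}$ is of the same order as the pre-limiting expression. One obtains matching upper and lower asymptotic bounds, giving the multiplicative error $(1+o_n(1))$ stated in \eqref{thm-eq0}.
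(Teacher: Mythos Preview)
Your proposal is correct and follows essentially the same strategy as the paper: both proofs use Lemmas \ref{rec1}--\ref{rec3} to peel off one two-point function and one IIC factor $A_k(K)\to\alpha_g$ at a time, then handle the $n\to\infty$/$K\to\infty$ interchange exactly as you describe. The only organizational difference is that the paper packages this as an induction on $m$ (applying Lemma \ref{rec1}, then Lemma \ref{rec2}, then recognizing via Lemma \ref{rec1} read backwards that the residual sum over $\mathcal{D}_{m-1}$ reconstitutes $\mathbb{E}[X_{m-1},\Lambda_{m-1}]$, to which the induction hypothesis applies), whereas you unroll the recursion fully by applying Lemma \ref{rec2} $m-1$ times and closing with Lemma \ref{rec3}; the two are equivalent.
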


\subsubsection{Proof of the far-regime estimate}
\label{sec: far-regime-proof}
Here, we derive the estimate in Proposition \ref{farregime-thm} given Proposition \ref{rec4}.
\begin{proof}[Proof of Proposition \ref{farregime-thm}]
   Applying Lemma \ref{open-closed-lem} and Proposition~\ref{rec4}, we obtain, for sufficiently large $n$, 
    \begin{equation}
    \begin{split}
        &\sum_{F(\epsilon,M)} \mathbb{E}\left[\prod_{i=1}^{k} g(f_i,\zeta_{i-1}^*), (f_1,\dots,f_k)\,\text{ordered open pivotals for}\, 0 \leftrightarrow n\mathbf{e}_1\right]\\
        =~ &(1 + o(1))\cdot  \bm{c}^{k+1} (\alpha\beta)^k \sum_{F(\epsilon,M)} \prod_{j=0}^{k} |\underline{f_{j+1}}-\overline{f_j}|^{2-d}.
        \end{split}
    \end{equation}
    Rescaling, we find the quantity
    \begin{equation}
    (1 + o(1)) \cdot \bm{c}^{k+1} \cdot (\alpha\beta)^k \cdot (n^{2-d})^{k+1} \cdot n^{kd}\sum_{F(\epsilon,M)} \frac{1}{n^{kd}} \prod_{j=0}^{k} \bigg\lvert \frac{\underline{f_{j+1}}}{n}-\frac{\overline{f_j}}{n}\bigg\rvert^{2-d}.
    \end{equation}
    We interpret the sum as a Riemann sum to obtain
    \begin{equation}
    (1 + o(1)) \cdot \bm{c}^{k+1} \cdot (\alpha\beta)^k \cdot (n^{2(k+1)-d}) \cdot  (2d)^k\cdot I_k(\epsilon,M).
    \end{equation}
    The factor $2d$ comes from each interior vertex appearing $2d$ times in the sum.
    As in the proof of Proposition \ref{nearregime-thm}, dividing by $n^{2(k+1)-d}$ and taking the limit as $n \rightarrow \infty$ completes the proof.
\end{proof}

\subsubsection{Proof of Proposition \ref{rec4}}\label{sec: recursive}
Here, we prove Proposition \ref{rec4}, given Lemma \ref{rec1}, Lemma \ref{rec2} and Lemma \ref{rec3}. To understand the relative size of the terms appearing in this section and the next two, it is useful to note that from \eqref{rv-bdd} and the BK inequality, we have
\begin{equation}
    \label{eqn: baseline}
    \mathbb{E}[X_k,\Lambda_k]= O(L^{d(k+1)}(n^{2-d})^{k+1}).
\end{equation}
Terms that are asymptotically smaller are error terms.
\begin{proof}
The proof is by induction. The case $m = 1$ can be obtained by combining Lemma \ref{rec1} with Lemma \ref{rec3}. We review it separately for the benefit of the reader.
We begin by writing
\begin{align*}
\mathbb{E}[X_1,\Lambda_1]&=\mathbb{E}[g(f_1,C(0)), 0\leftrightarrow \underline{f_1}, \overline{f_1}\leftrightarrow n\mathbf{e}_1 \text{ off } C(0)]\\
&=\sum_{\underline{f_1}\in \mathcal{C}}\mathbb{E}[g(f_1,\mathcal{C}),\, C(0)=\mathcal{C}, \overline{f_1}\leftrightarrow n\mathbf{e}_1 \text{ off } \mathcal{C}]\\
&=\sum_{\underline{f_1}\in \mathcal{C}}\mathbb{P}(C(0)=\mathcal{C})\mathbb{E}[g(f_1,\mathcal{C}),\overline{f_1}\leftrightarrow n\mathbf{e}_1 \text{ off } \mathcal{C}]
\end{align*}
Introducing a product space with measure $\mathbb{P}\otimes \tilde{\mathbb{P}}$, we rewrite this as 
\begin{equation}\label{eqn: split}
\tilde{\mathbb{E}}\big[\mathbb{E}[g(f_1,\tilde{C}(0)),\ \overline{f_1}\leftrightarrow n\mathbf{e}_1 \text{ off } \widetilde{C}(0)]\big].
\end{equation}
Here $\tilde{\mathbb{E}}$, represents the percolation measure on one factor of the product space, and $\tilde{C}(0)$ is the cluster of the origin in that factor. Next, we approximate the expectation in \eqref{eqn: split} as
\begin{align*}
&\tilde{\mathbb{E}}\big[\mathbb{E}[g(f_1,\tilde{C}(0)),\ \overline{f_1}\leftrightarrow n\mathbf{e}_1 \text{ off } \widetilde{C}(0)]\big]\\
=~&\tilde{\mathbb{E}}\big[\mathbb{E}[g(f_1,\tilde{C}(0)),\ \overline{f_1}\leftrightarrow n\mathbf{e}_1 \text{ off } B(\overline{f_1},K)\cap \tilde{C}(0)]\big]+ o_{K\rightarrow \infty}(1)\cdot \epsilon^{2-d} L^{2d} (n^{2-d})^2,
\end{align*}
where $o_{K\rightarrow \infty}(1)$ denotes a quantity tending to zero as $K\rightarrow \infty$, uniformly in $f_1$ for fixed $M$. This approximation will be argued in general and in detail in Proposition \ref{prop: E1}. We omit the details at this point and proceed.

Let $K\ge L$. Then, \eqref{rv-cyl} and \eqref{eqn: V-locality} imply that
\begin{align*}
&\mathbb{E}[g(f_1,\tilde{C}(0)),\ \overline{f_1}\leftrightarrow n\mathbf{e}_1 \text{ off } B(\overline{f_1},K)\cap \tilde{C}(0)]\\
=~&\mathbb{E}[g(f_1,B(\overline{f_1},K)\cap \tilde{C}(0)),\ \overline{f_1}\leftrightarrow n\mathbf{e}_1 \text{ off } B(\overline{f_1},K)\cap \tilde{C}(0)].
\end{align*}
We decompose the expectation according to the value of 
\[\mathcal{D}= B(\overline{f_1},K)\cap \tilde{C}(0).\]
Conditioning on $\{\overline{f}_1\leftrightarrow n\mathbf{e}_1\}$, we rewrite \eqref{eqn: split} as
\[\tau(\overline{f}_1,n\mathbf{e}_1)\sum_{\mathcal{D}}\tilde{\mathbb{P}}(\tilde{C}(0)\cap B(\overline{f_1},K)=\mathcal{D},0\lra \underline{f_1})\mathbb{E}[g(f_1, \mathcal{D}), \overline{f}_1\leftrightarrow n\mathbf{e}_1  \text{ off } \mathcal{D}\mid \overline{f}_1\leftrightarrow n\mathbf{e}_1 ].\]
We make one more approximation, replacing the last quantity by
\[\tau(\overline{f}_1,n\mathbf{e}_1)\sum_{\mathcal{D}}\tilde{\mathbb{P}}(\tilde{C}(0)\cap B(\overline{f_1},K)=\mathcal{D},0\lra \underline{f_1})\mathbb{E}[g(f_1,\mathcal{D}), \overline{f}_1\leftrightarrow \partial B(\overline{f_1},2^K)  \text{ off } \mathcal{D}\mid \overline{f}_1\leftrightarrow n\mathbf{e}_1 ]+2^{-K}L^{d}(n^{2-d})^{2}.\]
This approximation is proved in detail in Proposition \ref{prop: E2}.

By convergence to the IIC in Theorem \ref{thm:IICexistences} and \eqref{rv-cyl}, we have
\begin{align*}
&\mathbb{E}[g(f_1,\mathcal{D}), \overline{f}_1\leftrightarrow \partial B(\overline{f_1},2^K)  \text{ off } \mathcal{D}\mid \overline{f}_1\leftrightarrow n\mathbf{e}_1 ]\\
=~&\mathbb{E}_{\nu_{\overline{f_1}}}[g(f_1,\mathcal{D}), \overline{f_1}\leftrightarrow \partial B(\overline{f_1},2^K)  \text{ off } \mathcal{D}]+o_n(1).
\end{align*}
Inserting this into the sum above, we obtain using \eqref{rv-bdd}
\begin{equation}\label{eqn: sum-over-D}
\begin{split}\mathbb{E}[X_1,\Lambda_1]&=\tau(\overline{f}_1,n\mathbf{e}_1)\sum_{\mathcal{D}}\tilde{\mathbb{P}}(\tilde{C}(0)\cap B(\overline{f_1},K)=\mathcal{D}, 0\lra \underline{f_1})\mathbb{E}_{\nu_{\overline{f_1}}}[g(f_1,\mathcal{D}), \overline{f_1}\leftrightarrow \partial B(\overline{f_1},2^K)  \text{ off } \mathcal{D}]\\
&+ L^{d} (n^{2-d})^2(o_n(1)+o_K(1)\epsilon^{2-d}+2^{-K}).
\end{split}
\end{equation}
Resolving the sum over $\mathcal{D}$ in the sum in \eqref{eqn: sum-over-D}, we write the first term in \eqref{eqn: sum-over-D} as
\[\tau(0,\underline{f_1})\tau(\overline{f}_1,n\mathbf{e}_1)\tilde{\mathbb{E}}\big[ \mathbb{E}_{\nu_{\overline{f_1}}}[g(f_1,\tilde{C}(0)\cap B(\overline{f_1},K)), \overline{f_1}\leftrightarrow \partial B(\overline{f_1},2^K)  \text{ in } W_{\overline{f_1}} \text{ off }  \tilde{C}(0)\cap B(\overline{f_1},K)] \mid 0\leftrightarrow \underline{f_1}\big].\]
Since the quantity
\[\mathbb{E}_{\nu_{\overline{f_1}}}[g(f_1,\mathcal{D}), \overline{f_1}\leftrightarrow \partial B(\overline{f_1},2^K)  \text{ off } \mathcal{D}]\]
is a function of the finite set $\mathcal{D}$, \eqref{eqn: IIC-exp}  gives
\begin{align*}
&\tilde{\mathbb{E}}\big[ \mathbb{E}_{\nu_{\overline{f_1}}}[g(f_1,\tilde{C}(0)\cap B(\overline{f_1},K)), \overline{f_1}\leftrightarrow \partial B(\overline{f_1},2^K)  \text{ in } W_{\overline{f_1}} \text{ off }  \tilde{C}(0)\cap B(\overline{f_1},K)] \mid 0\leftrightarrow \underline{f_1}\big]\\
=~& \tilde{\mathbb{E}}_{\nu_{\underline{f_1}}}\big[ \mathbb{E}_{\nu_{\overline{f_1}}}[g(f_1,\widetilde{W}_{\underline{f_1}}), \overline{f_1}\leftrightarrow \partial B(\overline{f_1},2^K)  \text{ in } W_{\overline{f_1}} \text{ off }  \widetilde{W}_{\underline{f_1}}\cap B(\overline{f_1},K)]\big]+o_n(1)\\
=~& \tilde{\mathbb{E}}_{\nu_{\underline{f_1}}}\big[ \mathbb{E}_{\nu_{\overline{f_1}}}[g(f_1,\widetilde{W}_{\underline{f_1}}), \overline{f_1}\leftrightarrow \infty  \text{ in } W_{\overline{f_1}} \text{ off }  \widetilde{W}_{\underline{f_1}}\cap B(\overline{f_1},K)]\big]+o_n(1)+o_K(1),
\end{align*}
for $K$ large.

We have thus shown that the first term on the right in \eqref{eqn: sum-over-D} equals
\[(1+o_K(1)+o_n(1))\cdot \tau(0,\underline{f_1})\tau(\overline{f}_1,n\mathbf{e}_1)\tilde{\mathbb{E}}_{\nu_{\underline{f_1}}}\big[ \mathbb{E}_{\nu_{\overline{f_1}}}[g(f_1,\widetilde{W}_{\underline{f_1}}), \overline{f_1}\leftrightarrow \infty  \text{ in } W_{\overline{f_1}} \text{ off } \widetilde{W}_{\underline{f_1}}]].\]
Combined with \eqref{eqn: HS-scaling}, this gives the desired result.

Now let $m \geq 2$ and assume \eqref{thm-eq0} holds for $m-1$. By Lemma \ref{rec1}, 
\begin{equation}
\begin{split}
    &\mathbb{E}\left[X_m,\Lambda_m\right]\\
    =&~\tau(\overline{f_m},\underline{f_{m+1}})\cdot \sum_{\mathcal{D}_m}\mathbb{E}_{\nu_{\overline{f_m}}}\left[g(f_m,\mathcal{D}_m), \exists\,\text{a path to}\,\infty\,\text{in}\,W_{\overline{f_m}}\,\text{off}\,\mathcal{D}_m \right]\mathbb{E}\left[X_{m-1},\Lambda_{m-1},\mathcal{L}(\mathcal{D}_m)\right]\\
    +~&n^{(2-d)(m+1)}o_{K\rightarrow\infty}(1).
    \label{thm-eq1}
    \end{split}
\end{equation}
We treat the second expectation $\mathbb{E}$ as a separate, independent copy and expand it using Lemma \ref{rec2}. Then \eqref{thm-eq1} equals
\begin{equation}
   \tau(\overline{f_{m-1}},\underline{f_m})\tau(\overline{f_m},\underline{f_{m+1}})\sum_{\mathcal{D}_m}\mathbb{E}_{\nu_{\overline{f_m}}}\left[g(f_m,\mathcal{D}_m),\exists\,\text{a path to}\,\infty\,\text{in}\,W_{\overline{f_m}}\,\text{off}\,\mathcal{D}_m\right]\alpha(\mathcal{D}_m)
    \label{thm-eq-2}
\end{equation}
where $\alpha(\mathcal{D}_m)$ equals
\begin{equation}
    \begin{split}
    &\widetilde{\nu}_{\underline{f_m}}(\mathcal{D}_m=\widetilde{W}_{\underline{f_m}} \cap B(\overline{f_m},K) )\\
    \times&\sum_{\mathcal{D}_{m-1}}\mathbb{E}_{\widetilde{\nu}_{\overline{f_{m-1}}}}\left[g(f_{m-1},\mathcal{D}_{m-1}), \exists\,\text{a path to}\,\infty\,\text{in}\,W_{\overline{f_{m-1}}}\,\text{off}\,\mathcal{D}_{m-1}\right]\widetilde{\mathbb{E}}\left[X_{m-2},\Lambda_{m-2},L(\mathcal{D}_{m-1})\right]\\
    \\
   &+~n^{(d-2)m}\varepsilon(\mathcal{D}_m,K),
    \end{split}
    \label{thm-eq-3}
\end{equation}
and
\[\lim_{K\rightarrow\infty}\lim_{n\rightarrow\infty}\sum_{\mathcal{D}_m}\varepsilon(\mathcal{D}_m,K)=0.\]
By Lemma \ref{rec1}, \eqref{thm-eq-2} can be rewritten as
\begin{equation}
\begin{split}
    &\mathbb{E}\left[X_{m-1},\Lambda_{m-1}\right]\tau(\overline{f_m},\underline{f_{m+1}})\\
\times&\sum_{\mathcal{D}_m}\mathbb{E}_{\nu_{\overline{f_m}}}\left[g(f_m,\mathcal{D}_m),\exists\,\text{a path to}\,\infty\,\text{in}\,W_{\overline{f_m}}\,\text{off}\,\mathcal{D}_m\right]\widetilde{\nu}_{\underline{f_m}}(\mathcal{D}_m  = \widetilde{W}_{\underline{f_m}} \cap B(\overline{f_m},K))
    \end{split}
    \label{thm-eq-4}
\end{equation}
with an error
\begin{equation}\label{eqn: error-m+1}
n^{(d-2)(m+1)}L^d\cdot (o_K(1)+o_n(1)).
\end{equation}
The $o_K(1)$ is uniform in $n$. We have used \eqref{eqn: HS-scaling} and \eqref{rv-bdd} for the error term.

Resolving the partition in the main term \eqref{thm-eq-4}, we obtain
\begin{align*}&\mathbb{E}\left[X_{m-1},\Lambda_{m-1}\right]\tau(\overline{f_m},\underline{f_{m+1}})\\
\times~&\tilde{\mathbb{E}}_{\nu_{\underline{f_m}}}\left[\mathbb{E}_{\nu_{\overline{f_m}}}\left[g(f_m,\widetilde{W}_{\underline{f_m}} \cap B(\overline{f_m},K)),\,\exists\,\text{a path to}\,\infty\,\text{in}\,W_{\overline{f_m}}\,\text{off}\,\widetilde{W}_{\underline{f_m}} \cap B(\overline{f_m},K)\right]\right].
\end{align*}
By \eqref{eqn: V-locality} we have, for $K\ge L$ 
\begin{align*}
&\mathbb{E}_{\nu_{\overline{f_m}}}\left[g(f_m,\widetilde{W}_{\underline{f_m}} \cap B(\overline{f_m},K)),\,\exists\,\text{a path to}\,\infty\,\text{in}\,W_{\overline{f_m}}\,\text{off}\,\widetilde{W}_{\underline{f_m}} \cap B(\overline{f_m},K)\right]\\
=~&\mathbb{E}_{\nu_{\overline{f_m}}}\left[g(f_m,\widetilde{W}_{\underline{f_m}}),\,\exists\,\text{a path to}\,\infty\,\text{in}\,W_{\overline{f_m}}\,\text{off}\,\widetilde{W}_{\underline{f_m}} \cap B(\overline{f_m},K)\right]\\
=~&(1+o_K(1)+o_n(1))\mathbb{E}_{\nu_{\overline{f_m}}}\left[g(f_m,\widetilde{W}_{\underline{f_m}}),\,\exists\,\text{a path to}\,\infty\,\text{in}\,W_{\overline{f_m}}\,\text{off}\,\widetilde{W}_{\underline{f_m}} \right]
\end{align*}

We use \eqref{rv-trans} to rewrite the main term as an expectation with respect to product of IIC measures
\begin{equation}
    \mathbb{E}\left[X_{m-1},\Lambda_{m-1}\right]\cdot \tau(\overline{f_m},\underline{f_{m+1}}) \cdot \mathbb{E}_{\nu_{\mathbf{e}_1}} \otimes \mathbb{E}_{\widetilde{\nu_{0}}}\left[g(\mathbf{e}_1,\widetilde{W}_0),\exists\,\text{a path to}\,\infty\,\text{in}\,W_{\mathbf{e}_1}\,\text{off}\,\widetilde{W}_0\right].
    \label{thm-eq-5}
\end{equation}
Since $|\underline{f_{k+1}}-\overline{f_k}| > \epsilon n$ for all $k$, we may apply the two-point asymptotic \eqref{eqn: two-pt}, giving
\begin{equation}
    \begin{split}&\mathbb{E}\left[X_m,\Lambda_m\right]\\
    =~&\mathbb{E}\left[X_{m-1},\Lambda_{m-1}\right] \cdot (1+o_n(1)+o_K(1)) \\
    &\times \mathbb{E}_{\nu_{\mathbf{e}_1}} \otimes \mathbb{E}_{\widetilde{\nu_{0}}}\left[g(\mathbf{e}_1,\widetilde{W}_0),\exists\,\text{a path to}\,\infty\,\text{in}\,W_{\mathbf{e}_1}\,\text{off}\,\widetilde{W}_0\right] \cdot \bm{c}|\underline{f_{m+1}}-\overline{f_m}|^{2-d}\\
    +~&n^{(2-d)(m+1)}L^d \cdot (o_K(1)+o_n(1)).
    \end{split}
\end{equation}
Normalizing by $n^{(2-d)(m+1)}$ and taking the $n\rightarrow\infty$, and then the  $K\rightarrow\infty$ limits completes the proof.
\end{proof}

\subsubsection{Leading order terms}\label{sec: leading}
In this section, we isolate the leading order terms in \eqref{eqn: rec1}, \eqref{eq:fix53}, and \eqref{eq:fix54}; the error terms \eqref{eqn: E1def}, \eqref{eqn: E2def}, \eqref{eqn: E3def}, \eqref{eqn: E4def}, \eqref{eqn: E5def} and \eqref{eqn: E6def} can be neglected. These error terms are then treated separately, in Section \ref{sec: LOT}.
\begin{proof}[Leading order term for Lemma \ref{rec1}]
First partition along admissible configurations up to $\underline{f_{k-1}}$. 
\begin{equation}
    \mathbb{E}[X_k,\Lambda_{k}] = \sum_{\mathcal{C}} \mathbb{E}\left[g(f_k,\mathcal{C}), X_{k-1},\ \Lambda_{k-1},  \zeta_{k-1}=\mathcal{C}, \overline{f_{k}} \leftrightarrow \underline{f_{k+1}}\,\text{off}\,\mathcal{C}\right].
    \label{lem-rec1-1}
\end{equation}
Since $\Lambda_{k-1}$ depends only on bonds with at least one site in $\mathcal{C}$, we rewrite the sum as 
\begin{equation}
    \sum_{\mathcal{C}} \mathbb{E}\left[X_{k-1},\Lambda_{k-1},  \zeta_{k-1}=\mathcal{C}\right]\mathbb{E}[g(f_k,\mathcal{C}),\overline{f_{k}} \leftrightarrow\underline{f_{k+1}}\,\text{off}\,\mathcal{C}].
    \label{lem-rec1-2}
\end{equation}
Unfold into two copies as in \eqref{eqn: split}:
\begin{equation}
    \sum_{\mathcal{C}} \widetilde{\mathbb{E}}\left[X_{k-1}\mathbbm{1}_{\Lambda_{k-1}}\mathbbm{1}_{\widetilde{\zeta}_{k-1}=\mathcal{C}}\mathbb{E}[g(f_k,\mathcal{C}), \overline{f_k} \leftrightarrow\underline{f_{k+1}}\,\text{off}\,\mathcal{C}]\right].
    \label{lem-rec1-3}
\end{equation}
Resolve the partition and upper bound \eqref{lem-rec1-3} by
\begin{equation}
    \widetilde{\mathbb{E}}\left[X_{k-1}\mathbbm{1}_{\Lambda_{k-1}}\mathbb{E}\big[g(f_k,\widetilde{\zeta}_{k-1}),\overline{f_k}\leftrightarrow\underline{f_{k+1}}\,\text{off}\, \widetilde{\zeta}_{k-1} \cap B(\overline{f_k},K)\big]\right].
    \label{lem-rec1-4}
\end{equation}
We prove in Proposition \ref{prop: E1} that the difference
\begin{equation}\label{eqn: E1def}
E_1:=\eqref{lem-rec1-4} - \eqref{lem-rec1-3}
\end{equation}
is of lower order than the main term \eqref{eqn: rec1}:
\[\lim_{K\rightarrow\infty}\limsup_{n\rightarrow\infty} \frac{E_1}{(n^{d-2})^{k+1}}=0.\]
Recall from \eqref{eqn: baseline} that the main term in \eqref{lem-rec1-1} is order $n^{(d-2)(k+1)}$.

We now partition \eqref{lem-rec1-4} over admissible $\mathcal{D}_k\subset B(\overline{f_k},K)$:
\begin{equation}
    \sum_{\mathcal{D}_k} \widetilde{\mathbb{E}}[X_{k-1}\mathbbm{1}_{\Lambda_{k-1}}\mathbbm{1}_{\mathcal{L}(\mathcal{D}_k)}]\mathbb{E}[g(f_k,\mathcal{D}_k),\overline{f_k}\leftrightarrow\underline{f_{k+1}}\,\text{off}\,\mathcal{D}_k].
    \label{lem1-rec1-5}
\end{equation}
Condition on the connection event $\{\overline{f_k}\leftrightarrow\underline{f_{k+1}}\}$:
\begin{equation}
    \tau(\overline{f_k},\underline{f_{k+1}}) \sum_{\mathcal{D}_k} \mathbb{E}[g(f_k,\mathcal{D}_k),\overline{f_k} \leftrightarrow \underline{f_{k+1}}\,\text{off}\,\mathcal{D}_k \mid \overline{f_k} \leftrightarrow \underline{f_{k+1}}] \widetilde{\mathbb{E}}[X_{k-1}\mathbbm{1}_{\Lambda_{k-1}}, \mathcal{L}(\mathcal{D}_k)].
    \label{lem-rec1-6}
\end{equation}
For $R\ge K$, this is bounded above by
\begin{equation}
    \tau(\overline{f_k},\underline{f_{k+1}}) \sum_{\mathcal{D}_k} \mathbb{E}[g(f_k, \mathcal{D}_k),\overline{f_k} \leftrightarrow \partial B(f_k;2^R)\,\text{off}\,\mathcal{D}_k \mid \overline{f_k} \leftrightarrow \underline{f_{k+1}}] \widetilde{\mathbb{E}}[X_{k-1},\Lambda_{k-1}\cap \mathcal{L}(\mathcal{D}_k)],
    \label{lem-rec1-7}
\end{equation}
with the difference being given by
\begin{equation}\label{eqn: E2def}
E_2:=\eqref{lem-rec1-7}-\eqref{lem-rec1-6}.
\end{equation}
We show in Proposition \ref{prop: E2} that this is a lower order term when $R\rightarrow \infty$
\[\lim_{R\rightarrow\infty} \limsup_{n\rightarrow\infty}\frac{E_2}{(n^{d-2})^{k+1}}=0, \]
uniformly in $K$.
For $n$ sufficiently large and $K\le R$ fixed, we approximate the conditional expectation in \eqref{lem-rec1-7} using the IIC limit:
\begin{equation}
    (1+o_{n\rightarrow\infty}(1))\cdot \tau(\overline{f_k},\underline{f_{k+1}}) \sum_{\mathcal{D}_k} \mathbb{E}_{\nu_{\overline{f_k}}}[g(f_k,\mathcal{D}_k),\overline{f_k}\leftrightarrow\partial B(f_k;2^R)\,\text{off}\,\mathcal{D}_k]\widetilde{\mathbb{E}}[X_{k-1},\Lambda_{k-1}\cap  \mathcal{L}(\mathcal{D}_k)].
    \label{lem-rec1-8}
\end{equation}
Now let $R$ and then $K$ be sufficiently large to obtain the desired
\begin{equation}
    (1+o_{K}(1)+o_n(1))\tau(\overline{f_k},\underline{f_{k+1}}) \sum_{\mathcal{D}_k} \mathbb{E}_{\nu_{\overline{f_k}}}[g(f_k,\mathcal{D}_k),\exists\,\text{a path to}\,\infty\,\text{in}\,W_{\overline{f}_k}\,\text{off}\,\mathcal{D}_k]\widetilde{\mathbb{E}}[X_{k-1},\Lambda_{k-1}\cap \mathcal{L}(\mathcal{D}_k)],
    \label{lem-rec1-9}
\end{equation}
where $W_{\overline{f_k}}$ is the IIC based at $\overline{f_k}$. 
\end{proof}

\begin{proof}[Leading order term for Lemma \ref{rec2}]
Here, we use the stronger version of the IIC convergence in Theorem \ref{thm:IICexistences} including avoidance of a far-away set $D_n$. First, write 
\begin{equation}
    \mathbb{E}[X_{k-1},\Lambda_{k-1},\mathcal{L}(\mathcal{D}_k)] = \mathbb{E}\left[X_{k-2}\cdot g(f_{k-1},\zeta_{k-2}), \Lambda_{k-2}, \overline{f_{k-1}} \leftrightarrow \underline{f_{k}}\,\text{off}\,\zeta_{k-2},\mathcal{D}_k = \zeta_{k-1}\cap B(\overline{f_k},K)  \right].
    \label{lem-rec2-1}
\end{equation}
Partitioning over admissible values $\mathcal{C}$ of the $\zeta_{k-2}$ cluster, the expectation \eqref{lem-rec2-1} equals
\begin{equation}
    \sum_{\mathcal{C}}  \mathbb{E}\left[X_{k-2} \cdot g(f_{k-1},\mathcal{C}),   \zeta_{k-2}=\mathcal{C}, \Lambda_{k-2}, \overline{f_{k-1}} \leftrightarrow \underline{f_{k}}\,\text{off}\,\mathcal{C}, \mathcal{D}_k =  \zeta_{k-1} \cap B(\overline{f_k},K) \right].
    \label{lem-rec2-2}
\end{equation}
Note that $\mathcal{C} \cap \mathcal{D}_k = \varnothing$. If this were false, then $\overline{f_{k-1}}$ would be a site of $\mathcal{C}$, violating the condition
\[\{\overline{f_{k-1}} \leftrightarrow \underline{f_k}\,\text{off}\,\mathcal{C}\},\] and making the above probability zero. We cannot in general make the stronger statement 
\begin{equation}\label{eqn: intersect}
(\mathcal{C}\cup\partial\mathcal{C}) \cap (\mathcal{D}_k \cup \partial\mathcal{D}_k) = \varnothing.
\end{equation}
However, if \eqref{eqn: intersect} holds, the events
\[\{\mathcal{C} =  \zeta_{k-2}\} \text{ and } \{\overline{f_{k-1}} \leftrightarrow \underline{f_{k}}\,\text{off}\,\mathcal{C}, \mathcal{D}_k = B(\overline{f_k},K) \cap \zeta_{k-1}\}\]
are independent. Letting $\mathcal{S}(\mathcal{D}_k)$ be the collection of $\mathcal{C}$ that satisfy \eqref{eqn: intersect}, we have
\begin{equation}
\begin{split} 
&\sum_{\mathcal{C}\in\mathcal{S}(\mathcal{D}_k)}  \mathbb{E}\left[X_{k-2} \cdot g(f_{k-1},\mathcal{C}),   \zeta_{k-2}=\mathcal{C}, \overline{f_{k-1}} \leftrightarrow \underline{f_{k}}\,\text{off}\,\mathcal{C}, \mathcal{D}_k = B(\overline{f_k},K) \cap \zeta_{k-1} \right]\\
    =&\sum_{\mathcal{C}\in \mathcal{S}(\mathcal{D}_k)} \mathbb{E}\big[X_{k-2}, \zeta_{k-2}=\mathcal{C}  \big]\mathbb{E}[g(f_{k-1},\mathcal{C}),\overline{f_{k-1}} \leftrightarrow \underline{f_{k}}\,\text{off}\,\mathcal{C}, \mathcal{D}_k = B(\overline{f_k},K) \cap \zeta_{k-1}].
\end{split}
    \label{lem-rec2-3}
\end{equation}
The difference
\begin{equation}\label{eqn: E3def}
    E_3=E_3(\mathcal{D}_k):= \eqref{lem-rec2-3}-\eqref{lem-rec2-2},
\end{equation}
is a sum over $\mathcal{C}\in \mathcal{S}(\mathcal{D}_k)^c$. That is, $\mathcal{C}$ such that $(\mathcal{C}\cup\partial\mathcal{C}) \cap (\mathcal{D}_k \cup \partial\mathcal{D}_k) \neq \varnothing$. This sum is a lower order term, as shown in Proposition \ref{prop: E3}:
\[\lim_{n\rightarrow\infty} \frac{E_3}{(n^{2-d})^k}=0,\]
uniformly in $\mathcal{D}_k$, whereas the main term in \eqref{lem-rec2-2} is of order $(n^{2-d})^k$.

We unfold \eqref{lem-rec2-3} into an expectation with respect to a product measure $\mathbb{P}\otimes \tilde{\mathbb{P}}$: 
\begin{equation}
    \widetilde{\mathbb{E}}\left[\sum_{\mathcal{C}\in\mathcal{S}(\mathcal{D}_k)}X_{k-2}\mathbbm{1}_{\mathcal{C}=\widetilde{\zeta}_{k-2}}\mathbb{E}\left(g(f_{k-1},\mathcal{C}),\overline{f_{k-1}}\leftrightarrow\underline{f_k}\,\text{off}\,\widetilde{\zeta}_{k-2},\mathcal{D}_k=B(\overline{f_k},K) \cap \zeta_{k-1}\right)\right].
    \label{lem-rec2-3pt5}
\end{equation}
We now replace the sum over $\mathcal{S}(\mathcal{D}_k)$ with a sum over all $\mathcal{C}$ admissible that are also compatible with the event $\Lambda_{k-2}$. Resolving the partition, we obtain: 
\begin{equation}
    \widetilde{\mathbb{E}}\left[X_{k-2}\mathbbm{1}_{\Lambda_{k-2}}\mathbb{E}\Big(g(f_{k-1},\tilde{\zeta}_{k-2}),\overline{f_{k-1}}\leftrightarrow\underline{f_k}\,\text{off}\,\widetilde{\zeta}_{k-2},\mathcal{D}_k=B(\overline{f_k},K) \cap \zeta_{k-1}\Big)\right].
    \label{lem-rec2-4}
\end{equation}
Using \eqref{rv-bdd}, the error incurred in the replacement is bounded by 
\begin{equation}\label{eqn: E4def}
E_4:=L^{(k-1)d} \cdot \widetilde{\mathbb{E}}\left[\mathbbm{1}_{\Lambda_{k-2}}, (\tilde{\zeta}_{k-2}\cup \partial \tilde{\zeta}_{k-2})\cap (\mathcal{D}_k\cup \partial \mathcal{D}_k) \neq \varnothing ,\mathbb{P}\left(\overline{f_{k-1}}\leftrightarrow\underline{f_k}\,\text{off}\,\widetilde{\zeta}_{k-2},\mathcal{D}_k=B(\overline{f_k},K) \cap \zeta_{k-1}\right)\right].
\end{equation}
This is shown to be of lower order than the main term in Proposition \ref{prop: E4}:
\[\lim_{n\rightarrow\infty} \frac{E_4}{(n^{2-d})^k}=0.\]
The quantity \eqref{lem-rec2-4} is bounded above by
\begin{equation}
\widetilde{\mathbb{E}}\left[X_{k-2}\mathbbm{1}_{\Lambda_{k-2}}\mathbb{E}\left(g(f_{k-1},\tilde{\zeta}_{k-2}),\overline{f_{k-1}}\leftrightarrow\underline{f_k}\,\text{off}\,\widetilde{\zeta}_{k-2}\cap B(f_{k-1};K),\mathcal{D}_k=B(\overline{f_k},K) \cap \zeta_{k-1}\right)\right].
    \label{lem-rec2-5}
\end{equation}
We denote 
\begin{equation}\label{eqn: E5def}
E_5:=\eqref{lem-rec2-5}-\eqref{lem-rec2-4},
\end{equation}
and show in Proposition \ref{prop: E5} below that $E_5$ is an error term:
\[\lim_{n\rightarrow \infty}\frac{E_5}{(n^{2-d})^{k}}=0.\]

Next, we truncate \eqref{lem-rec2-5}  to
\begin{equation}\label{eqn: lady-godiva}
\widetilde{\mathbb{E}}\left[X_{k-2}\mathbbm{1}_{\Lambda_{k-2}}\mathbb{E}\left(g(f_{k-1},\tilde{\zeta}_{k-2}),\overline{f_{k-1}}\leftrightarrow\underline{f_k}\,\text{off}\,\widetilde{\zeta}_{k-2}\cap B(f_{k-1},K),\mathcal{D}_k=B(\overline{f_k},K) \cap C(\underline{f_k})\right)\right].
\end{equation}
In Proposition \ref{prop: only-W}, we show that 
\begin{equation}\label{eqn: E6def}
E_6:=\eqref{lem-rec2-5}-\eqref{eqn: lady-godiva}
\end{equation}
is an error:
\[\lim_{n\rightarrow\infty}  \frac{E_6}{(n^{2-d})^{k}}=0.\]

Finally, we approximate \eqref{eqn: lady-godiva} by the quantity:
\begin{equation}\label{eqn: girard}
\widetilde{\mathbb{E}}\left[X_{k-2}\mathbbm{1}_{\Lambda_{k-2}}\mathbb{E}\left(g(f_{k-1},\tilde{\zeta}_{k-2}),\overline{f_{k-1}}\leftrightarrow\underline{f_k}\,\text{off}\,\widetilde{\zeta}_{k-2}\cap B(f_{k-1};K),\mathcal{D}_k=B(\overline{f_k},K) \cap C_{B(\underline{f_k},K^d)}(\underline{f_k})\right)\right].
\end{equation}
Here, as in Definition \ref{def: local-CA}, the notation $C_A(u)$ denotes the cluster of a vertex $u$ inside the set $A$.

It is shown in Proposition \ref{prop: localize} that
\begin{equation}\label{eqn: E7def} E_7=E_7(\mathcal{D}_k):=|\eqref{eqn: girard}-\eqref{eqn: lady-godiva}|
\end{equation}
is an error term when summed over $\mathcal{D}_k$:
\[\lim_{K\rightarrow\infty} \lim_{n\rightarrow\infty}  \frac{1}{(n^{2-d})^k}\sum_{\mathcal{D}_k}E_7(\mathcal{D}_k)=0.\]

Letting $K\ge L$, we partition \eqref{eqn: girard} according to the realizations of the clusters, using \eqref{eqn: V-locality}:
\begin{equation}
    \sum_{\mathcal{D}_{k-1}}\widetilde{\mathbb{E}}\left[X_{k-2}\mathbbm{1}_{\Lambda_{k-2}}\mathbbm{1}_{\mathcal{L}(\mathcal{D}_{k-1})}\mathbb{E}\left(g(f_{k-1},\mathcal{D}_{k-1}),\overline{f_{k-1}}\leftrightarrow\underline{f_k}\,\text{off}\,\mathcal{D}_{k-1},\mathcal{D}_k=B(\overline{f_k},K) \cap  C_{B(\underline{f_k},K^d)}(\underline{f_k})\right)\right].
    \label{lem-rec2-6}
\end{equation}
We partition the inner expectation over the value of $g$:
\begin{align*}
&\mathbb{E}\left(g(f_{k-1},\mathcal{D}_{k-1}),\overline{f_{k-1}}\leftrightarrow\underline{f_k}\,\text{off}\,\mathcal{D}_{k-1},\mathcal{D}_k=B(\overline{f_k},K) \cap  C_{B(\underline{f_k},K^d)}(\underline{f_k})\right)\\
=& \sum_m m\cdot \mathbb{P}\left( \  g(f_{k-1},\mathcal{D}_{k-1}) = m,\ \overline{f_{k-1}}\leftrightarrow\underline{f_k}\,\text{off}\,\mathcal{D}_{k-1},\mathcal{D}_k=B(\overline{f_k},K) \cap C_{B(\underline{f_k},K^d)(\underline{f_k})}\right).
\end{align*}
Note that the sum over $m$ is finite. We now claim the following identity, to be proved below:
\begin{equation}\label{eqn: g-m-identity}
\begin{split}
&\mathbb{P}\left(\mathcal{D}_k=B(\overline{f_k},K) \cap  C_{B(\underline{f_k},K^d)(\underline{f_k})}, \  g(f_{k-1},\mathcal{D}_{k-1}) = m,\ \overline{f_{k-1}}\leftrightarrow\underline{f_k}\,\text{off}\,\mathcal{D}_{k-1}\right)\\
=&~(1+o_n(1))\cdot \nu_{\underline{f_k}}\left(\mathcal{D}_k = B(\overline{f_k},K)\cap W_{\underline{f_k}
}\right)
      \mathbb{P}(g(f_{k-1}, \mathcal{D}_{k-1})=m, \overline{f_{k-1}} \leftrightarrow\underline{f_k}\,\text{off}\,\mathcal{D}_{k-1}).
      \end{split}
\end{equation}
Inserting this into \eqref{lem-rec2-6}, we obtain
\begin{equation}
    (1+o_n(1))\cdot \nu_{\underline{f_k}}\left(\mathcal{D}_k = B(\overline{f_k},K)\cap W_{\underline{f_k}}\right)\sum_{\mathcal{D}_{k-1}} \mathbb{E}(g(f_{k-1}),\overline{f_{k-1}}\leftrightarrow\underline{f_k}\,\text{off}\,\mathcal{D}_{k-1})\widetilde{\mathbb{E}}(X_{k-2},\Lambda_{k-2},L(\mathcal{D}_{k-1})).
    \label{lem-rec2-8}
\end{equation}
Conditioning on the unconstrained event $\{\overline{f_{k-1}}\leftrightarrow \underline{f_k}\}$, we rewrite this as
\begin{equation}
\begin{split}
    (1+o_n(1))&\tau(\overline{f_{k-1}},\underline{f_k})\nu_{\underline{f_k}}\left(\mathcal{D}_k = B(\overline{f_k},K)\cap W_{\underline{f_k}}\right)\\
    \times& \sum_{\mathcal{D}_{k-1}} \mathbb{E}(g(f_{k-1}),\overline{f_{k-1}}\leftrightarrow\underline{f_k}\,\text{off}\,\mathcal{D}_{k-1} \mid \overline{f_{k-1}}\leftrightarrow\underline{f_k})\widetilde{\mathbb{E}}(X_{k-2},\Lambda_{k-2},L(\mathcal{D}_{k-1})).
\end{split}
    \label{lem-rec2-8o}
\end{equation}
For $R\ge K$, we upper bound the sum by
\begin{equation}
    (1+o_n(1))\sum_{\mathcal{D}_{k-1}} \mathbb{E}(g(f_{k-1}),\overline{f_{k-1}}\leftrightarrow\partial B(f_{k-1};2^R)\,\text{off}\,\mathcal{D}_{k-1} \mid \overline{f_{k-1}}\leftrightarrow\underline{f_k})\widetilde{\mathbb{E}}(X_{k-2},\Lambda_{k-2},L(\mathcal{D}_{k-1})).
    \label{lem-rec2-9}
\end{equation}
Up to factor $L^{(k-1)d}$ resulting from \eqref{rv-bdd}, the difference between the sum over $\mathcal{D}_{k-1}$ in \eqref{lem-rec2-9} and the same in \eqref{lem-rec2-8o} (ignoring the factors outside the sum) is bounded by a lower order term, which we denote by $E_8$:
\begin{equation}\label{eqn: E8def}
\begin{split}
E_8&:=\sum_{\mathcal{D}_{k-1}} \Delta_k\, \widetilde{\mathbb{P}}(\Lambda_{k-2},L(\mathcal{D}_{k-1})),\\
\Delta_k&:=\mathbb{P}(\overline{f_{k-1}}\leftrightarrow\partial B(f_{k-1};2^R)\,\text{off}\,\mathcal{D}_{k-1} \mid \overline{f_{k-1}}\leftrightarrow\underline{f_k})\\
&~-\mathbb{P}(\overline{f_{k-1}}\leftrightarrow\underline{f_k}\,\text{off}\,\mathcal{D}_{k-1} \mid \overline{f_{k-1}}\leftrightarrow\underline{f_k}).
\end{split}
\end{equation}
We show in Proposition \ref{prop: E8} that $E_8$ is an error term:
\[\lim_{R\rightarrow\infty} \limsup_{n\rightarrow\infty} \frac{E_8}{(n^{2-d})^{k-1}}=0.\]

Using the IIC approximation \eqref{eqn: IIC-exp} in \eqref{lem-rec2-9}, we have
\begin{equation}
    (1+o_n(1))\sum_{\mathcal{D}_{k-1}} \mathbb{E}_{\nu_{\overline{f_{k-1}}}}(g(f_{k-1}),\overline{f_{k-1}}\leftrightarrow\partial B(f_{k-1};2^R)\,\text{off}\,\mathcal{D}_{k-1})\widetilde{\mathbb{E}}(X_{k-2},\Lambda_{k-2},L(\mathcal{D}_{k-1})).
    \label{lem-rec2-10}
\end{equation}
Sending $R$ to infinity, we find that \eqref{lem-rec2-9} equals
\begin{equation}
    (1+o_n(1))\sum_{\mathcal{D}_{k-1}} \mathbb{E}_{\nu_{\overline{f_{k-1}}}}(g(f_{k-1}),\exists\,\text{a path to}\,\infty\,\text{in}\,W_{\overline{f_{k-1}}}\,\text{off}\,\mathcal{D}_{k-1})\widetilde{\mathbb{E}}(X_{k-2},\Lambda_{k-2},L(\mathcal{D}_{k-1})).
    \label{lem-rec2-11}
\end{equation}
This completes the proof, up to the derivation of the identity \eqref{eqn: g-m-identity}.
\end{proof}

\begin{proof}[Proof of \eqref{eqn: g-m-identity}]
We first note that items (i) and (ii) in Definition \ref{def: local-var} guarantee that $g$ is measurable with respect to the restricted cluster 
\[\mathcal{C}_{k-1}:=C^{\mathcal{D}_{k-1} \cup (\mathbb{Z}^d\setminus B(\overline{f}_{k-1}, L))}(\overline f_{k-1}).\] This is the portion of the cluster of $\overline f_{k-1}$ reachable via only paths lying entirely in $B(f_{k-1}, L)$, and moreover avoiding $\mathcal{D}_{k-1}$.

Let us define a new variable $X$ to be the vector of open/closed statuses of edges which have an endpoint in $C^{\mathcal{D}_{k-1} \cup (\mathbb{Z}^d\setminus B(\overline{f}_{k-1}, L))}(\overline f_{k-1})$. When we explore the value of $X$, what we are exploring is:
\begin{itemize}
    \item edges internal to the cluster $\mathcal{C}_{k-1}$, which connect two vertices of this cluster;
    \item closed edges between a vertex of the cluster $\mathcal{C}_{k-1}$
    and a vertex which lies outside of this cluster but within $B(\overline{f}_{k-1}, L)$. These edges make up the boundary of $\mathcal{C}_{k-1}$ within $B(\overline{f}_{k-1}, L)$;
    \item exterior edges, with one endpoint in the cluster $\mathcal{C}_{k-1}$ and one endpoint outside $B(\overline{f}_{k-1}, L)$.
\end{itemize}
Let $Y$ be the set of vertices $v \notin B(\overline{f}_{k-1}, L)$ which are an endpoint of an open exterior edge. For each $m$, the occurrence (or non-occurrence) of $\{g(f_{k-1}) = m\}$ is determined by the value of $X$. We decompose based on the value $x$ of $X$, writing $Y(x)$ for the value of $Y$ when $X = x$ and $\mathcal{C}_{k-1}(x)$ similarly for the value of this cluster. Then, for $\epsilon n>K^d$, we have
\begin{align*}
    &\mathbb{P}\left(\mathcal{D}_k=B(\overline{f_k},K) \cap C_{B(\underline{f_k},K^d)}(\underline{f_k}), \  g(f_{k-1}) = m,\ \overline{f_{k-1}}\leftrightarrow\underline{f_k}\,\text{off}\,\mathcal{D}_{k-1}\right)\\
    =&\sum_{x: g(f_{k-1}) = m} \mathbb{P}\left(\mathcal{D}_k=B(\overline{f_k},K) \cap C_{B(\underline{f_k},K^d)}(\underline{f_k}), \ \overline{f_{k-1}}\leftrightarrow\underline{f_k}\,\text{off}\,\mathcal{D}_{k-1}, \, X = x\right)\\
     =&\sum_{x: \, g(f_{k-1}) = m} \mathbb{P}(X = x)\mathbb{P}\left(\mathcal{D}_k=B(\overline{f_k},K) \cap C_{B(\underline{f_k},K^d)}(\underline{f_k}), \ \overline{f_{k-1}}\leftrightarrow\underline{f_k}\,\text{off}\,\mathcal{D}_{k-1}\  \mid\  X = x\right)\\
     =&\sum_{x: \, g(f_{k-1}) = m} \mathbb{P}(X = x)\mathbb{P}\left(\mathcal{D}_k=B(\overline{f_k},K) \cap C_{B(\underline{f_k},K^d)}(\underline{f_k}), \ Y(x) \leftrightarrow\underline{f_k}\,\text{off}\,\mathcal{D}_{k-1} \cup \mathcal{C}_{k-1}(x) \  \mid\  X = x\right)\\
     =&\sum_{x: \, g(f_{k-1}) = m} \mathbb{P}(X = x)\mathbb{P}\left(\mathcal{D}_k=B(\overline{f_k},K) \cap C_{B(\underline{f_k},K^d)}(\underline{f_k}), \ Y(x) \leftrightarrow\underline{f_k}\,\text{off}\,\mathcal{D}_{k-1} \cup \mathcal{C}_{k-1}(x)\right)\\
      =&\sum_{x: \, g(f_{k-1}) = m} \mathbb{P}(X = x)\mathbb{P}\left(\mathcal{D}_k=B(\overline{f_k},K) \cap C_{B(\underline{f_k},K^d)}(\underline{f_k}) \mid  Y(x) \leftrightarrow\underline{f_k}\,\text{off}\,\mathcal{D}_{k-1} \cup \mathcal{C}_{k-1}(x)\right)\\
      &\qquad \quad  \times  \mathbb{P}\left( Y(x) \leftrightarrow\underline{f_k}\,\text{off}\,\mathcal{D}_{k-1} \cup \mathcal{C}_{k-1}(x)\right)\ .
\end{align*}

The conditional probability
\begin{equation}\label{eqn: P-pre-limit}
\mathbb{P}\left(\mathcal{D}_k=B(\overline{f_k},K) \cap C_{B(\underline{f_k},K^d)}(\underline{f_k}) \mid  Y(x) \leftrightarrow\underline{f_k}\,\text{off}\,\mathcal{D}_{k-1} \cup \mathcal{C}_{k-1}(x)\right)
\end{equation}
involves only deterministic sets. Applying Theorem \ref{thm:IICexistences} and Lemma~\ref{lem:prelocalize}, we have the approximation
\begin{align*}
    \nu_{\underline{f_k}}\left(\mathcal{D}_k = B(\overline{f_k},K)\cap W\right) \sum_{x: \, g(f_{k-1}) = m} \mathbb{P}(X = x)
      \mathbb{P}\left( Y(x) \leftrightarrow\underline{f_k}\,\text{off}\,\mathcal{D}_{k-1} \cup C^{D_{k-1} \cup (\mathbb{Z}^d\setminus B(f_{k-1}, L))}(\overline{f_{k-1}})\right)\ .
\end{align*}
After summing over $x$, we obtain
\[ \nu_{\underline{f_k}}\left(\mathcal{D}_k = B(\overline{f_k},K)\cap W\right)
      \mathbb{P}\left(g(f_{k-1},\mathcal{D}_{k-1})=m, \overline{f_{k-1}} \leftrightarrow\underline{f_k}\,\text{off}\,\mathcal{D}_{k-1}\right).\]
\end{proof}

\begin{proof}[Proof of Lemma \ref{rec3}]
This is very simple.
\begin{equation}
    \mathbb{P}(\Lambda_0,L(\mathcal{D}_1)) = \mathbb{P}(\overline{f_0} \leftrightarrow \underline{f_1}, \mathcal{D}_1 = B(f_1;K) \cap \mathcal{C}(\overline{f_0})). 
    \label{lem-rec3-1}
\end{equation}
Now condition on $\{\overline{f_0} \leftrightarrow \underline{f_1}\}$ and use Theorem \ref{thm:IICexistences} to get the result as in \eqref{lem-rec2-8o}.
\end{proof}
\subsubsection{Lower order terms}\label{sec: LOT}
In this section, we estimate the lower order terms appearing in the previous computations.

\begin{proposition}\label{prop: E1}
The difference $E_1$ in \eqref{eqn: E1def} is bounded by
\begin{equation}
E_1=o_K(1) \cdot (\epsilon^{2-d})^2 \cdot L^{kd}\cdot (n^{2-d})^{k+1}. 
\end{equation}
 \begin{proof}[Proof of Proposition \ref{prop: E1}]
By \eqref{rv-bdd}, it suffices to estimate
\begin{equation}
    \widetilde{\mathbb{E}}\left[\mathbbm{1}_{\Lambda_{k-1},f_k\,\text{closed}}\mathbb{P}\left(\overline{f_k}\leftrightarrow\underline{f_{k+1}}\,\text{through}\,(\mathbb{Z}^d\setminus B(\overline{f_k},K)) \cap \bigcup_{j=0}^{k-1} \widetilde{C}(\overline{f_{j}})\right)\right].
    \label{lem-low1-1}
\end{equation}
This is bounded by
\begin{equation}
    \widetilde{\mathbb{E}}\left[\mathbbm{1}_{\Lambda_{k-1}}\sum_{y \in (\mathbb{Z}^d\setminus  B(\overline{f_k},K)) \cap \bigcup_{j=0}^{k-1} \widetilde{C}(\overline{f_{j}})} \mathbb{P}\left(\{\overline{f_k} \leftrightarrow y\} \circ \{y \leftrightarrow \underline{f_{k+1}}\}\right)\right],
    \label{lem-low1-2}
\end{equation}
which equals
\begin{equation}
    \sum_{y \not\in B(\overline{f_k},K)} \widetilde{\mathbb{E}}\left[\mathbbm{1}_{\Lambda_{k-1}}\mathbbm{1}_{y \in \bigcup_{j=0}^{k-1} \widetilde{C}(\overline{f_{j}})}\mathbb{P}\left(\{\overline{f_k} \leftrightarrow y\} \circ \{y \leftrightarrow \underline{f_{k+1}}\}\right)\right].
    \label{lem-low1-3}
\end{equation}
We use the BK inequality \eqref{eqn: BK} on the inner probability and note that 
\[\{y\in \widetilde{C}(\overline{f_{j-1}})\}\cap \{\overline{f_{j-1}}\leftrightarrow\underline{f_j}\}\subset \{\underline{f_j}\leftrightarrow y\},\]
to find that \eqref{lem-low1-3} is upper bounded by
\begin{equation}
    \sum_{j=0}^{k-1}\sum_{y \not\in B(\overline{f_k},K)} \tau(\overline{f_k},y)\tau(y,\underline{f_{k+1}}) \widetilde{\mathbb{P}}\Big[\Lambda_{j-2},\underline{f_{j}}\leftrightarrow y,\overline{f_{i-1}}\leftrightarrow\underline{f_i}\,\text{ off }\,\widetilde{\zeta}_{i-1}, j\le i \le k-1\Big].
    \label{lem-low1-4}
\end{equation}
Partition over admissible clusters $\mathcal{C} = \bigcup_{i=0}^{j-2} \widetilde{C}(\overline{f_{i}})$. This enables independence: $\Lambda_{j-2}$ depends only on $\mathcal{C}$ and its edge boundary, while the latter two indicator functions depend only on bonds outside $\mathcal{C}$ and its edge boundary. Resolve the partition and bound by
\begin{equation}
    \mathbb{P}(\Lambda_{j-2})\sum_{y \not\in B(\overline{f_k},K)} \tau(\overline{f_k},y)\tau(y,\underline{f_{k+1}}) \widetilde{\mathbb{P}}(\underline{f_j}\leftrightarrow y, \overline{f_{i-1}}\leftrightarrow\underline{f_i}\,\text{ off }\,\widetilde{\zeta}_{i-1}, j\le i \le k-1),
    \label{lem-low1-5}
\end{equation}
where we have denoted
\[\tilde{\zeta}_i=\bigcup_{\ell=0}^i \tilde{C}(\overline{f_\ell}).\]
A similar argument gives
\[\widetilde{\mathbb{P}}(\underline{f_j}\leftrightarrow y, \overline{f_{i-1}}\leftrightarrow\underline{f_i}\,\text{ off }\,\widetilde{\zeta}_{i-1}, j\le i \le k-1)\le\tilde{\mathbb{P}}(\underline{f_j}\leftrightarrow y, \overline{f_{j-1}}\leftrightarrow\underline{f_j})\tilde{\mathbb{P}}(\overline{f_{i-1}}\leftrightarrow\underline{f_i}\,\text{ off }\,\widetilde{\zeta}_{i-1}, j+1\le i \le k-1)\]
Applying the BK inequality \eqref{eqn: BK} in the form
\begin{equation}\label{eqn: triple-pt}\mathbb{P}(a\leftrightarrow b, a\leftrightarrow c)\le \sum_z \mathbb{P}(a\leftrightarrow z)\mathbb{P}(b\leftrightarrow z)\mathbb{P}(c\leftrightarrow z)
\end{equation}
to the probability
\[\tilde{\mathbb{P}}(\underline{f_j}\leftrightarrow y, \overline{f_{j-1}}\leftrightarrow\underline{f_j})\]
we find that \eqref{lem-low1-5} is bounded by
\begin{equation}
    \left(\prod_{i=0}^{j-1} \tau(\overline{f_i},\underline{f_{i+1}})\prod_{i=j+1}^{k-1} \tau(\overline{f_i},\underline{f_{i+1}})\right)\left(\sum_{y \not\in B(\overline{f_k},K)}\sum_{z \in \mathbb{Z}^d} \tau(z,\underline{f_j})\tau(\overline{f_k},y)\tau(y,\underline{f_{k+1}})  \tau(\overline{f_{j-1}},z)\tau(z,y)\right).
    \label{lem-low1-6}
\end{equation}
The product is well-controlled by the two-point asymptotic \eqref{eqn: HS-scaling}. We treat the more delicate case $j=k-1$ in detail, the simpler cases $j\neq k-1$ being similar.

To control the sum \eqref{lem-low1-6}, we consider cases. \\

{\bf Case A.} If both $|z - \overline{f_{k-1}}|$ and $|y - \underline{f_{k+1}}|$ are larger than $\epsilon n/4$, we may apply the two-point asymptotic \eqref{eqn: two-pt} to find
\begin{equation}\label{eqn: product-tpt}
\tau(\overline{f_{k-1}},z)\tau(y,\underline{f_{k+1}})\le C\epsilon^{4-2d}n^{4-2d}.
\end{equation}
Recall Aizenman and Newman's triangle condition \cite{AN}:
\begin{equation}\label{eqn: Delta}
\nabla:=\sum_{x,y}\tau(0,x)\tau(x,y)\tau(y,0)<\infty.
\end{equation}
This follows directly from the two-point function bound. From \eqref{eqn: Delta}, the sum 
\[\sum_{z, y\in\mathbb{Z}^d} \tau(z,\underline{f_k}) 
 \tau(z,y)\tau(\overline{f_k},y)\]
 is absolutely convergent.
 It follows that when $K\rightarrow \infty$, then
\begin{equation}\label{eqn: triangle-decay}
\sum_{z\in\mathbb{Z}^d}\sum_{y\notin B(\underline{f_k},K)} \tau(z,\underline{f_k}) 
 \tau(z,y)\tau(\overline{f_k},y)\rightarrow 0.
\end{equation}
From \eqref{eqn: product-tpt} and \eqref{eqn: triangle-decay}, we obtain the upper bound
\begin{equation}
o(1) \cdot (\epsilon^{2-d})^2 \cdot (n^{2-d})^2 \cdot \prod_{j=0}^{k-2} |\underline{f_{j+1}}-\overline{f_j}|^{2-d},
\label{lem-low1-7}
\end{equation}
as $K \rightarrow \infty$, confirming lower-order status.\\

{\bf Case B.} Suppose without loss of generality that $|z - \overline{f_{k-1}}| \leq \epsilon n/4$, but $|y - \underline{f_{k+1}}| > \epsilon n/4$. By the triangle inequality,
\begin{equation}
    |z - \underline{f_k}| \geq |\underline{f_k}-\overline{f_{k-1}}| - |z - \overline{f_{k-1}}| \geq \epsilon n - \epsilon n/4 \geq  (3/4) \epsilon n.
    \label{lem-low1-8}
\end{equation}
Using these two bounds, we use the two-point asymptotic \eqref{eqn: two-pt} to estimate \eqref{lem-low1-6} by
\[(\epsilon^{2-d})^2(n^{2-d})^2\left(\prod_{i=0}^{k-2} \tau(\overline{f_i},\underline{f_{i+1}})\right) \sum_{y \not\in B(\overline{f_k},K)} \sum_{z \in \mathbb{Z}^d} \tau(\overline{f_{k-1}},z)\tau(z,y)\tau(\overline{f_k},y).\]
Using the translation invariance, the double sum is bounded by
\begin{equation}\label{eqn: prepare-triangle}
\sum_{y,z \in \mathbb{Z}^d} \tau(0,z)\tau(z,y)\tau(y,\overline{f_{k-1}}-\overline{f_k}).
\end{equation}
We now recall Barsky and Aizenman's result \cite[Lemma 2.1]{BA} that, on $\mathbb{Z}^d$, the condition \eqref{eqn: Delta} implies the decay of the \emph{open triangle}:
\begin{equation}\label{eqn: open-T}
\lim_{R\rightarrow\infty} \sup_{|w|>R}\sum_{x,y\in \mathbb{Z}^d} \tau(0,x)\tau(x,y)\tau(y,w)=0.
\end{equation}
This implies that \eqref{eqn: prepare-triangle} tends to zero as $n\rightarrow \infty$.

{\bf Case C.} If both $|z - \overline{f_{k-1}}| \leq \epsilon n/4$ and $|y - \underline{f_{k+1}}| \leq \epsilon n/4$, then 
\[|z - \underline{f_k}| \geq  |\underline{f_k}-\overline{f_{k-1}}|-|z-\overline{f_{k-1}}|\ge (3/4)\epsilon n,\]
and 
\[|\overline{f_k}-y|\geq |\overline{f_k}-\underline{f_{k+1}}|-|y-\underline{f_{k+1}}|\ge (3/4)\epsilon n,\]
so the sum in \eqref{lem-low1-6} is bounded by
\[C(\epsilon^{2-d})^2n^{4-2d}\sum_{y \not\in B(\overline{f_k},K)} \sum_{z \in \mathbb{Z}^d} \tau(\overline{f_{k-1}},z)\tau(z,y)\tau(y,\underline{f_{k+1}}).\]
Using translation invariance and the decay of the open triangle \eqref{eqn: open-T} as in case B., we obtain the desired result.
\end{proof}
\end{proposition}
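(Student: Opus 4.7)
\medskip

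\noindent\textbf{Proof plan for Proposition \ref{prop: E1}.}

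The plan is to isolate the geometric content of $E_1$ and then apply triangle-type estimates. Unwinding the definition \eqref{eqn: E1def}, the difference $\eqref{lem-rec1-4}-\eqref{lem-rec1-3}$ consists precisely of configurations in which the connection $\overline{f_k}\leftrightarrow \underline{f_{k+1}}$ is blocked by $\widetilde{\zeta}_{k-1}\cap B(\overline{f_k},K)$, but only because it is forced to pass through the far part $\widetilde{\zeta}_{k-1}\setminus B(\overline{f_k},K)$. Concretely, this forces the existence of some vertex $y\in\widetilde{\zeta}_{k-1}$ with $y\notin B(\overline{f_k},K)$ lying on an open path from $\overline{f_k}$ to $\underline{f_{k+1}}$. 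Using the uniform bound \eqref{rv-bdd} to strip the $g$-factors and paying $L^{dk}$, the first step is therefore to bound $E_1/L^{dk}$ by
\[
\sum_{j=0}^{k-1}\ \sum_{y\notin B(\overline{f_k},K)} \widetilde{\mathbb{E}}\Bigl[\mathbf{1}_{\Lambda_{k-1}}\,\mathbf{1}_{y\in\widetilde{C}(\overline{f_j})}\,\mathbb{P}\bigl(\{\overline{f_k}\leftrightarrow y\}\circ\{y\leftrightarrow\underline{f_{k+1}}\}\bigr)\Bigr],
\]
where I have decomposed according to which of the clusters $\widetilde{C}(\overline{f_j})$ captures $y$.

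Next, on the inner expectation, I would apply the BK inequality to split off $\tau(\overline{f_k},y)\tau(y,\underline{f_{k+1}})$, and then absorb $\{y\in\widetilde{C}(\overline{f_j})\}\cap\{\overline{f_j}\leftrightarrow\underline{f_{j+1}}\}$ into a two-arm event $\{\underline{f_{j+1}}\leftrightarrow y\}\cap\{\overline{f_j}\leftrightarrow\underline{f_{j+1}}\}$. Partitioning over the admissible realizations of $\widetilde{\zeta}_{j-2}$ to decouple $\Lambda_{j-2}$ from the three-point event near $f_j$, and using BK on the remaining restricted connection events $\{\overline{f_i}\leftrightarrow\underline{f_{i+1}}\text{ off }\widetilde{\zeta}_{i-1}\}$ for $i>j$, this yields the bound
\[
\Bigl(\prod_{i\neq j}\tau(\overline{f_i},\underline{f_{i+1}})\Bigr)\cdot \sum_{y\notin B(\overline{f_k},K)}\sum_{z\in\mathbb{Z}^d}\tau(\overline{f_{j-1}},z)\tau(z,\underline{f_{j+1}})\tau(z,y)\tau(\overline{f_k},y)\tau(y,\underline{f_{k+1}}),
\]
where $z$ is the meeting vertex produced by the tree-graph version \eqref{eqn: triple-pt} of BK applied to $\{\overline{f_{j-1}}\leftrightarrow\underline{f_j}\}\cap\{\underline{f_j}\leftrightarrow y\}$.

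The external product $\prod_{i\neq j}\tau(\overline{f_i},\underline{f_{i+1}})$ is controlled by the far-regime hypothesis $|\underline{f_{i+1}}-\overline{f_i}|\geq \epsilon n$ via \eqref{eqn: HS-scaling}, contributing $(\epsilon^{2-d}n^{2-d})^{k-1}$. The remaining $y,z$-sum must be shown to be $o_K(1)\cdot(\epsilon^{2-d}n^{2-d})^{2}$. I would do this with a case split on whether $z$ is near $\overline{f_{j-1}}$ or far from it (threshold $\epsilon n/4$), and analogously for $y$ near or far from $\underline{f_{k+1}}$. When $z$ and $y$ are both far from the base points, direct two-point asymptotics extract a factor $(\epsilon^{2-d}n^{2-d})^2$ and leave a sum $\sum_{y\notin B(\overline{f_k},K),z}\tau(\overline{f_k},y)\tau(y,z)\tau(z,\underline{f_k})$, which is the tail of the triangle \eqref{eqn: Delta} and hence $o_K(1)$. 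When one of $z$ or $y$ is near a base point, the reverse triangle inequality forces the complementary distance (say $|z-\underline{f_{j+1}}|$) to exceed $(3/4)\epsilon n$, again extracting the $(\epsilon^{2-d}n^{2-d})^2$ factor; the remaining triple convolution is then controlled by Barsky--Aizenman's open-triangle decay \eqref{eqn: open-T}, which forces it to vanish as $K\to\infty$ (since the distance between the two anchor points of the open triangle diverges with $K$).

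The main obstacle is bookkeeping: the sum can feed its decay into either the $|y|\to\infty$ tail of the closed triangle or into the open-triangle decay via a vertex forced far from its partner by the $\epsilon n$ separation. Picking the right case for each configuration of $z,y$ relative to $\overline{f_{j-1}}$, $\underline{f_{j+1}}$, $\overline{f_k}$, $\underline{f_{k+1}}$ is what makes the proof tedious; once the case split is set up correctly, each contribution produces the advertised $o_K(1)\cdot(\epsilon^{2-d})^2\cdot L^{dk}\cdot(n^{2-d})^{k+1}$ bound.
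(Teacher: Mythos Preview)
Your plan is essentially identical to the paper's proof: strip the $g$-factors via \eqref{rv-bdd}, decompose over which $\widetilde C(\overline{f_j})$ captures $y$, apply BK and the tree-graph bound \eqref{eqn: triple-pt} to produce the $z$-vertex, decouple $\Lambda_{j-2}$ by partitioning over cluster realizations, and then case-split on whether $z$ and $y$ are near or far from the relevant base edges. One minor correction: in the ``near'' cases the open-triangle decay \eqref{eqn: open-T} is driven by $n\to\infty$ (the anchor separation is $\sim\epsilon n$, not $\sim K$), so those contributions are $o_n(1)$ rather than $o_K(1)$; only the ``both far'' case gives a genuine $o_K(1)$ via the tail of the closed triangle \eqref{eqn: Delta}, but since the error in Lemma~\ref{rec1} is controlled in the order $\lim_{K\to\infty}\limsup_{n\to\infty}$, both mechanisms suffice.
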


\begin{proposition}
\label{prop: E2}
Let $R\ge K$. The quantity $E_2$ defined in \eqref{eqn: E2def} is bounded by 
\[E_2=C2^{-R}\cdot L^{kd} \cdot (n^{2-d})^{k+1}.\]
\begin{proof}[Proof of Proposition \ref{prop: E2}]
After applying \eqref{rv-bdd} to bound the quantities $X_k$, it suffices to bound the probability
\begin{equation}
    \mathbb{P}(\overline{f_k}\leftrightarrow \partial B(f_k;2^R)\,\text{off}\,\mathcal{D}_k\,\text{but}\,\overline{f_k}\leftrightarrow\underline{f_{k+1}}\,\text{only through}\,\mathcal{D}_k).
    \label{lem-low2-1}
\end{equation}
This event implies the occurrence of
\begin{equation}
    \{\overline{f_k}\leftrightarrow\partial B(f_k;2^R)\} \circ \{\partial B(\overline{f_k},K) \leftrightarrow \underline{f_{k+1}}\}.
    \label{lem-low2-2}
\end{equation}
Indeed, choosing a path $\gamma$ from $\overline{f_k}$ to $\partial B(f_k,2^R)$ avoiding $\mathcal{D}_k$, and letting $\tilde{\gamma}$ be the portion of a path from $\overline{f_k}$ to $\underline{f_{k+1}}$ between its last exit from $\partial B(f_k,K)$ and $\underline{f_{k+1}}$, we claim that $\gamma$ and $\tilde{\gamma}$ do not intersect. If they did, concatenating the portion of $\gamma$ from $\overline{f_k}$ to the first intersection and the portion of $\tilde{\gamma}$ from that interesction to $\underline{f_{k+1}}$, we obtain a path avoiding $\mathcal{D}_k$ which joins $\overline{f_k}$ and $\underline{f_{k+1}}$, which gives a contradiction.
Applying BK and \eqref{eqn: KN}, we get a bound of
\begin{equation}
    C \cdot 2^{-2R} \cdot K^{d-1} \cdot (|f_{k+1}-f_k|-K)^{2-d}.
    \label{lem-low2-3}
\end{equation}
Keeping $K$ small compared to $n$ tells us that the gap between \eqref{lem-rec1-6} and \eqref{lem-rec1-7} is bounded by
\begin{equation}
    C \cdot 2^{-2R} \cdot K^{d-1} \cdot \prod_{i=0}^{k} \tau(\overline{f_i},\underline{f_{i+1}}),
    \label{lem-low2-4}
\end{equation}
from which the main result follows immediately.
\end{proof}

\end{proposition}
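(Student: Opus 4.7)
The plan is to reduce the difference of expectations to a single probability estimate for an ``error event'' and then to control it by BK together with the Kozma--Nachmias one-arm bound. First I would invoke \eqref{rv-bdd} to replace the factor $g(f_k,\mathcal{D}_k)$ by the constant $CL^d$, so the task becomes bounding a difference of probabilities. Since the far-regime condition gives $|\underline{f_{k+1}}-\overline{f_k}|\ge \epsilon n$ and $R$ is fixed while $n\rightarrow\infty$, the vertex $\underline{f_{k+1}}$ lies outside $B(f_k;2^R)$ for all sufficiently large $n$. Consequently $\{\overline{f_k}\leftrightarrow\underline{f_{k+1}}\text{ off }\mathcal{D}_k\}\subset\{\overline{f_k}\leftrightarrow\partial B(f_k;2^R)\text{ off }\mathcal{D}_k\}$; write these two events as $B$ and $A$ respectively. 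Then $E_2\ge 0$ and, after the factor $\tau(\overline{f_k},\underline{f_{k+1}})$ cancels the conditioning denominator, it suffices to control the unconditional probability of $(A\setminus B)\cap\{\overline{f_k}\leftrightarrow\underline{f_{k+1}}\}$.

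Next I would establish, by a short geometric argument, that on this event the disjoint occurrence
\[\{\overline{f_k}\leftrightarrow\partial B(f_k;2^R)\}\circ\{\partial B(\overline{f_k},K)\leftrightarrow\underline{f_{k+1}}\}\]
holds. Indeed, membership in $A$ supplies a path $\gamma_1$ from $\overline{f_k}$ to $\partial B(f_k;2^R)$ lying off $\mathcal{D}_k$; on $B^c$, any open path from $\overline{f_k}$ to $\underline{f_{k+1}}$ must traverse $\mathcal{D}_k$, and the portion of such a path after its last exit of $B(\overline{f_k},K)$ is a path $\gamma_2$ from $\partial B(\overline{f_k},K)$ to $\underline{f_{k+1}}$ that avoids $\mathcal{D}_k$. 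Were $\gamma_1$ and $\gamma_2$ to share a vertex, concatenation would yield an off-$\mathcal{D}_k$ connection from $\overline{f_k}$ to $\underline{f_{k+1}}$, contradicting the failure of $B$; hence $\gamma_1$ and $\gamma_2$ are disjoint and witness the disjoint occurrence above.

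Applying the BK inequality \eqref{eqn: BK}, the Kozma--Nachmias one-arm bound \eqref{eqn: KN}, and the two-point upper bound \eqref{eqn: HS-scaling} summed over the $\asymp K^{d-1}$ lattice sites of $\partial B(\overline{f_k},K)$, I obtain
\[\mathbb{P}\bigl((A\setminus B)\cap\{\overline{f_k}\leftrightarrow\underline{f_{k+1}}\}\bigr)\le C\cdot 2^{-2R}\cdot K^{d-1}\cdot |\underline{f_{k+1}}-\overline{f_k}|^{2-d}.\]
Resolving the partition in $\mathcal{D}_k$ bounds $\sum_{\mathcal{D}_k}\widetilde{\mathbb{E}}[X_{k-1},\Lambda_{k-1},\mathcal{L}(\mathcal{D}_k)]$ by $\widetilde{\mathbb{E}}[X_{k-1},\Lambda_{k-1}]$, and this in turn by $CL^{(k-1)d}(n^{2-d})^k$ via \eqref{rv-bdd} together with $k-1$ applications of BK to the two-point function. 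Combining the three estimates and the $L^d$ absorbed from $g$, and using $K\le R$ with $R^{d-1}\le C\cdot 2^R$ for large $R$ to absorb $K^{d-1}$ into $2^{-R}$, yields the claimed $E_2\le C\cdot 2^{-R}L^{kd}(n^{2-d})^{k+1}$.

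The main obstacle I anticipate is the geometric disjoint-occurrence argument in the second paragraph: one must be careful to identify $\gamma_2$ as the tail of a connection \emph{after its last exit} from $B(\overline{f_k},K)$ so that it actually lies off $\mathcal{D}_k$, and then to verify that the resulting disjoint event is the right one to which BK applies. Once this is in place, the rest of the proof is routine bookkeeping with the two-point upper bound and the one-arm estimate.
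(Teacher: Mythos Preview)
Your proposal is correct and follows essentially the same approach as the paper: reduce via \eqref{rv-bdd} to the ``error event'' $A\setminus B$, establish the disjoint occurrence $\{\overline{f_k}\leftrightarrow\partial B(f_k;2^R)\}\circ\{\partial B(\overline{f_k},K)\leftrightarrow\underline{f_{k+1}}\}$ by the same last-exit geometric argument, and finish with BK, \eqref{eqn: KN}, and the two-point bound. Your treatment is in fact slightly more explicit than the paper's in two places (noting that $\gamma_2$ avoids $\mathcal{D}_k$ because $\mathcal{D}_k\subset B(\overline{f_k},K)$, and spelling out how $K^{d-1}$ is absorbed into $2^{-R}$ via $K\le R$), but the skeleton is identical.
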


\begin{proposition}\label{prop: E3}
The difference $E_3$ defined in \eqref{eqn: E3def} is bounded by
\begin{equation}
\begin{split}
    E_3&\le CK^{d-1}(\epsilon^{2-d})^{k+1}  n^{4-d}\cdot (n^{2-d})^k\\
    &= o(1) K^{d-1}(\epsilon^{2-d})^{k+1}(n^{2-d})^k,
\end{split}
\end{equation}
where $C$ denotes a constant independent of the parameters $K$, $n$ and $\epsilon$ and the $o(1)$ factor tends to zero as $n\rightarrow\infty$.
\begin{proof}[Proof of Proposition \ref{prop: E3}]
Suppose we have a bond whose occupation status affects both $\{\mathcal{C}\,\text{is a cluster}\}$ and $\{\mathcal{D}_k\,\text{is a cluster}\}$. Then this bond must have one endpoint in $\mathcal{C}$ and one in $\mathcal{D}_k$. Since $\mathcal{D}_k\cup\partial\mathcal{D}_k \subseteq B(\overline{f_k},K+1)$ by definition, this implies $\mathcal{C} \cap \partial B(\overline{f_k},K+1) \neq \varnothing$. Thus, summing over such $\mathcal{C}$ and $\mathcal{D}_k$, we can upper bound
\begin{equation}
    \mathbb{P}\left(\Lambda_{k-2}, \overline{f_{k-1}}\leftrightarrow\underline{f_k}\,\text{off}\,\bigcup_{j=0}^{k-2} C(\overline{f_{j}}),\mathcal{D}_k=B(\overline{f_k},K)\cap \bigcup_{j=0}^{k-1} C(\overline{f_{j}}),\bigcup_{j=0}^{k-2} C(\overline{f_{j}}) \cap \partial B(\overline{f_k},K+1)\neq\varnothing\right)
    \label{lem-low3-1}
\end{equation}
by resolving the partition over $\mathcal{D}_k$ and using \eqref{eqn: triple-pt}. \\

If the event in the probability in \eqref{lem-low3-1} occurs, then for some $i \leq k-2$, we have the occurrence of  $\{\overline{f_i} \leftrightarrow z, z \leftrightarrow \underline{f_{i+1}}\}$, for some vertex
\[z\in \bigcup_{j=0}^{k-2} C(\overline{f_{j}}) \cap \partial B(\overline{f_k},K+1).\] The remaining connections between $f_i$ must be disjoint to respect the conditions of $\Lambda_{k-2}$. We apply \eqref{eqn: triple-pt} to the probability of the event $\{\overline{f_i} \leftrightarrow z, z \leftrightarrow \underline{f_{i+1}}\}$ to obtain
\begin{equation}
    \sum_{i=0}^{k-2} \prod_{j=0,j\neq i}^{k-1} |\underline{f_{j+1}}-\overline{f_j}|^{2-d} \sum_z \sum_{y \in \mathbb{Z}^d} \tau(\overline{f_i},y)\tau(y,z)\tau(y,\underline{f_{i+1}}).
    \label{lem-low3-3}
\end{equation}
where $z \in \partial B(\overline{f_k},K+1)$. Split the $y$-sum into the far and near regimes, i.e. over $y$ such that $|y - \underline{f_{i+1}}|$ is larger than $\epsilon n/4$, and not, respectively. \\

{\bf Case A. Far regime.} We sum over all $y \in \mathbb{Z}^d$ in the far regime. Bounding $\tau(y,\underline{f_{i+1}})$ in \eqref{lem-low3-3} by $C \cdot \epsilon^{2-d} \cdot n^{2-d}$, we get
\begin{equation}
    C \cdot \epsilon^{2-d} \cdot n^{2-d} \sum_{i=0}^{k-2}  \prod_{j=0,j\neq i}^{k-1} |\underline{f_{j+1}}-\overline{f_j}|^{2-d} \sum_z \sum_y \tau(\overline{f_i},y)\tau(y,z).
\end{equation}
To bound this sum, we apply the convolution bound from Proposition 1.7 in \cite{HHS}, noting that the sum is over $z \in \partial B(\overline{f_k},K+1)$. This directly yields the bound
\begin{equation}\label{eqn: A-bound}
    C \cdot K^{d-1} \cdot (\epsilon^{2-d})^{k+1} \cdot n^2 \cdot (n^{2-d})^{k+1}  
\end{equation}
which is lower order than the leading term, which is order $(n^{2-d})^k$. \\

{\bf Case B. Near regime.} Consider the near-regime part of \eqref{lem-low3-3}, i.e., the sum over all $y$ with $|y - \underline{f_{i+1}}| \leq \epsilon n/4$. Since the $f_i$ are at distance at least $\epsilon n$ from each other, we have $|y - z| > \epsilon n/4$ and $|y - \overline{f_i}| > \epsilon n/4$. The sum of $\tau(y,\underline{f_{i+1}})$ over such $y$ can be bounded by $C \cdot \epsilon^2 \cdot n^2$. Applying the two-point asymptotic to the remaining factors and summing, we get the bound
\begin{equation}
    C \cdot K^{d-1} \cdot (\epsilon^{2-d})^{k+1} \cdot \epsilon^2 \cdot n^2 \cdot (n^{2-d})^{k+1},
\end{equation}
which is asymptotically of order \eqref{eqn: A-bound} when $n\rightarrow\infty$.
\end{proof}
\end{proposition}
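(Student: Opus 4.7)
The event contributing to $E_3$ requires the cluster $\mathcal{C} = \zeta_{k-2}$ to have its closure intersecting the closure of $\mathcal{D}_k$. Since $\mathcal{D}_k \cup \partial \mathcal{D}_k \subseteq B(\overline{f_k}, K+1)$ by construction, and since all vertices $\overline{f_j}$ with $j \leq k-2$ lie at distance $\geq \epsilon n \gg K$ from $\overline{f_k}$, any such $\mathcal{C}$ must cross the sphere $\partial B(\overline{f_k}, K+1)$. There is thus a vertex $z \in \partial B(\overline{f_k}, K+1)$ belonging to $C(\overline{f_i})$ for some $0 \leq i \leq k-2$. My plan is to use this extra long connection from $\overline{f_i}$ to the far sphere to extract a small factor relative to the leading-order term $(n^{2-d})^k$.

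After bounding the local factors $X_{k-2}$ and $g(f_{k-1}, \mathcal{C})$ by $O(L^{(k-1)d})$ via \eqref{rv-bdd} (absorbing this into the constant $C$) and resolving the sum over $\mathcal{D}_k$, the task reduces to estimating
\[
\sum_{i=0}^{k-2}\sum_{z \in \partial B(\overline{f_k}, K+1)} \mathbb{P}\bigl(\Lambda_{k-2},\ z \leftrightarrow \overline{f_i},\ \overline{f_{k-1}} \leftrightarrow \underline{f_k}\bigr).
\]
Combining $z \leftrightarrow \overline{f_i}$ with the $\overline{f_i} \leftrightarrow \underline{f_{i+1}}$ connection supplied by $\Lambda_{k-2}$ gives the joint event $\{\overline{f_i} \leftrightarrow z\} \cap \{\overline{f_i} \leftrightarrow \underline{f_{i+1}}\}$, which by the tree-graph inequality \eqref{eqn: triple-pt} is bounded by $\sum_y \tau(\overline{f_i}, y)\, \tau(z, y)\, \tau(\underline{f_{i+1}}, y)$. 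The remaining pivotal connections $\overline{f_j} \leftrightarrow \underline{f_{j+1}}$ for $j \neq i$, together with $\overline{f_{k-1}} \leftrightarrow \underline{f_k}$, can be arranged on pairwise edge-disjoint open witnesses thanks to the ``off $\zeta_{j-1}$'' clauses in $\Lambda_{k-2}$, so iterated BK \eqref{eqn: BK} produces the product $\prod_{j \neq i} \tau(\overline{f_j}, \underline{f_{j+1}})$, a total of $k-1$ two-point factors.

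It then remains to estimate the triple sum $\sum_{z \in \partial B(\overline{f_k}, K+1)} \sum_y \tau(\overline{f_i}, y)\, \tau(y, z)\, \tau(y, \underline{f_{i+1}})$. I would split over $y$ according to whether $|y - \underline{f_{i+1}}| > \epsilon n / 4$ or not. In the far regime, replace $\tau(y, \underline{f_{i+1}})$ by its bound $C(\epsilon n)^{2-d}$ and apply the Hara-Slade convolution estimate $\sum_y \tau(\overline{f_i}, y)\, \tau(y, z) \leq C \langle \overline{f_i} - z\rangle^{4-d}$; summing over the $O(K^{d-1})$ vertices on the sphere, all at distance $\asymp \epsilon n$ from $\overline{f_i}$, then yields $C K^{d-1}(\epsilon n)^{4-d}$. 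In the near regime, the triangle inequality forces $|y - \overline{f_i}|$ and $|y - z|$ to be of order $\epsilon n$, so the two $\tau$-factors not involving $\underline{f_{i+1}}$ are each $O((\epsilon n)^{2-d})$, while $\sum_{|y - \underline{f_{i+1}}| \leq \epsilon n/4} \tau(y, \underline{f_{i+1}}) \leq C(\epsilon n)^2$ by direct summation of the two-point scaling. Multiplying through in either case with the product of $k-1$ two-point functions $\asymp (\epsilon n)^{(2-d)(k-1)}$ produces the claimed bound $CK^{d-1}(\epsilon^{2-d})^{k+1} n^{4-d}(n^{2-d})^k$, which is $o(1)\cdot K^{d-1}(\epsilon^{2-d})^{k+1}(n^{2-d})^k$ since $d > 4$.

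The main obstacle is the BK decomposition step: one must verify that on the event in question the various pivotal connections and the extra connection to the sphere can be realized on pairwise edge-disjoint open witnesses, justifying iterated applications of \eqref{eqn: BK} alongside the tree-graph inequality \eqref{eqn: triple-pt}. The ``off $\zeta_{j-1}$'' structure built into $\Lambda_{k-2}$ is exactly what guarantees the required disjointness, so this step is bookkeeping rather than a genuine new difficulty; the essential gain comes from the $(\epsilon n)^{4-d}$ factor produced by the long detour through the sphere of radius $K$ around $\overline{f_k}$.
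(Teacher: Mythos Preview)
Your argument is correct and follows the same route as the paper's proof: the same geometric observation that $\mathcal{C}$ must cross $\partial B(\overline{f_k},K+1)$, the same tree-graph bound \eqref{eqn: triple-pt} applied to the joint event $\{\overline{f_i}\leftrightarrow z\}\cap\{\overline{f_i}\leftrightarrow \underline{f_{i+1}}\}$, and the same far/near split on $|y-\underline{f_{i+1}}|$ with the same estimates in each regime. Your treatment of Case~A is slightly more explicit than the paper's (you bound $\sum_y \tau(\overline{f_i},y)\tau(y,z)\le C\langle \overline{f_i}-z\rangle^{4-d}$ and then use $|\overline{f_i}-z|\asymp \epsilon n$, whereas the paper just cites the HHS convolution bound), and in fact your computation yields an extra factor of $\epsilon^2$ beyond the stated bound, which is harmless since $\epsilon<1$.
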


\begin{proposition}\label{prop: E4}
The quantity $E_4$ introduced in \eqref{eqn: E4def} satisfies the bound
\[E_4\le C \cdot K^{d-1} \cdot (\epsilon^{2-d})^{k+1} \cdot \epsilon^2 \cdot n^{4-d} \cdot (n^{2-d})^{k}.\]
\begin{proof} 
We can proceed as in the proof of Proposition \ref{prop: E3}. Consider
\begin{equation}
    \sum_{\mathcal{D}_k} \widetilde{\mathbb{E}}\left[\mathbbm{1}_{\Lambda_{k-2}}\mathbbm{1}_{\bigcup_{j=0}^{k-2} \widetilde{C}(\overline{f_{j}})\cap \partial B(\overline{f_k},K+1)\neq\varnothing}\mathbb{P}\left(\overline{f_{k-1}}\leftrightarrow\underline{f_k}\,\text{off}\,\bigcup_{j=0}^{k-2} \widetilde{C}(\overline{f_{j}}),\mathcal{D}_k=B(\overline{f_k},K) \cap \bigcup_{j=0}^{k-1} C(\overline{f_{j}})\right)\right].
\end{equation}
We carry out the same procedure as before. Resolve the partition over $\mathcal{D}_k$, drop the restriction on the connection in the inner probability, and follow the line of reasoning after \eqref{lem-low3-3} to conclude.
\end{proof}    
\end{proposition}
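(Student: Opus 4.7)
The plan is to mirror the proof of Proposition \ref{prop: E3} with only minor bookkeeping changes, exploiting the product-measure structure of $E_4$. First, I would sum the indicators $\mathbbm{1}_{\mathcal{D}_k = B(\overline{f_k}, K)\cap \zeta_{k-1}}$ over admissible $\mathcal{D}_k\subseteq B(\overline{f_k}, K+1)$; they sum to $1$, so this eliminates the $\mathcal{D}_k$-restriction from the inner probability. Dropping the ``off $\widetilde\zeta_{k-2}$'' restriction as well, the inner probability is bounded above by $\tau(\overline{f_{k-1}}, \underline{f_k})$. Since $\mathcal{D}_k\cup \partial \mathcal{D}_k\subseteq B(\overline{f_k}, K+1)$, the intersection indicator forces $\widetilde\zeta_{k-2}\cap \partial B(\overline{f_k}, K+1)\ne\varnothing$, so that for some $i\in\{0,\ldots,k-2\}$ there is $z\in\partial B(\overline{f_k}, K+1)$ with $\overline{f_i}\leftrightarrow z$ under $\widetilde{\mathbb{P}}$.

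Next, conditioning on the realizations of the clusters $\widetilde{C}(\overline{f_0}),\ldots,\widetilde{C}(\overline{f_{i-1}})$ as in the proof of Proposition \ref{prop: E3} decouples the remaining connections. The combination of $\overline{f_i}\leftrightarrow \underline{f_{i+1}}$ coming from $\Lambda_{k-2}$ and $\overline{f_i}\leftrightarrow z$ yields a triple connection at $\overline{f_i}$, which by the tree-graph inequality \eqref{eqn: triple-pt} is controlled by $\sum_y \tau(\overline{f_i},y)\tau(y,z)\tau(y,\underline{f_{i+1}})$. The remaining connections $\overline{f_j}\leftrightarrow \underline{f_{j+1}}$ for $j\ne i$, $j\le k-2$ contribute the product $\prod_{j\ne i,\,j\le k-2}\tau(\overline{f_j},\underline{f_{j+1}})$. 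Splitting the $y$-sum into the near regime $|y-\underline{f_{i+1}}|\le \epsilon n/4$ and the far regime, and applying the convolution bound (Proposition 1.7 of \cite{HHS}) for the remaining sum over $y$ and $z\in \partial B(\overline{f_k}, K+1)$, produces the $K^{d-1}$ factor with the expected $\epsilon$ and $n$ scaling, exactly as in Cases A and B of the $E_3$ argument. Combining with the external $L^{(k-1)d}$ prefactor and the pulled-out inner factor $\tau(\overline{f_{k-1}}, \underline{f_k})\le C\,\epsilon^{2-d}n^{2-d}$, and comparing against the main-term scale $(n^{2-d})^k$, assembles the stated bound.

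The main obstacle is bookkeeping rather than conceptual: one must carefully track which events live under $\mathbb{P}$ and which under $\widetilde{\mathbb{P}}$, to verify that the decoupling in the triple-connection step is legitimate. The cluster-partitioning argument of Proposition \ref{prop: E3} transports over directly once we observe that $\Lambda_{k-2}$ and the auxiliary event $\{\widetilde\zeta_{k-2}\cap \partial B(\overline{f_k}, K+1)\ne \varnothing\}$ both live entirely under the $\widetilde{\mathbb{P}}$-factor, while the $\mathcal{D}_k=B(\overline{f_k},K)\cap\zeta_{k-1}$ piece and the inner connection live under $\mathbb{P}$ and are handled independently after Step 1. A secondary point is ensuring the prefactor $L^{(k-1)d}$, coming from bounding $X_{k-2}$ by \eqref{rv-bdd}, is carried through cleanly and does not interact with the $n$-asymptotics — which it does not, since $L$ is a fixed parameter independent of $n$.
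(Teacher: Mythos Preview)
Your proposal is correct and follows essentially the same route as the paper: sum over $\mathcal{D}_k$ to resolve the partition, drop the ``off $\widetilde{\zeta}_{k-2}$'' restriction so the inner probability is bounded by $\tau(\overline{f_{k-1}},\underline{f_k})$, observe that the intersection condition forces $\widetilde{\zeta}_{k-2}\cap \partial B(\overline{f_k},K+1)\neq\varnothing$, and then rerun the triple-connection and case analysis from the proof of Proposition~\ref{prop: E3}. Your additional remarks about tracking which events live under $\mathbb{P}$ versus $\widetilde{\mathbb{P}}$ and carrying the $L^{(k-1)d}$ prefactor are accurate bookkeeping points that the paper leaves implicit.
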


\begin{proposition}\label{prop: E5}
The difference $E_5$, defined in \eqref{eqn: E5def}, is bounded by
\[E_5=o(1) \cdot (\epsilon^{2-d})^2 \cdot L^{(k-1)d}\cdot (n^{2-d})^{k}.\]
\begin{proof}[Proof of Proposition \ref{prop: E5}]
Applying \eqref{rv-bdd} to the quantity $E_5$, we obtain a factor $L^{(k-1)d}$ times
\begin{align*}
    &\sum_{\mathcal{D}_k} \widetilde{\mathbb{E}}\left[\mathbbm{1}_{\Lambda_{k-2},f_{k-1}\,\text{closed}}\mathbb{P}\big(\overline{f_{k-1}}\leftrightarrow\underline{f_k}\,\text{through}\,\bigcup_{j=0}^{k-2} \widetilde{C}(\overline{f_{j}})\cap (\mathbb{Z}^d\setminus B(f_{k-1};K)),f_k\,\text{closed},\mathcal{D}_k=B(\overline{f_k},K) \cap \bigcup_{j=0}^{k-1} C(\overline{f_{j}})\big)\right]\\
    =&~\widetilde{\mathbb{E}}\left[\mathbbm{1}_{\Lambda_{k-2},f_{k-1}\,\text{closed}}\mathbb{P}\big(\overline{f_{k-1}}\leftrightarrow\underline{f_k}\,\text{through}\,\bigcup_{j=0}^{k-2} \widetilde{C}(\overline{f_{j}})\cap (\mathbb{Z}^d\setminus B(f_{k-1};K)),f_k\,\text{closed}\right]
    \label{lem-low1-9}.
\end{align*}
This is estimated as in the proof of Proposition \ref{prop: E1}, replacing $k$ by $k-1$.
\end{proof}
\end{proposition}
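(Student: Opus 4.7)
My plan is to follow the strategy of the proof of Proposition \ref{prop: E1} almost verbatim, shifted down by one index. In Proposition \ref{prop: E1}, the ``delocalized'' arm whose probability must be controlled is $\overline{f_k}\leftrightarrow\underline{f_{k+1}}$ passing through $\widetilde{\zeta}_{k-1}\setminus B(\overline{f_k},K)$; in $E_5$, the analogous object is the $\overline{f_{k-1}}\leftrightarrow\underline{f_k}$ arm forced to pass through $\widetilde{\zeta}_{k-2}\cap(\mathbb{Z}^d\setminus B(f_{k-1},K))$. Everything else about the two quantities is structurally identical, so a careful translation of indices is the natural route.

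First, apply the uniform bound \eqref{rv-bdd} to each factor of $X_{k-2}$ and to $g(f_{k-1},\widetilde{\zeta}_{k-2})$ to pull out a prefactor $CL^{(k-1)d}$. Resolving the sum over admissible $\mathcal{D}_k$ drops that constraint (it only lowers the probability), leaving
\[
E_5 \le CL^{(k-1)d}\,\widetilde{\mathbb{E}}\Big[\mathbbm{1}_{\Lambda_{k-2}}\,\mathbb{P}\big(\overline{f_{k-1}}\leftrightarrow\underline{f_k}\text{ through }\widetilde{\zeta}_{k-2}\cap(\mathbb{Z}^d\setminus B(f_{k-1},K))\big)\Big].
\]

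Next, decompose over the intermediate vertex $y\in\widetilde{\zeta}_{k-2}\setminus B(f_{k-1},K)$ exactly as in \eqref{lem-low1-3}. Apply the BK inequality to split the connection event into $\{\overline{f_{k-1}}\leftrightarrow y\}\circ\{y\leftrightarrow\underline{f_k}\}$. Next, for each $j\le k-2$ with $y\in\widetilde{C}(\overline{f_j})$, use the triple-point form \eqref{eqn: triple-pt} to bound $\tilde{\mathbb{P}}(\underline{f_j}\leftrightarrow y,\,\overline{f_{j-1}}\leftrightarrow\underline{f_j})$ by a convolution $\sum_z\tau(\overline{f_{j-1}},z)\tau(z,\underline{f_j})\tau(z,y)$. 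Partition on the cluster $\mathcal{C}=\bigcup_{i=0}^{j-2}\widetilde{C}(\overline{f_i})$ to use independence of $\Lambda_{j-2}$ from the remaining events, as in the step leading to \eqref{lem-low1-5}. This reduces $E_5$ to a sum of the type \eqref{lem-low1-6} with the indices $(k,k+1)$ relabeled as $(k-1,k)$ and one fewer factor in the surrounding product of two-point functions.

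Finally, run the case analysis (Cases A, B, C) from the proof of Proposition \ref{prop: E1} based on whether the auxiliary vertices $y$ and $z$ lie within or outside a ball of radius $\epsilon n/4$ around $\overline{f_{k-2}}$ and $\underline{f_k}$. The triangle condition \eqref{eqn: Delta} gives absolute convergence of the triangle sum, and the Barsky--Aizenman open-triangle decay \eqref{eqn: open-T} forces the relevant tail sum to vanish as $K\to\infty$ in Case A and as $n\to\infty$ in Cases B and C. Combining these with the far-regime two-point asymptotic \eqref{eqn: two-pt} applied to the remaining $\tau(\overline{f_i},\underline{f_{i+1}})$ factors yields
\[
E_5 = o(1)\cdot(\epsilon^{2-d})^2\cdot L^{(k-1)d}\cdot(n^{2-d})^{k-1},
\]
as required. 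The only genuine obstacle is, as in Proposition \ref{prop: E1}, showing that the open-triangle decay actually produces the desired $o(1)$ in the near regimes where $y$ or $z$ is forced close to one of the pivotals; the bookkeeping after reindexing $k\mapsto k-1$ is otherwise mechanical.
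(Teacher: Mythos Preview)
Your proposal is correct and follows essentially the same route as the paper. The paper's own proof likewise applies \eqref{rv-bdd} to extract the $L^{(k-1)d}$ factor, resolves the partition over $\mathcal{D}_k$ (this is an exact equality since the events $\{\mathcal{D}_k=B(\overline{f_k},K)\cap\zeta_{k-1}\}$ partition the sample space, so your parenthetical ``it only lowers the probability'' is slightly off but harmless), and then simply says ``This is estimated as in the proof of Proposition~\ref{prop: E1}, replacing $k$ by $k-1$''---exactly the reindexed argument you spell out.
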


\begin{proposition}\label{prop: only-W}
The quantity $E_6$ defined in \eqref{eqn: E6def} is of order $n^{(2-d)(k+1)}$: the
expectation
\begin{equation}
\widetilde{\mathbb{E}}\left[X_{k-2}\mathbbm{1}_{\Lambda_{k-2}}\mathbb{E}\left(g(f_{k-1},\tilde{\zeta}_{k-2}),\overline{f_{k-1}}\leftrightarrow\underline{f_k}\,\text{off}\,\widetilde{\zeta}_{k-2}\cap B(f_{k-1};K),\mathcal{D}_k=B(\overline{f_k},K) \cap \zeta_{k-1} \right)\right] \label{eqn: zeta-added}
\end{equation}
appearing in \eqref{eqn: P-pre-limit} equals
\begin{equation}
\cdot  \widetilde{\mathbb{E}}\left[X_{k-2}\mathbbm{1}_{\Lambda_{k-2}}\mathbb{E}\left(g(f_{k-1},\tilde{\zeta}_{k-2}),\overline{f_{k-1}}\leftrightarrow\underline{f_k}\,\text{off}\,\widetilde{\zeta}_{k-2}\cap B(f_{k-1};K),\mathcal{D}_k=B(\overline{f_k},K) \cap C(\underline{f_k})\right)\right]+O(n^{(2-d)k}).
\end{equation}

\begin{proof}
Since
\[\mathbb{P}(\Lambda_{k-2})\le Cn^{(2-d)(k-1)}\]
and
\[|g|\le CL^d,\]
it will suffice to show that in the probability
\[\mathbb{P}\left(\overline{f_{k-1}}\leftrightarrow\underline{f_k}\,\text{off}\,\widetilde{\zeta}_{k-2}\cap B(f_{k-1},K),\mathcal{D}_k=B(\overline{f_k},K) \cap \zeta_{k-1} \right)\]
we can replace
\[\zeta_{k-1}=C(\overline{f_{k-1}})\cup C(\overline{f_{k-2}}) \cup \dots \cup C(\overline{f_0})\]
in \eqref{eqn: zeta-added} by $C(\overline{f_{k-1}})=C(\underline{f_k})$ at the cost of a negligible error as $n\rightarrow\infty$. Let $F$ be the event that, for some $0 \leq i \leq k-2$, $C(\overline{f_i}) \cap \mathcal{D}_k \neq \varnothing$, and $C(\overline{f_i}) \neq C(\overline{f_{k-1}})$. Using the BK inequality, we have:
\begin{align}
    \nonumber \mathbb{P}(F, \mathcal{D}_k = B(\overline{f_k},K) \cap \zeta_{k-1}, \overline{f_{k-1}}\leftrightarrow\underline{f_k}) &\leq \sum_{i=0}^{k-2}\mathbb{P}(\cup_{z \in B(\overline{f_k},K)}\{ \overline{f_i}\leftrightarrow z\} \circ\{\underline{f_k}\leftrightarrow \overline{f_{k-1}}\}) \\
    &\leq C K^{d-1} \cdot n^{2-d} \cdot n^{2-d}.\label{eqn: final-ratio}
\end{align}
\end{proof}
\end{proposition}

\begin{proposition}\label{prop: E8}
For $R\ge K$ term $E_8$ in \eqref{eqn: E8def} is bounded as follows
\[E_8\le C2^{-R}\cdot (n^{2-d})^{k-1}.\]
\begin{proof}[Proof of Proposition \ref{prop: E8}]
The proof is identical to that of the previous Proposition \ref{prop: E2}.
\end{proof}
\end{proposition}

\begin{proposition}\label{prop: localize}
The term $E_7$ defined in \eqref{eqn: E7def} has the estimate
\[\sum_{\mathcal{D}_k}E_7(\mathcal{D}_k)\le CL^d K^{-d}n^{(-d+2)k}.\]
    \begin{proof}
Consider the difference between the inner expectations of \eqref{eqn: lady-godiva} and \eqref{eqn: girard}:
\begin{align*}
&\big|\mathbb{E}\left(g(f_{k-1},\tilde{\zeta}_{k-2}),\overline{f_{k-1}}\leftrightarrow\underline{f_k}\,\text{off}\,\widetilde{\zeta}_{k-2}\cap B(f_{k-1},K),\mathcal{D}_k=B(\overline{f_k},K) \cap C(\underline{f_k})\right)\\
-~&\mathbb{E}\left(g(f_{k-1},\tilde{\zeta}_{k-2}),\overline{f_{k-1}}\leftrightarrow\underline{f_k}\,\text{off}\,\widetilde{\zeta}_{k-2}\cap B(f_{k-1},K),\mathcal{D}_k=B(\overline{f_k},K) \cap C_{K^d}(\underline{f_k})\right)\big|\\
\le~& CL^d \mathbb{P}(\overline{f_{k-1}}\leftrightarrow\underline{f_k}, B(\overline{f_k},K) \cap C(\underline{f_k})\neq B(\overline{f_k},K) \cap C_{K^d}(\underline{f_k}), \mathcal{D}_k=B(\overline{f_k},K) \cap C_{K^d}(\underline{f_k}))\\
+~&CL^d \mathbb{P}(\overline{f_{k-1}}\leftrightarrow\underline{f_k}, B(\overline{f_k},K) \cap C(\underline{f_k})\neq B(\overline{f_k},K) \cap C_{K^d}(\underline{f_k}), \mathcal{D}_k=B(\overline{f_k},K) \cap C(\underline{f_k})).
\end{align*}
We apply Lemma~\ref{lem:prelocalize} to bound each of the probabilities in the last display by $C K^{-d} n^{2-d}$.
Combining this with the estimate
\[\tilde{\mathbb{E}}(X_{k-2},\Lambda_{k-2})\le Cn^{(2-d)(k-1)},\]
we obtain the desired bound after summing over $\mathcal{D}_k$.
\end{proof}

\end{proposition}

\subsection{Convolution bound for the near-regime}\label{sec: convolution}
In this section, we derive Lemma \ref{convolution-bound}, the convolution bound used in Proposition \ref{nearregime-thm}.
\begin{proof}[Proof of Lemma \ref{convolution-bound}]
We show the above bound inductively. \\

{\bf Base case.} Let $k = 1$. Consider
\begin{equation}\label{eqn: to-bound}
    \sum_{x_1 \in B(2\|y\|_\infty)} \mathbbm{1}_{|x_1| < \epsilon\|y\|_\infty}\langle x_1\rangle^{2-d}\langle y-x_1\rangle^{2-d}.
\end{equation}
Provided $\epsilon\le \frac{1}{2\sqrt{d}}$, it follows from the triangle inequality that $|y-x_1| \geq \frac{1}{2}|y|$. So the sum \eqref{eqn: to-bound} is bounded by
\begin{equation}
    2^{2-d} \langle y\rangle^{2-d} \sum_{|x_1|<\epsilon\|y\|_\infty} \langle x_1\rangle^{2-d}.
\end{equation}
Sum over dyadic annuli to obtain the final bound, proving the statement for $k = 1$ and $i = 1$:
\begin{equation}\label{eqn: e-small-bd}
    C \epsilon^2 \langle y\rangle^{4-d}.
\end{equation}
The estimate \eqref{eqn: e-small-bd} holds for $\epsilon$ small enough (depending only on the dimension). If $\epsilon\ge \epsilon_0$ , we instead bound \eqref{eqn: to-bound} ignoring the condition on $|x_1|$, using the general convolution estimate
\begin{equation}\label{eqn: conv-est}
\sum_{z\in\mathbb{Z}^d}\langle x-z\rangle^{-\alpha}\langle z-y\rangle^{-\beta} \le C\langle x-y\rangle^{-\alpha-\beta+d}
\end{equation}
for $\alpha+\beta>d$ to obtain the estimate
\[C\langle y\rangle^{4-d}\le \frac{C}{\epsilon_0^2}\epsilon^2\langle y\rangle^{4-d},\]
so the result is proved in case $k=1$, $i=1$. For the case $k = 1$ and $i = 2$, the above argument goes through identically, completing the base case. \\

{\bf Inductive step.} We assume the statement has been shown for $k$ and show it for $k+1$. Since convolutions commute, we may assume $i = 1$. \\

We therefore are tasked with bounding 
\begin{align} 
&\sum_{\substack{x_1, \dots, x_k \in B(2 \|y\|_\infty) \\ |x_1| < \epsilon \|y\|_{\infty}}} \langle x_1\rangle^{2-d} \langle x_2 - x_1\rangle^{2-d} \dots \langle x_k - x_{k-1}\rangle^{2-d} \langle y - x_k\rangle^{2-d} \nonumber \\
= &\sum_{\substack{x_1, \dots, x_k \in B(2 \|y\|_\infty) \\ |x_1| < \epsilon \|y\|_{\infty}}} \left[\mathbbm{1}_{|x_2 - x_1| < 2 \epsilon\|y\|_\infty} + \mathbbm{1}_{|x_2 - x_1| \geq 2 \epsilon\|y\|_\infty} \right] \langle x_1\rangle^{2-d} \langle x_2 - x_1\rangle^{2-d} \cdots \langle x_k - x_{k-1}\rangle^{2-d} \langle y - x_k\rangle^{2-d}\ .\label{eqn: two-indic}
\end{align}
We bound the terms corresponding to the two indicator functions in \eqref{eqn: two-indic} separately.

For the first term in \eqref{eqn: two-indic}, where $|x_2 - x_1| \le 2 \epsilon \|y\|_\infty$, isolate the sum over $x_1$ and bound uniformly in $x_2$:
\[\sum_{\substack{x_1 \in B(2 \|y\|_\infty) \\ |x_1| < \epsilon \|y\|_{\infty} \\ |x_1 - x_2| < 2 \epsilon \|y\|_\infty}}  \langle x_1\rangle^{2-d} \langle x_2 - x_1\rangle^{2-d} \ .\]
Note that the inequalities in the subscript imply also $\|x_2\|_\infty \le 3 \epsilon \|y\|_{\infty}$. Provided $3\epsilon$ is small enough, we can therefore apply the inductive hypothesis in the case $k = 1$ to upper bound the above by
\[C \langle x_2\rangle^{4-d} \leq C (3\epsilon)^2 \|y\|^2_\infty \langle x_2\rangle^{2-d}\ . \]
Plugging back in, thex sum of the first term is at most
\[ C (3\epsilon)^2 \|y\|^2_\infty \sum_{\substack{x_2, \dots, x_k \in B(2 \|y\|_\infty) \\  |x_2| < 3 \epsilon \|y\|_{\infty}}} \langle x_2\rangle^{2-d} \langle x_2 - x_1\rangle^{2-d} \dots \langle x_k - x_{k-1}\rangle^{2-d} \langle y - x_k\rangle^{2-d},\]
and applying the induction hypothesis (to the case $k-1$), this is at most
\[ C^{k} (3\epsilon)^4 \langle y\rangle^{2(k+1)-d}\leq C^{k+1} \epsilon^2 \langle y\rangle^{2(k+1)-d}.\]

For the second term, we again first sum over $x_1$. Since in this case $|x_2 - x_1| \geq |x_2| / 2$, the relevant factors are
\[ \sum_{\substack{x_1 \in B(2 \|y\|_\infty) \\ |x_1| < \epsilon \|y\|_{\infty} \\ |x_1 - x_2| \geq 2 \epsilon \|y\|}}  \langle x_1\rangle^{2-d} \langle x_2 - x_1\rangle^{2-d} \leq  2^{d-2} |x_2|^{2-d} \sum_{\substack{x_1 \in B(2 \|y\|_\infty) \\ |x_1| < \epsilon \|y\|_{\infty}}}  \langle x_1\rangle^{2-d}\ . \]
Summing yields the bound
\[  C_1 2^{d-2} \epsilon^2 \langle x_2\rangle^{2-d}  \langle y\rangle^2\ . \]
Plugging back this back into the sum over the remaining $x_i$, we have to bound
\[ C_1 2^{d-2} \epsilon^2   \langle y\rangle^2 \sum_{\substack{x_2, \dots, x_k \in B(2 \|y\|_\infty) }} \langle x_2\rangle^{2-d}  \cdots \langle x_k - x_{k-1}\rangle^{2-d} \langle y - x_k\rangle^{2-d}.\]
We use the $\epsilon = 2$ case of our inductive hypothesis, leading to the bound
\[  C_1 2^{d-2} \epsilon^2  C^k |y|^{2(k+1) - d} \leq  C^{k+1} \epsilon^2 \langle y\rangle^{2(k+1)-d} \]
assuming $C$ is large relative to $C_1$. Pulling both terms together completes the proof.
\end{proof}

\section{Control of bubbles}\label{sec: control}
In this section, we show that we can replace the distance and resistance across bubbles, as defined in Section \ref{sec: additive}, by the truncated quantities defined in Section \ref{sec: local-v}.
\subsection{Outside the box}
The following gives an estimate for the contribution of edges outside $[-Mn,Mn]^d$:
\begin{proposition}\label{prop: outside-M}
There is a constant $C$ independent of $M$ and $n$ such that:
    \begin{equation}\label{eqn: M-trunc-est}
    n^{d-4}\sum_{f\in \mathcal{E}(\mathbb{Z}^d\setminus [-Mn,Mn]^d)} \mathbb{E}[D(bubble(\overline{f})), f \ \text{pivotal for } 0\leftrightarrow n\mathbf{e}_1]\le CM^{4-d}.
    \end{equation}
    The same estimate holds with $D(bubble(\overline{f}))$ replaced by $R_{\mathrm{eff}}(\overline{f})$ or 1, after possibly adjusting the constant.
\begin{proof}
Since 
\[1\le R_{\mathrm{eff}}(bubble(\overline{f}))\le D(bubble(\overline{f})),\]
it will suffice to estimate the quantity:
\begin{equation}
   \label{eqn: bubble-E-sum} 
n^{-2}\times n^{d-2} \times \sum_{f\notin \mathcal{E}([-Mn,Mn]^d)} \mathbb{E}[D(bubble(\overline{f})), f \ \text{pivotal}].
\end{equation}

Summing over the possible heads of the bubble of $f$ (Recall the definition in Section \ref{sec: bubble-def}), we find the upper bound 
\begin{equation} \label{eqn: int-bubble-trunc} \leq C n^{d-4}  \sum_{z \in \mathbb{Z}^d} \mathbb{E}[\mathrm{dist}(f, z),\,  \text{$f$ open pivotal}, head(bubble(\overline{f}))=z ]\ . 
\end{equation}
If $z$ is the head of $f$'s bubble, $\overline{f}$ has two disjoint connections to $z$. We upper bound $\mathrm{dist}(f, z)$ by the number of vertices on one of these connections:
\[\mathrm{dist}(f, z)\le \#\{y\in \mathbb{Z}^d: 0\leftrightarrow y \text{ and } y\leftrightarrow z  \text{ disjointly}\}.\]
This leads to the upper bound
\begin{equation}\label{eqn: 5-fold} C  \sum_{z, y \in \mathbb{Z}^d} \mathbb{P}(0 \leftrightarrow \underline{f}) \mathbb{P}(\overline{f} \leftrightarrow y) \mathbb{P}(\overline{f} \leftrightarrow z) \mathbb{P}(y \leftrightarrow z) \mathbb{P}(z \leftrightarrow n\mathbf{e}_1)\ . 
\end{equation}
We sum over $y$ first and use the bound
\begin{equation}\label{eqn: y-z-est}
\sum_{y\in \mathbb{Z}^d} \langle \overline{f} - y\rangle^{2-d} \langle y - z\rangle^{2-d} \leq C\langle \overline{f}-z\rangle^{4-d}.
\end{equation}
Together with the two-point function bound \eqref{eqn: HS-scaling}, we find that \eqref{eqn: 5-fold} is bounded by
\begin{equation}\label{eqn: z-sum}
C \sum_{z\in \mathbb{Z}^d}  \langle \overline{f}-z\rangle^{6-2d} \mathbb{P}(0 \leftrightarrow \underline{f})  \mathbb{P}(z \leftrightarrow n\mathbf{e}_1)\ . 
\end{equation}
Summing \eqref{eqn: z-sum} over $f\notin \mathcal{E}([-Mn,Mn]^d)$, we find
\begin{equation}
\label{eqn: f-sum}
\begin{split}
&\sum_{f\in \mathcal{E}(\mathbb{Z}^d\setminus [-Mn,Mn]^d)} \mathbb{P}(0 \leftrightarrow \overline{f}) \sum_{z \in \mathbb{Z}^d}\langle \overline{f}-z\rangle^{6-2d}\mathbb{P}(z \leftrightarrow n\mathbf{e}_1)\\
\le~& C\sum_{f\in \mathcal{E}(\mathbb{Z}^d\setminus [-Mn,Mn]^d)}\langle \overline{f}\rangle^{2-d}\sum_{z\in \mathbb{Z}^d}\langle \overline{f}-z\rangle^{6-2d}\langle z-n\mathbf{e}_1\rangle^{2-d}\\
\le~& C\sum_{f\in \mathcal{E}(\mathbb{Z}^d\setminus [-Mn,Mn]^d)}\langle \overline{f}\rangle^{2-d}\langle \overline{f}-n\mathbf{e}_1\rangle^{2-d}
\end{split}
\end{equation}
In the final step, we have used the estimate
\begin{equation}
\sum_{x\in \mathbb{Z}^d}
\langle u-x\rangle^{-\alpha} \langle x-v\rangle^{-\beta} \le C\langle u-v\rangle^{-\beta}
\end{equation}
if $\alpha>d$, $\beta>0$.

Summing \eqref{eqn: f-sum} over $f\in \mathcal{E}(\mathbb{Z}^d\setminus [-Mn,Mn]^d)$, we get
\[\sum_{f\in \mathcal{E}(\mathbb{Z}^d\setminus [-Mn,Mn]^d)}\langle \overline{f}\rangle^{2-d}\langle \overline{f}-n\mathbf{e}_1\rangle^{2-d}\le C(Mn)^{4-d}.\]
\end{proof}
\end{proposition}

\subsection{Large bubbles}
Next, we control the effect of large bubbles.

\begin{proposition}\label{prop: large-bubble-trunc}
We have the estimate, uniformly in $n$ and $L$ sufficiently large:

\begin{equation}\label{eqn: undisclosed}
n^{d-4}\sum_{f\in \mathbb{Z}^d} \mathbb{E}[D(bubble(\overline{f})),\,\mathrm{diam}(bubble(\overline{f}))\ge L/2, f \ \text{pivotal for } 0\leftrightarrow n\mathbf{e}_1]\le CL^{6-d}(\log L)^3.
\end{equation}
\end{proposition}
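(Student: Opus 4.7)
The plan is to reduce the event in \eqref{eqn: undisclosed} to a three-fold disjoint occurrence, apply the BK inequality \eqref{eqn: BK}, and then conclude using the two-point bound \eqref{eqn: HS-scaling}, the Kozma--Nachmias one-arm estimate \eqref{eqn: KN}, and the convolution bound \eqref{eqn: conv-est}.

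First I would establish the following geometric implication, in close analogy with Lemma~\ref{lem: distances-differ}: if $f$ is pivotal for $0\lra n\mathbf{e}_1$ and $\mathrm{diam}(bubble(\overline f))\ge L/2$, then
\[\{0\lra\underline f\}\circ\{\overline f\lra n\mathbf{e}_1\}\circ\{\overline f\lra \partial B(\overline f, L/4)\}\]
occurs. The diameter condition yields two bubble-endpoints at $\ell^\infty$-distance at least $L/2$, and the triangle inequality then produces a vertex $z\in bubble(\overline f)$ with $|z-\overline f|\ge L/4$; by the definition of the bubble, $z$ admits two edge-disjoint open paths $\eta_1,\eta_2$ to $\overline f$ lying in $\mathbb{Z}^d\setminus C^f(0)$. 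Pivotality supplies an open path $\gamma_0\subseteq C^f(0)$ from $0$ to $\underline f$ and an open path $\gamma_1$ from $\overline f$ to $n\mathbf{e}_1$ off $C^f(0)$, with $\gamma_0$ automatically edge-disjoint from anything living off $C^f(0)$. It then remains to extract from $\gamma_1,\eta_1,\eta_2$ two edge-disjoint open paths out of $\overline f$, one to $n\mathbf{e}_1$ and one to $\partial B(\overline f, L/4)$. I would let $c$ be the last vertex of $\gamma_1$ (scanning from $\overline f$ toward $n\mathbf{e}_1$) lying on $\eta_1\cup\eta_2$; assuming without loss of generality $c\in\eta_1$, I would define $\pi_1$ as the initial portion of $\eta_1$ from $\overline f$ to $c$ concatenated with the portion $\gamma_1''$ of $\gamma_1$ from $c$ to $n\mathbf{e}_1$. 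The maximality of $c$ forces $\gamma_1''$ to avoid every edge of $\eta_1\cup\eta_2$; together with edge-disjointness of $\eta_1$ and $\eta_2$, this makes $\pi_1$ edge-disjoint from $\pi_2:=\eta_2$, and $\pi_2$ must cross $\partial B(\overline f, L/4)$ since $z$ lies outside that ball. Degenerate cases (for instance $c=\overline f$, or $\gamma_1$ avoiding $\eta_1\cup\eta_2$ altogether) only make the construction easier.

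Given this disjoint occurrence, BK gives
\[\mathbb{P}(\mathrm{diam}(bubble(\overline f))\ge L/2,\ f\text{ pivotal})\le C\, L^{-2}\,\langle\underline f\rangle^{2-d}\,\langle \overline f-n\mathbf{e}_1\rangle^{2-d},\]
after invoking \eqref{eqn: HS-scaling} and \eqref{eqn: KN}. Summing over $f\in\mathcal{E}([-Mn,Mn]^d)$ and using that $\langle \overline f-n\mathbf{e}_1\rangle\asymp\langle\underline f-n\mathbf{e}_1\rangle$, the task reduces to bounding $\sum_{x\in\mathbb{Z}^d}\langle x\rangle^{2-d}\langle x-n\mathbf{e}_1\rangle^{2-d}$, which by \eqref{eqn: conv-est} (applicable since $d\ge 11>4$) is at most $C\langle n\mathbf{e}_1\rangle^{4-d}\le C n^{4-d}$. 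Multiplying by $n^{d-4}$ yields $CL^{-2}$ uniformly in $n$ and $M$, as required.

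The main obstacle is the geometric path-surgery step used to produce the three-fold disjoint occurrence; in particular, one must verify that the ``last-intersection'' rerouting cleanly delivers edge-disjoint witnesses even when $\gamma_1$ shares edges with both $\eta_1$ and $\eta_2$. Once this reduction is in place, the remainder follows by the standard BK-plus-convolution pattern already used in Proposition~\ref{prop: local-trunc} and Lemma~\ref{lem: distances-differ}.
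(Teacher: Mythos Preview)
Your proposal is correct and follows essentially the same route as the paper: a geometric lemma producing the triple disjoint occurrence $\{0\lra\underline f\}\circ\{\overline f\lra n\mathbf{e}_1\}\circ\{\overline f\lra \partial B(\overline f,L/4)\}$, followed by BK, the one-arm bound \eqref{eqn: KN}, and the convolution estimate \eqref{eqn: conv-est}. The only cosmetic difference is in the path surgery---the paper fixes the circuit through the \emph{head} of the bubble and splices in a path to a far bubble-vertex, whereas you fix the circuit through a far bubble-vertex and splice in the path to $n\mathbf{e}_1$---but the last-intersection rerouting you describe works and yields the same conclusion.
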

\begin{proof}

To simplify notation, we let
\[D(\overline{f}):=D(bubble(\overline{f})).\]
We decompose the inner expectation according to the diameter of the bubble:
\begin{equation} \sum_{k = \log (L/2)}^{\infty} \mathbb{E}[D(\overline{f}) ; 2^k \leq \mathrm{diam}(bubble(f)) <  2^{k+1},  \text{$f$ open pivotal for } 0\leftrightarrow n\mathbf{e}_1]\ . 
\end{equation}

Next, we decompose the expectation according to whether or not there is some vertex $y$ in the bubble of $\overline{f}$ such that 
\[\mathrm{dist}(f, y) > 2^{2k} k^3.\] 
 On this event we upper bound $D(\overline{f})$ by $2^{dk}$ deterministically and use the estimate (7) from \cite{CHS}
\begin{equation}\label{eqn: CHS-bound}
\mathbb{P}\big(\mathrm{dist}(\overline{f},y)>\lambda 2^{2k} \mid 0 \sa{B(\overline{f};2^k)}  y\big)\le e^{-c\lambda}.
\end{equation}
We see
\begin{equation}\label{eqn: dist-control}
\begin{split}
    &\mathbb{E}[D(\overline{f}) ; 2^k \leq \mathrm{diam}(bubble(f)) <  2^{k+1},  \text{$f$ open pivotal}, \text{$\exists y$  in $Ann(2^k, 2^{k+1}; f)$ s.t. } \mathrm{dist}(\overline{f}, y) > 2^{2k} k^3 ]\\
    \leq ~& 2^{dk} \sum_{y \in Ann(2^k , 2^{k+1}; f)} \mathbb{P}[\{0 \leftrightarrow f\} \circ \{y \leftrightarrow ne_1\} \circ\{y \stackrel{B(f; 2^{k+1})}{\longleftrightarrow} f \text{ by an open path of length $> 2^{2k} k^3$}]\\
    \leq ~&  2^{dk} \sum_{y \in Ann(2^k , 2^{k+1}; \overline{f})}  \langle f\rangle^{2-d}\langle y\leftrightarrow n\mathbf{e}_1\rangle^{2-d}  \mathbb{P}( y \stackrel{B(f; 2^{k+1})}{\longleftrightarrow} f \text{ by an open path of length $> 2^{2k} k^3$})\\
    \leq ~& 2^{dk}\langle f\rangle^{2-d}2^{-c k^3}  \sum_{y \in Ann(2^k , 2^{k+1}; f)} \langle y-n\mathbf{e}_1\rangle^{2-d} \\
    \le ~& 2^{2kd} 2^{-ck^3}\langle f\rangle^{2-d}\langle f-n\mathbf{e}_1\rangle^{2-d}.
\end{split}
\end{equation}

It suffices to bound the remaining term, where $\mathrm{dist}(x,y)\le 2^{2k}k^3$
for any $y\in Ann(2^k,2^{k+1};\overline{f})$.
We decompose the sum based on the location of $head(bubble(\overline{f}))$:
\begin{align*}
    &\mathbb{E}[D(\overline{f}); 2^k \leq \mathrm{diam}(bubble(f)) <  2^{k+1},  \text{$f$ open pivotal}, \text{ no vertex $y$  in $Ann(2^k, 2^{k+1}; f)$ has } \mathrm{dist}(\overline{f}, y) > 2^{2k} k^3 ]\\
    \leq &~2^{2k} k^3 \sum_{|z-\overline{f}| \le 2^{k+1}}  \mathbb{P}(2^k \leq \mathrm{diam}(bubble(\overline{f})) <  2^{k+1}, head(bubble(\overline{f}))=z,  \text{$f$ open pivotal}).
\end{align*}

\paragraph{Case A: $|z-\overline{f}|> 2^{k-3}$.} We bound the sum over $z\in Ann(2^{k-3},2^{k+1};\overline{f})$. In this case, we have the bound by
\begin{align*}
&2^{2k}k^3 \sum_{z\in Ann(2^{k-3}, 2^{k+1}; \overline{f})}\mathbb{P}(0\leftrightarrow \overline{f}\circ f\Leftrightarrow z \circ z\leftrightarrow n\mathbf{e}_1)\\
\le &2^{2k}k^3  2^{(4-2d)k} \langle f\rangle^{2-d}\sum_{z\in Ann(2^{k-3}, 2^{k+1}; \overline{f})}\langle  z- n\mathbf{e}_1\rangle^{2-d}\\
\le & 2^{2k}k^3  2^{(4-2d)k} \langle f\rangle^{2-d}2^{kd}\langle \overline{f}-n\mathbf{e}_1\rangle^{2-d}\\
\le& 2^{(6-d)k}k^3 \langle f\rangle^{2-d} \langle \overline{f}-n\mathbf{e}_1\rangle^{2-d}.
\end{align*}
Summing over $k$, we obtain the result under the condition $d>6$.

\paragraph{Case B: $|z-\overline{f}|\le 2^{k-3}$.}It remains to consider the sum over $|z-\overline{f}|\le 2^{k-3}$.  If $2^k\le \mathrm{diam}(bubble(\overline{f}))<2^{k+1}$, there is an $w\in Ann(\overline{f};2^k,2^{k+1})\cap bubble(\overline{f})$. There are two edge-disjoint connections $\eta_1$ and $\eta_2$ from $z$ to $\overline{f}$, forming a cycle $\eta$. The vertex $w$ has two connections to $\overline{f}$ that first meet $\eta$ at two vertices $a_1$ and $a_2$.

\paragraph{Case B.1: $a_1$ and $a_2$ in the same segment.} If $a_1$ and $a_2$ both lie on the same segment, $\eta_1$ or $\eta_2$ of $\eta$, then there is an open arc $\eta'$
\[\overline{f}\rightarrow a_1\rightarrow w\rightarrow a_2\rightarrow z\]
disjoint from either $\eta_1$ or $\eta_2$. By considering the first point $u$ of $\eta'$ outside of $B(\overline{f};2^{k-3})$  and the first point $v$ inside $B(\overline{f};2^{k-3})$ appearing along $\eta'$ after $u$, we find the existence of points $u,v$ such that
\[A_1:=\{0\leftrightarrow\overline{f}\}\circ \{\overline{f}\leftrightarrow u\}\circ \{u\leftrightarrow v\} \circ \{v \leftrightarrow z\}\circ \{\overline{f}\leftrightarrow z\}\circ \{z\leftrightarrow n\mathbf{e}_1\}\circ \{u\leftrightarrow B_{2^{k-1}}(u)\}\circ \{v\leftrightarrow B_{2^{k-1}}(v)\}.\]
Applying the BK inequality \eqref{eqn: BK}, we have
\begin{align*}
\mathbb{P}(A_1)&\le C\langle \overline{f}\rangle^{2-d}2^{-4k}\sum_{z:\,|z-\overline{f}|\le 2^{k-3}}\sum_{u,v}\langle \overline{f}-u\rangle^{2-d}\langle u-v\rangle^{2-d}\langle v-z\rangle^{2-d}\langle z-\overline{f}\rangle^{2-d}\langle z-n\mathbf{e}_1\rangle^{2-d}\\
&\le C\langle \overline{f}\rangle^{2-d}2^{-4k}\sum_{z:\,|z-\overline{f}|\le 2^{k-3}} \langle \overline{f}-z\rangle^{8-2d}\langle z-n\mathbf{e}_1\rangle^{2-d}.
\end{align*}
Note the bound
\begin{equation}\label{eqn: d-cases}
\sum_{z:\,|z-\overline{f}|\le 2^{k-3}} \langle \overline{f}-z\rangle^{8-2d}\langle z-n\mathbf{e}_1\rangle^{2-d}\leq C\langle \overline{f}-n\mathbf{e}_1\rangle^{2-d}\times \begin{cases} 1, & d>8\\
k, & d=8 \\
2^{k(8-d)},& \quad d< 8\
\end{cases}.
\end{equation}
Multiplying by $k^3 2^{2k}$, we find the estimate
\begin{align*}
    &\sum_{k\ge \log L} k^32^{2k}\mathbb{P}(A_1)\\
    \le & C\langle \overline{f}\rangle^{2-d}\langle \overline{f}-n\mathbf{e}_1\rangle^{2-d}\sum_{k\ge \log L} k^3 2^{2k}2^{k(8-d)-4k}\\
    \le & CL^{d-6} (\log L)^3 \langle \overline{f}\rangle^{2-d}\langle \overline{f}-n\mathbf{e}_1\rangle^{2-d}.
\end{align*}
Summing over $f$ and multiplying by $n^{d-4}$, we obtain an estimate of 
\[CL^{d-6}(\log L)^3.\]

\paragraph{Case B.2: $a_1$ and $a_2$ in different segments.} We now treat the case where $a_1\in \eta_1$ and $a_2\in \eta_2$ (or vice-versa.) 

\paragraph{Case B.2.1: $a_1,a_2\in B(\overline{f};2^{k-1})$.} If both $a_1$ and $a_2$ lie inside $B(\overline{f}; 2^{k-1})$, then we have the event $A_2$ defined by
\[\{0\leftrightarrow\overline{f}\}\circ \{\overline{f}\leftrightarrow a_1\}\circ \{a_1\leftrightarrow z\}\circ \{\overline{f}\leftrightarrow a_2\}\circ \{a_2\leftrightarrow z\}\circ \{z\leftrightarrow n\mathbf{e}_1\}\circ \{a_1\leftrightarrow B_{2^{k-1}}(a_1)\}\circ \{a_2\leftrightarrow B_{2^{k-1}}(a_2)\}.\]
Using the one-arm exponent \eqref{eqn: KN}, we have the estimate
\begin{align*}
&\mathbb{P}(A_2)\\
\le~&C2^{-4k}\langle \overline{f}\rangle^{2-d}\langle z-n\mathbf{e}_1\rangle^{2-d}\sum_{a_1,a_2} \langle \overline{f}-a_1\rangle^{2-d}\langle a_1-z\rangle^{2-d}\langle \overline{f}-a_2\rangle^{2-d}\langle a_2-z\rangle^{2-d}\\
\le~&C2^{-4k}\langle \overline{f}\rangle^{2-d}\langle z-n\mathbf{e}_1\rangle^{2-d}\langle \overline{f}-z\rangle^{8-2d}.
\end{align*}
To take the sum over $z$, we use \eqref{eqn: d-cases} again, and conclude as previously.

\paragraph{Case B.2.2: $a_1\notin B(\overline{f};2^{k-1})$.} It remains to consider the case where one of $a_1$ and $a_2$, say $a_1$, lies outside $B(\overline{f},2^{k-1})$, then we have
the event
\[\{0\leftrightarrow \overline{f}\}\circ\{\overline{f}\leftrightarrow a_1\}\circ\{a_1\leftrightarrow z\} \circ \{a_1\leftrightarrow a_2\} \circ\{\overline{f}\leftrightarrow a_2\}\circ \{a_2\leftrightarrow z\} \circ \{z \leftrightarrow n\mathbf{e}_1\}.\]
Since $|\overline{f}-a_1|,|z-a_1|\ge 2^{k-2}$, we have
\[\mathbb{P}(\overline{f}\leftrightarrow a_1 \circ a_1\leftrightarrow z )\le C2^{-2(d-2)k}.\]
We again distinguish two cases.

\paragraph{Case B.2.2.1: $a_2\in B(\overline{f},2^{k-2})$.} If $a_2\in B(\overline{f},2^{k-2})$, then we also have $|a_1-a_2|\ge 2^{k-2}$ and consequently 
\[\mathbb{P}(a_1\leftrightarrow a_2)\le C2^{-(d-2)k}.\]
In this case, we estimate as follows
\begin{align*}
&\mathbb{P}(2^k \leq \mathrm{diam}(bubble(\overline{f})) <  2^{k+1}, head(bubble(\overline{f}))=z,  \text{$f$ open pivotal})\\
    \leq~& C\langle \overline{f}\rangle^{2-d}2^{-(d-2)k}\sum_{a_1\in Ann(\overline{f};2^{k},2^{k+1})} 2^{-2kd+4k}\sum_{ z\in B(\overline{f};2^{k-1})}\sum_{a_2} \langle \overline{f}-a_2\rangle^{2-d}\langle a_2-z\rangle^{2-d}\langle z-n\mathbf{e}_1\rangle^{2-d}\\
    \leq~&C\langle \overline{f}\rangle^{2-d}2^{-(d-2)k}\sum_{a_1\in Ann(\overline{f};2^{k},2^{k+1})} 2^{-2kd+4k}\sum_{ z\in B(\overline{f};2^{k-1})}\langle \overline{f}-z\rangle^{4-d}\langle z-n\mathbf{e}_1\rangle^{2-d}\\
    \leq~&C k2^{-2kd+10k}\langle \overline{f}\rangle^{2-d}\langle \overline{f}-n\mathbf{e}_1\rangle^{2-d}.
\end{align*}
In the final step, we have bounded
\[\langle \overline{f}-z\rangle^{4-d}\le C2^{4k}\langle\overline{f}-z\rangle^{-d}\]
before performing the sum over $z$
\[\sum_{ z\in B(\overline{f};2^{k-1})}\langle \overline{f}-z\rangle^{-d}\langle z-n\mathbf{e}_1\rangle^{2-d}\le Ck\langle f-n\mathbf{e}_1\rangle^{2-d}\]
Multiplying by $n^{4-d}2^{2k}k^3$ and summing over $f$ and $k\ge L/2$ gives an estimate of
\[CL^{12-2d}(\log L)^4=CL^{2(6-d)}(\log L)^4.\]
\paragraph{Case B.2.2.2: $a_2\notin B(\overline{f},2^{k-2})$.} Finally, we consider the case where $|a_2-\overline{f}|\ge 2^{k-2}$. In this case, we use the estimate
\[\mathbb{P}(\overline{f}\leftrightarrow a_2\circ a_2\leftrightarrow z)\le C\langle \overline{f}-a_2\rangle^{-d+2}\langle a_2-z\rangle^{-d+2}\le 2^{-2(d-2)k}.\]
We decompose according to the distance between $a_1$ and $a_2$ and use
\[\#\{a_2: 2^\ell \le  |a_1-a_2|\le 2^{\ell+1}\}\le C2^{d\ell}.\]
\begin{align*}
&\mathbb{P}(2^k \leq \mathrm{diam}(bubble(\overline{f})) <  2^{k+1}, head(bubble(\overline{f}))=z,  \text{$f$ open pivotal})\\
    \leq~& C\langle \overline{f}\rangle^{2-d}2^{-2(d-2)k}\sum_{a_1\in Ann(\overline{f};2^{k},2^{k+1})} 2^{4k-2kd}\sum_{\ell=1}^{k}2^{-\ell(d-2)}2^{d\ell} \sum_{ z\in B(\overline{f};2^{k-1})}\langle z-n\mathbf{e}_1\rangle^{2-d}\\
    \leq~& C\langle \overline{f}\rangle^{2-d} 2^{10k-2kd}\langle \overline{f}-n\mathbf{e}_1\rangle^{2-d}.
\end{align*}
Multiplying by $2^{2k}k^3$ and summing, we get an estimate of
\[CL^{2(6-d)}(\log L)^3.\]
\end{proof}

\subsection{Truncation}\label{sec: truncate}
In this section, we replace the quantity 
\begin{equation}\label{eqn: dist-chem}
    D(bubble(\overline{f}))\mathbbm{1}[f \text{ pivotal for } 0\lra n\mathbf{e}_1]
\end{equation} by the chemical distance to a truncated distance 
\begin{equation}\label{eqn: dist-trunc}
\mathrm{dist}_{bubble_V(\overline{f})}\big(\overline{f}, S_L(f,V)\big)\mathbbm{1}[f \text{ pivotal for } 0\lra n\mathbf{e}_1]
\end{equation} 
with $V=C^f(0)$ and $S_L(f,V)$ as defined in \eqref{eqn: SLVf}.
\begin{proposition}\label{prop: local-trunc}
We have:
\[n^{-2}\sum_{f\in [-Mn,Mn]^d} \mathbb{E}\Big( D(bubble(\overline{f})), D(bubble(\overline{f}))\neq \mathrm{dist}_{bubble(\overline{f})}\big(\overline{f},S_L(f,C^f(0))\big), f \text{ pivotal }\mid 0\lra n\mathbf{e}_1\Big)=O(L^{-1/2}),\]
uniformly in $M$.

\end{proposition}
\begin{proof}
Let 
\[\mathrm{Diff}_L =\{ D(bubble(\overline{f}))\neq \mathrm{dist}_{bubble(\overline{f})}\big(\overline{f},S_L(f,C^f(0))\big)\}.\]
By Proposition \ref{prop: large-bubble-trunc}, we have
\[n^{-2}\sum_{f\in [-Mn,Mn]^d} \mathbb{E}\Big( D(bubble(\overline{f})), \mathrm{Diff}_L, \mathrm{diam}(bubble(\overline{f}))>L^{\frac{1}{2}}, f \text{ pivotal }\mid 0\lra n\mathbf{e}_1\Big)\le CL^{\frac{6-d}{2}},\]
so it suffices to consider the complementary sum, where the diameter of each bubble is limited to $L^{\frac{1}{2}}$. Reproducing the estimates in \eqref{eqn: dist-control}, we can moreover assume that $D(bubble(\overline{f}))\le CL(\log L)^3$, at the cost of an error of order $2^{-c(\log L)^3}$.

It suffices to estimate the quantity
\[n^{-2}L(\log L)^3\sum_{f\in [-Mn,Mn]^d} \mathbb{P}\Big( \mathrm{Diff}_L, f \text{ pivotal }\mid 0\lra n\mathbf{e}_1\Big).\]

The distance \eqref{eqn: dist-chem} is 
\begin{equation}
\label{eqn: dist-not-trunc}
\mathrm{dist}(\overline{f},\{x\in \mathbb{Z}^d\setminus C^f(0) : x \leftrightarrow n\mathbf{e}_1 \text{ off }  bubble(\overline{f})\})
\end{equation}
while the truncation is 
\[\mathrm{dist}(\overline{f},\{x\in \mathbb{Z}^d\setminus C^f(0) : x\leftrightarrow B_L(\overline{f}) \text{ off }  bubble(\overline{f})\}).\]
We use the following, to be proved later.
\begin{lemma}\label{lem: distances-differ}
Suppose $f\in \mathcal{E}(\mathbb{Z}^d)$ is pivotal for the connection from $0$ to $n\mathbf{e}_1$, and  the quantities \eqref{eqn: dist-not-trunc}  and \eqref{eqn: dist-trunc} are \emph{not} equal. Then, the event
\begin{equation}\label{eqn: triple-circ}
    \{0\lra \underline{f}\}\circ \{\overline{f}\lra n\mathbf{e}_1\}\circ\{\overline{f}\lra B(\overline{f},L)\}
\end{equation}
occurs.
\end{lemma}

Applying Lemma \ref{lem: distances-differ}, we obtain
\begin{equation}\label{eqn: dist-zwy}
\begin{split}
&\mathbbm{1}[\mathrm{Diff}_L, f \text{ pivotal for } 0\lra n\mathbf{e}_1\big]\\
\le~& \mathbbm{1}[0\lra \underline{f}\circ \overline{f}\lra n\mathbf{e}_1 \circ \overline{f}\lra B(\overline{f},L)].
\end{split}
\end{equation}

By \eqref{eqn: dist-zwy} and the BK inequality, we have
\begin{equation}
\label{eqn: diff-bound}
\begin{split}
    &\mathbb{P}\Big(\mathrm{Diff}_L, f \text{ pivotal for } 0\leftrightarrow n\mathbf{e}_1\Big)\\
    \le~& \mathbb{P}(0\leftrightarrow \overline{f})\mathbb{P}(\overline{f}\lra n\mathbf{e}_1)\mathbb{P}(\overline{f}\leftrightarrow B(\overline{f},L))\\
    \le~&CL^{-2}\langle f\rangle^{2-d} \langle \overline{f}-n\mathbf{e}_1\rangle^{2-d}.
    \end{split}
\end{equation}
In the final step, we have used the estimate
\[\mathbb{P}(\overline{f}\leftrightarrow B(\overline{f},L))\le CL^{-2},\]
which follows from the one-arm estimate \eqref{eqn: KN}.

Summing the above over $f$, we obtain
\begin{align*}
    &CL^{-2}\sum_{f\in [-Mn,Mn]^d}\langle \underline{f}\rangle^{2-d} \langle \overline{f}-n\mathbf{e}_1\rangle^{2-d}\\
    \le &~CL^{-2}\sum_{f\in \mathbb{Z}^d} \langle \underline{f}\rangle^{2-d} \langle \overline{f}-n\mathbf{e}_1\rangle^{2-d}\\
    \le &~CL^{-2}n^{4-d},
\end{align*}
uniformly in $M$, from which it follows easily that
\begin{equation}
    \label{eqn: bubble-prop}
    n^{-2}\sum_{f\in [-Mn,Mn]^d} \mathbb{P}\Big( \mathrm{Diff}_L, f \text{ pivotal }\mid 0\lra n\mathbf{e}_1\Big)=O(L^{-2}),
\end{equation}
and thus
\[n^{-2}L(\log L)^3\sum_{f\in [-Mn,Mn]^d} \mathbb{P}\Big( \mathrm{Diff}_L, f \text{ pivotal }\mid 0\lra n\mathbf{e}_1\Big)=O(L^{-1/2}).\]
\end{proof}

\begin{proof}[Proof of Lemma \ref{lem: distances-differ}]
    We begin by showing that the conditions of the lemma imply that there are sites $z\neq w$ in $bubble(\overline{f})$ such that
\begin{enumerate}
\item $z$ is the head of $bubble(\overline{f})$, and so is connected to $n\mathbf{e}_1$ off $bubble(\overline{f})$,
\item $w$ is connected to $\partial B(\overline{f},L)$ off $bubble(\overline{f})$,
\item the two connections are edge-disjoint.
\end{enumerate}

    Let $z=\mathrm{head}(bubble(\overline{f}))$. By \eqref{eqn: dist-to-head}, we have
    \[D(bubble(\overline{f}))=\mathrm{dist}_{bubble(\overline{f})}(\overline{f},z).\]
    If the distances differ, then there is $w\in bubble(\overline{f})$ realizing the minimum in the distance \[\mathrm{dist}_{bubble(\overline{f})}\big(\overline{f},S_L(f,C^f(0))\big)\] with $z\neq w$. 
    By definition, the vertices $z$ and $w$ have connections to $n\mathbf{e}_1$ and $B(\overline{f},L)$, respectively, off $bubble(\overline{f})$. These connections must be vertex-disjoint, since otherwise they would not lie off $bubble(\overline{f})$, as can be seen by considering the first vertex in the connection $z\leftrightarrow n\mathbf{e}_1$ that intersects with a connection from $w$ to $B(\overline{f}, L)$. This vertex has two edge-disjoint connections to $\overline{f}$, and thus is in $bubble(\overline{f})$, a contradiction, so the vertices $z$ and $w$ with properties 1., 2. and 3. exist.

    We now show that \eqref{eqn: triple-circ} occurs. Choose two edge-disjoint open paths $\eta_1$ and $\eta_2$ between $\overline{f}$ and the head $z$. These two paths form a circuit of open edges which we denote by $\eta=\eta_1\cup\eta_2$. 
    
    The vertex $w$ whose existence is noted above has two  edge-disjoint connections to $\overline{f}$, both lying inside $bubble(\overline{f})$. Choose one, and call it $\gamma$. Call the portion of this open path from $w$ until the first vertex belonging to $\eta$, possibly $\overline{f}$, $\gamma'$.  Let $a$ be the first vertex of $\gamma'$ in $\eta$.
    
    The vertex $a$ could coincide with $\overline{f}$. If this is the case, $w$ has a connection to $\overline{f}$ which is edge-disjoint from both $\eta_1$ and $\eta_2$, and so $\overline{f}$ is connected to $B(\overline{f},L)$ edge-disjointly from the concatenation of $\eta_1$ with the connection from $z$ to $n\mathbf{e}_1$ guaranteed by the first part of the lemma.

    Second, if $a$ coincides with $z$, then we can concatenate $\eta_1$ with $\gamma'$ and the connection from $w$ to $B(\overline{f},L)$ off $bubble(\overline{f})$ on the one hand and $\eta_2$ with the connection from $z$ to $n\mathbf{e}_1$ on the other hand to obtain two edge-disjoint open paths.
    
    In the remaining case, $a$ lies in one  of the segments $\eta_1 \setminus \{z,\overline{f}\}$ or $\eta_2\setminus \{z,\overline{f}\}$. Say $a\in \eta_1$. Then, following $\eta_1$ from $\overline{f}$ to $a$, and then $\gamma_1'$ from $a$ to $w$, and then the connection from $w$ to $B(\overline{f},L)$, we obtain a path from $\overline{f}$ to $B(\overline{f},L)$ which is necessarily edge-disjoint from $\eta_2$ concatenated with the connection from $z$ to $n\mathbf{e}_1$ in 1. above. 
\end{proof}
\begin{figure}[htbp]  
  \centering
  \includegraphics[width=0.50\textwidth]{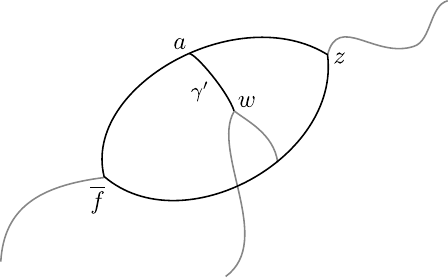}
  \caption{An illustration of the proof of Lemma \ref{lem: distances-differ}}
  \label{fig:differ}
\end{figure}

\section{Limit Statements}

\subsection{Brownian Motion} \label{section: BM}
In the expected integrated super-Brownian scaling limit, the path from $0$ to $n\mathbf{e}_1$ scales to a  Brownian path, with the time along the path representing chemical distance. See Hara and Slade \cite[Section 4]{HS} for partial results on this scaling limit. 

In this section, we identify the distribution corresponding to the limiting moments obtained in Theorem \ref{thm: local-limit}. Up to scaling, it is clear that it suffices to explain how to interpret the integrals $I_k(M)$ in \eqref{eqn: IM}. 
\begin{lemma}\label{eq}
    The quantity 
    \[k!\cdot z_d^k\cdot I_k(M)\]
    is the $k$th moment of the total occupation time of the set $[-M,M]^d$ by a Brownian motion started at $0$ conditioned to hit $\mathbf{e}_1=(1,\ldots,0)$ (or, equivalently, conditioned to hit any unit vector). The quantity $z_d$ was defined in \eqref{eqn: zd-def}.
\end{lemma}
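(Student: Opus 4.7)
The plan is to identify $z_d |x-y|^{2-d}$ as the Green's function $G(x,y)$ of Brownian motion on $\mathbb{R}^d$ (for $d\ge 3$), and then compute the moments of the occupation time of $[-M,M]^d$ by the Doob $h$-transform of Brownian motion conditioned to hit $\mathbf{e}_1$.

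First I would recall the standard identity
\[
G(x,y) := \int_0^\infty p_t(x,y)\,dt = \frac{\Gamma(d/2-1)}{2\pi^{d/2}}\,|x-y|^{2-d} = z_d\,|x-y|^{2-d},
\]
where $p_t(x,y)$ is the heat kernel. In particular $|x-y|^{2-d} = G(x,y)/z_d$, so that
\[
z_d^{\,k+1} I_k(M) = \int_{([-M,M]^d)^k} \prod_{i=0}^{k} G(x_i,x_{i+1})\,d^{kd}x, \qquad x_0=0,\ x_{k+1}=\mathbf{e}_1.
\]

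Next I would define the conditioned Brownian motion. Since Brownian motion does not hit points in dimension $d\ge 3$, the natural meaning of ``conditioned to hit $\mathbf{e}_1$'' is via Doob's $h$-transform with harmonic function $h(x) := G(x,\mathbf{e}_1) = z_d |x-\mathbf{e}_1|^{2-d}$ (harmonic away from $\mathbf{e}_1$). The conditioned process has transition density
\[
\tilde p_s(x,y) = \frac{h(y)}{h(x)}\, p_s(x,y),
\]
and almost surely hits $\mathbf{e}_1$ in finite time, at which point it is killed; this is the standard construction (see e.g. Revuz--Yor, or the analogous Bessel bridge picture). A brief justification through conditioning on hitting a small ball around $\mathbf{e}_1$ and then letting the radius go to zero may be added for completeness.

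Now, let $T_M = \int_0^\infty \mathbbm{1}_{[-M,M]^d}(\tilde X_t)\,dt$ denote the total occupation time of $[-M,M]^d$ under the conditioned law $\tilde{\mathbb{P}}_0$. Using symmetrization,
\[
\tilde{\mathbb{E}}_0[T_M^k] = k!\int_{0<t_1<\cdots<t_k}\!\!\int_{([-M,M]^d)^k} \tilde p_{t_1}(0,x_1)\,\tilde p_{t_2-t_1}(x_1,x_2)\cdots \tilde p_{t_k-t_{k-1}}(x_{k-1},x_k)\,d^{kd}x\,dt^k.
\]
The key observation is that the $h$-factors telescope:
\[
\tilde p_{t_1}(0,x_1)\cdots \tilde p_{t_k-t_{k-1}}(x_{k-1},x_k) = \frac{h(x_k)}{h(0)}\,p_{t_1}(0,x_1)\cdots p_{t_k-t_{k-1}}(x_{k-1},x_k).
\]
Changing variables $s_i = t_i - t_{i-1}$ and integrating each $s_i$ over $(0,\infty)$ converts each heat kernel factor into the corresponding Green's function, yielding
\[
\tilde{\mathbb{E}}_0[T_M^k] = \frac{k!}{h(0)}\int_{([-M,M]^d)^k} h(x_k)\,G(0,x_1)\,G(x_1,x_2)\cdots G(x_{k-1},x_k)\,d^{kd}x.
\]
Since $h(x_k) = G(x_k,\mathbf{e}_1)$ and $h(0) = G(0,\mathbf{e}_1) = z_d$, the numerator becomes $\int \prod_{i=0}^{k}G(x_i,x_{i+1})\,d^{kd}x = z_d^{k+1}\,I_k(M)$, so
\[
\tilde{\mathbb{E}}_0[T_M^k] = \frac{k!}{z_d}\cdot z_d^{\,k+1}\,I_k(M) = k!\,z_d^{\,k}\,I_k(M),
\]
as claimed. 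The rotational symmetry of Brownian motion and the Green's function then shows that any unit vector gives the same moments.

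The main (minor) technical obstacle is justifying the interpretation of ``conditioning on hitting a unit vector'' in dimensions where points are polar. This is handled cleanly by invoking the $h$-transform with $h(x)=G(x,\mathbf{e}_1)$ as the definition, and (if desired) identifying it with the weak limit of conditioning on hitting $B(\mathbf{e}_1,\varepsilon)$ as $\varepsilon\to 0$; everything else is a direct computation with Green's functions.
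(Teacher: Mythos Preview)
Your proposal is correct and follows essentially the same route as the paper: define the conditioned process via the Doob $h$-transform with $h(x)=G(x,\mathbf{e}_1)$, expand the $k$th power of the occupation time, symmetrize to ordered times, telescope the $h$-factors, and integrate the heat kernels to recover Green's functions. The only cosmetic difference is that the paper normalizes $h$ so that $h(0)=1$ and phrases the telescoping via the change-of-measure identity $\mathbb{E}[f(X_{t_1},\dots,X_{t_k}),\,T_0>t_k]=\mathbb{E}[f(W_{t_1},\dots,W_{t_k})\,h(W_{t_k})]$, whereas you work directly with the sub-Markovian transition density $\tilde p_s(x,y)=\frac{h(y)}{h(x)}p_s(x,y)$; these are equivalent formulations.
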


For $d>1$, Brownian motion does not hit isolated points. Brownian motion conditioned on the zero-probability event that it hits the origin eventually is defined by the well-known $h$-transform procedure, with $h$ given by the Green's function for the Laplacian $\frac{1}{2}\Delta$ on $\mathbb{R}^d$. See Doob's classic paper \cite{Doob} for general $h$ transforms of Brownian motion. The particular example of interest to us here appears in \cite[Section 6]{Doob}. See also \cite[Chapter IV.39]{RW} for a more modern treatment of $h$-transforms using SDEs. 

For $d\ge 3$, the Green's function is given by
\begin{equation}
    \label{eqn: G}
G(x,y)=z_d\cdot |x-y|^{2-d}.
\end{equation}
This function is harmonic away from $y=\mathbf{e}_1$. In fact, $G$ solves the distributional equation
\begin{equation}
    \frac{1}{2}\Delta_x G(\cdot,y)=\delta_y.
\end{equation}
Let 
\[h(x)=\frac{G(x,\mathbf{e}_1)}{G(0,\mathbf{e}_1)}=|x-\mathbf{e}_1|^{2-d}.\]
The function $h$ is a positive harmonic function away from $x=\mathbf{e}_1$. As a consequence, given a $d$-dimensional standard Brownian motion $W_t$, we can define a change of measure by setting, on $\mathcal{F}_t=\sigma(W_s, 0\le s \le t)$ by
\[\frac{\mathrm{d}\mathbb{Q}}{\mathrm{d}\mathbb{P}}\Big|_{\mathcal{F}_t}= h(W_t).\]
By a standard application of Girsanov's formula, this measure is the distribution of process $X_t$ defined by the SDE
\begin{equation}\label{eqn: Xt}
\begin{split}
\mathrm{d}X_t&=\mathrm{d}B_t+ \partial_x\log h(X_t)\,\mathrm{d}t \\
&=\mathrm{d}B_t+(2-d)\frac{X_t-\mathbf{e}_1}{|X_t-\mathbf{e}_1|^2}\,\mathrm{d}t,\\
\end{split}
\end{equation}
with the initial data $X_0=0$. Here $B_t$ is a standard Brownian motion. The equation \eqref{eqn: Xt} can be solved by standard methods up to a random time 
\[T_r=\inf\{t>0:|X_t-\mathbf{e}_1|\le r\},\]
since the drift coefficient is smooth and bounded up to that time. See \cite{LG} for such matters. The time $T_0$ for $X_t$ to reach $\mathbf{e}_1$ is then given by
\begin{equation}
\label{eqn: T0-def}
T_0=\lim_{r\downarrow 0}T_r.
\end{equation}
The limit exists by monotonicity. The process $X_t$ for $0\le t<T_0$ is the natural way to define a Brownian motion started at $x=0$ and conditioned to hit $\mathbf{e}_1$ eventually.

Applying It\^o's formula to \eqref{eqn: Xt}, we derive an SDE for the modulus $R_t=|X_t-\mathbf{e}_1|$ 
\[\mathrm{d}R_t=\mathrm{d}B_t+\frac{3-d}{2}\frac{1}{R_t}\,\mathrm{d}t, \quad R_0=|x|,\]
with $B_t$ a standard Brownian motion. $R_t$ is a Bessel process of dimension $4-d$. 
We have 
\[\mathbb{P}(T_0<\infty)=1,\]
see \cite[Proposition 2.1]{Law}.
The random variable $T_0$ has density
\begin{equation}\label{eqn: fdt}
f_d(t)=\frac{1}{2^{\frac{d}{2}-1}\Gamma(\frac{d}{2}-1)}t^{-\frac{d}{2}}e^{-1/2t}.    
\end{equation}
See \cite[Proposition 2.9]{Law}. This is an inverse Gamma distribution.

\begin{proposition}
Let $f:\mathbb{R}^n\rightarrow \mathbb{R}$ be a bounded Borel function. We have the identity
\begin{equation}\label{eqn: identity}
\mathbb{E}^x[f(X_{t_1},\ldots,X_{t_n}), T_0>t]=\mathbb{E}^x[f(W_{t_1},\ldots, W_{t_n})h(W_t)],
\end{equation}
for $t_1<\ldots<t_n\le t$.
\begin{proof}
    Since $h$ is harmonic away from $\mathbf{e}_1$, this follows from It\^o's formula and Girsanov's theorem by a standard argument. See \cite[Chapter IV.39]{RW} for a similar computation.
\end{proof}
\end{proposition}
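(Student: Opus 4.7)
The plan is to verify the identity by the standard localized $h$-transform argument, with the singularity of $h$ at $\mathbf{e}_1$ handled by cutting off near the pole. First, for each $r>0$, I would introduce the stopping time $\sigma_r := \inf\{s \ge 0 : |W_s - \mathbf{e}_1| \le r\}$. On $\mathbb{R}^d \setminus B(\mathbf{e}_1,r)$ the function $h(y) = |y-\mathbf{e}_1|^{2-d}$ is smooth, bounded, and harmonic (being a positive multiple of the Green's function $G(\cdot,\mathbf{e}_1)$). It\^o's formula applied to the stopped process gives
\[
h(W_{s \wedge \sigma_r}) = h(x) + \int_0^{s \wedge \sigma_r} \nabla h(W_u)\cdot dW_u,
\]
so $(h(W_{s\wedge \sigma_r}))_{s\ge 0}$ is a bounded $\mathbb{P}^x$-martingale.

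Second, on $\mathcal{F}_t$ I would define the probability measure $\mathbb{Q}_r^x$ by $d\mathbb{Q}_r^x / d\mathbb{P}^x = h(W_{t\wedge \sigma_r})/h(x)$. By Girsanov's theorem, under $\mathbb{Q}_r^x$ the process
\[
\widetilde B_s \;:=\; W_s - x - \int_0^{s\wedge \sigma_r} \frac{\nabla h(W_u)}{h(W_u)}\,du \;=\; W_s - x - (2-d)\int_0^{s\wedge \sigma_r} \frac{W_u-\mathbf{e}_1}{|W_u-\mathbf{e}_1|^2}\,du
\]
is a Brownian motion up to time $\sigma_r$. Hence, under $\mathbb{Q}_r^x$, the process $(W_s)_{0\le s\le \sigma_r \wedge t}$ solves the SDE \eqref{eqn: Xt}. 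Since the drift of \eqref{eqn: Xt} is smooth and bounded on $\mathbb{R}^d\setminus B(\mathbf{e}_1,r)$, strong uniqueness holds and the law of $(W_s)_{0\le s\le t\wedge \sigma_r}$ under $\mathbb{Q}_r^x$ coincides with the law of $(X_s)_{0\le s\le t\wedge T_r}$ under $\mathbb{P}^x$, where $T_r := \inf\{s: |X_s-\mathbf{e}_1|\le r\}$.

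Third, for $t_1<\cdots<t_n\le t$ and bounded Borel $f$, this equality of laws restricted to $\{T_r > t\}$ (equivalently $\{\sigma_r>t\}$ on the Brownian side) yields, via the definition of $\mathbb{Q}_r^x$ and the fact that $h(W_{t\wedge\sigma_r})=h(W_t)$ on $\{\sigma_r>t\}$,
\[
\mathbb{E}^x\!\left[f(X_{t_1},\ldots,X_{t_n}),\, T_r>t\right] \;=\; \frac{1}{h(x)}\,\mathbb{E}^x\!\left[f(W_{t_1},\ldots,W_{t_n})\,h(W_t)\,\mathbf{1}_{\sigma_r>t}\right].
\]
The stated identity corresponds to $x=0$, where $h(0)=|\mathbf{e}_1|^{2-d}=1$; for general starting point one obtains the same formula with the prefactor $h(x)^{-1}$.

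Finally, I would send $r\downarrow 0$. On the left, $T_r\uparrow T_0$ by \eqref{eqn: T0-def}, so $\{T_r>t\}\uparrow \{T_0>t\}$ and bounded convergence applies. On the right, $\sigma_r\uparrow \infty$ $\mathbb{P}^x$-a.s.\ because Brownian motion in $d\ge 3$ almost surely does not visit the single point $\mathbf{e}_1$, so $\mathbf{1}_{\sigma_r>t}\uparrow 1$ a.s. Domination is provided by $|f|_\infty\, h(W_t)$, whose expectation is finite since the Gaussian density $p_t(x,\cdot)$ is bounded and $\int |y-\mathbf{e}_1|^{2-d}\,dy$ is integrable near $\mathbf{e}_1$ in $d$ dimensions (and decays fast at infinity against $p_t$). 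Dominated convergence delivers the identity. The main subtlety is this last passage to the limit: one must simultaneously justify a.s.\ convergence of both truncation events and produce an integrable dominator, and it is exactly here that the choice of $h$ as the (harmonic) Green's function and the dimensional assumption $d\ge 3$ are essential.
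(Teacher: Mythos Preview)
Your proposal is correct and is precisely the ``standard argument'' the paper invokes: localize away from the singularity, use It\^o's formula to see that $h(W_{\cdot\wedge\sigma_r})$ is a bounded martingale, apply Girsanov to identify the law of $W$ under the tilted measure with that of $X$ up to $T_r$, and then pass to the limit $r\downarrow 0$ by dominated convergence. You also correctly flag that the identity as written in the paper carries an implicit normalization $h(x)^{-1}$ for general starting points, which disappears at $x=0$ since $h(0)=1$; this is consistent with the paper's actual use of the identity (only the case $x=0$ is needed downstream).
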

The next proposition relates the moments \eqref{eqn: IM} to the occupation measure of the process \eqref{eqn: Xt}.
\begin{proposition}\label{lem: moments}
    Let $X_t$ denote the process defined by \eqref{eqn: Xt}. Let $A\subset \mathbb{R}^d$ be a bounded Borel set and $k$ be an integer. Then
    \begin{align*}
    \mathbb{E}\left[\left(\int_0^{T_0} 1_A(X_t)\,\mathrm{d}t\right)^k\right]&=\frac{k!}{z_d}\int_A\cdots\int_A G(0,x_1)G(x_1,x_2)\cdots G(x_{k-1},x_k)G(x_k,\mathbf{e}_1)\,\mathrm{d}x_1\cdots\mathrm{d}x_k\\
    &=k! z_d^k\int_A\cdots\int_A |x_1|^{-d+2}|x_2-x_1|^{-d+2}\cdots |x_k-x_{k-1}|^{-d+2}|x_k-\mathbf{e}_1|^{-d+2}\,\mathrm{d}x_1\cdots\mathrm{d}x_k.
    \end{align*}
\begin{proof}
    Expanding the $k$th power, we have
    \begin{align*}
    & \mathbb{E}\left[\int_0^\infty \cdots \int_0^\infty 1_A(X_{t_1})\cdots 1_A(X_{t_k})\cdot 1_{t_1<T_0}\cdots 1_{t_k<T_0}\,\mathrm{d}t_1\cdots\mathrm{d}t_k\right]\\
    =~&k!\int\cdots\int_{0<t_1<\cdots<t_k<\infty} \mathbb{E}[1_A(X_{t_1})\cdots 1_A(X_{t_k})1_{t_k<T_0}]\,\mathrm{d}t_1\cdots\mathrm{d}t_k.
    \end{align*}
    Using the identity \eqref{eqn: identity}, the inner expectation is
    \begin{align*}
        &\mathbb{E}[1_A(X_{t_1})\cdots 1_A(X_{t_k}), t_k<T_0]\\
        =~&\mathbb{E}[1_A(W_{t_1})\cdots 1_A(W_{t_k}) \,h(W_{t_k})]\\
        =~&\int_A\cdots \int_A p_{t_1}(0,x_1)p_{t_2-t_1}(x_1,x_2)\cdots p_{t_k-t_{k-1}}(x_{k-1},x_k)|x_k-\mathbf{e}_1|^{-d+2}\,\mathrm{d}x_1\cdots \mathrm{d}x_k,
    \end{align*}
    where
    \[p_t(x,y)=\frac{1}{(2\pi)^{\frac{d}{2}}}\exp(-\frac{|x|^2}{2t})\]
    is the heat kernel associated with $\frac{1}{2}\Delta$ on $\mathbb{R}^d$.
    We now claim that
    \begin{equation}\label{eqn: multiple-greens}
        \begin{split}
        &\int_{0<t_1<\cdots<t_k<\infty} p_{t_1}(x-x_1)p_{t_2-t_1}(x_1,x_2)\cdots p_{t_k-t_{k-1}}(x_k,x_{k-1})\,\mathrm{d}t_1\cdots\mathrm{d}t_k\\
        =~& G(x_1)G(x_2,x_1)\cdots G(x_k,x_{k-1}).
        \end{split}
    \end{equation}
    To show \eqref{eqn: multiple-greens}, we integrate the time variables successively:
    \[\int_{t_{j-1}}^\infty p_{t_j-t_{j-1}}(x_j,x_{j-1})\,\mathrm{d}t_j=\int_0^\infty p_\tau(x_j,x_{j-1})\,\mathrm{d}\tau=G(x_j,x_{j-1}),\]
    We have used the identity
    \[\int_0^\infty p_\tau(x,y)\,\mathrm{d}\tau = G(x,y).\]
    This is essentially the probabilistic \emph{definition} of the Green's function, but we can check it from the definition \eqref{eqn: G}, starting from the heat kernel:
    \[\int_0^\infty e^{-\frac{|x|^2}{2\tau}}\,\frac{\mathrm{d}\tau}{(2\pi \tau)^{d/2}}=\frac{1}{(2\pi)^{d/2}}|x|^{-d+2}\int_0^\infty \tau^{-d/2}e^{-1/2\tau}\,\mathrm{d}\tau=z_d|x|^{-d+2},\]
    where 
    \[z_d=\frac{\Gamma(\frac{d}{2}-1)}{2\pi^{\frac{d}{2}}}.\]
    Here we have scaled the variable of integration by $|x|^2$ and used the definition of the Gamma function. Inserting \eqref{eqn: multiple-greens} into the integrals above, we obtain the result.
\end{proof}
\end{proposition}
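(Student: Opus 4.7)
The plan is to expand the $k$th power of the occupation-time integral by Fubini and then reduce every probabilistic quantity on $X_t$ to one on standard Brownian motion via the $h$-transform identity \eqref{eqn: identity}. First I would write
\[\mathbb{E}\left[\left(\int_0^{T_0}\mathbf{1}_A(X_t)\,\mathrm{d}t\right)^k\right]=\int_{(0,\infty)^k}\mathbb{E}\left[\prod_{j=1}^k\mathbf{1}_A(X_{t_j})\mathbf{1}_{t_j<T_0}\right]\mathrm{d}t_1\cdots\mathrm{d}t_k,\]
and then use symmetry in $(t_1,\ldots,t_k)$ to restrict the outer integration to the simplex $\{0<t_1<\cdots<t_k\}$, pulling out a factor of $k!$ and replacing the joint indicator by $\mathbf{1}_{t_k<T_0}$ (the remaining ones being automatic).

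Second, I would apply \eqref{eqn: identity} with $f(y_1,\ldots,y_k)=\prod_j\mathbf{1}_A(y_j)$ and $t=t_k$, so that the conditioned expectation equals the unconditioned expectation of the same product weighted by $h(W_{t_k})=|W_{t_k}-\mathbf{e}_1|^{2-d}$. Inserting the Markovian joint density of $(W_{t_1},\ldots,W_{t_k})$ in terms of the heat kernel $p_t(x,y)=(2\pi t)^{-d/2}\exp(-|x-y|^2/(2t))$, the ordered integrand becomes
\[\int_{A^k}p_{t_1}(0,x_1)\,p_{t_2-t_1}(x_1,x_2)\cdots p_{t_k-t_{k-1}}(x_{k-1},x_k)\,h(x_k)\,\mathrm{d}x_1\cdots\mathrm{d}x_k.\]

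Third, I would perform the time integration by the change of variables $\tau_j=t_j-t_{j-1}$ (with $t_0=0$), which decouples into $k$ independent $(0,\infty)$ integrals. Each one collapses via the definitional identity $\int_0^\infty p_\tau(x,y)\,\mathrm{d}\tau=G(x,y)$; I would verify this in one line by the substitution $\tau=|x-y|^2/(2s)$, which produces exactly the Gamma integral underlying the constant $z_d$ in \eqref{eqn: zd-def}. Swapping the time and space integrals by Fubini (nonnegativity) gives
\[k!\int_{A^k}G(0,x_1)\,G(x_1,x_2)\cdots G(x_{k-1},x_k)\,h(x_k)\,\mathrm{d}x_1\cdots\mathrm{d}x_k,\]
after which rewriting $h(x_k)=z_d^{-1}G(x_k,\mathbf{e}_1)$ yields the first stated equality, and substituting $G(x,y)=z_d|x-y|^{2-d}$ everywhere gives the second.

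The only genuinely delicate step is the use of \eqref{eqn: identity} at times below $T_0$: the Radon--Nikodym density $h(W_t)$ blows up at $\mathbf{e}_1$, but the indicator $\mathbf{1}_{t_k<T_0}$ confines the computation to the regime where $h$ is finite, so the $h$-transform representation is licit and no singular integration against the pole appears. The rest reduces to a standard semigroup/Chapman--Kolmogorov computation, and the boundedness of $A$ (together with $d\geq 3$) guarantees local integrability of all Green's-function factors, legitimizing the Fubini interchanges.
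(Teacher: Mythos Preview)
Your proposal is correct and follows essentially the same route as the paper: expand the $k$th power, symmetrize to the ordered simplex with a factor $k!$, apply the $h$-transform identity \eqref{eqn: identity} to pass to Brownian motion weighted by $h(W_{t_k})$, insert the heat-kernel product, and integrate out the time increments to produce the chain of Green's functions. Your explicit change of variables $\tau_j=t_j-t_{j-1}$ and the identification $h(x_k)=z_d^{-1}G(x_k,\mathbf{e}_1)$ are exactly the paper's successive time integration and final bookkeeping, just phrased a bit more tidily.
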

Lemma \ref{eq} follows immediately from the previous result applied to $A=[-M,M]^d$. We also note the following.
\begin{proposition} \label{prop: X-lim}
    Let 
    \begin{equation}\label{eqn: XM-def}
    X(M):=\int_0^{T_0}1_{[-M,M]^d}(X_t)\,\mathrm{d}t.
    \end{equation}
    Then $X(M)$ converges in distribution to the random variable $T_0$, with density \eqref{eqn: fdt}: for any bounded Lipschitz function $f$,
    \[\mathbb{E}[f(X(M))]=\mathbb{E}[f(T_0)]+o_{M\rightarrow \infty}(1).\]
    \begin{proof}
        $X(M)$ is monotone in $M$, so in fact $X(M)$ converges almost surely to $T_0$.
    \end{proof}
\end{proposition}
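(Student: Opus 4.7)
The plan is to upgrade the convergence claim to almost-sure convergence, since the density of $T_0$ is already identified in \eqref{eqn: fdt} and convergence in distribution would then follow trivially. The key observation is that $X(M)$ is monotone non-decreasing in $M$ and bounded above by $T_0$, so it suffices to show $\lim_{M\to\infty} X(M) = T_0$ almost surely.

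I would first argue that the path $\{X_t : 0 \le t < T_0\}$ is almost surely contained in a random bounded set. The process $X_t$ defined by \eqref{eqn: Xt} is continuous on $[0, T_0)$ by standard SDE theory (the drift is smooth away from $\mathbf{e}_1$), and by the definition \eqref{eqn: T0-def} one has $X_t \to \mathbf{e}_1$ as $t \uparrow T_0$. Since $T_0 < \infty$ almost surely (as recorded just before \eqref{eqn: fdt}), the path extended by continuity to $[0, T_0]$ via $X_{T_0} := \mathbf{e}_1$ is a continuous function on a compact interval, and hence its image is compact. Therefore there exists an almost surely finite random variable $M_0(\omega)$ such that $\{X_t : 0 \le t \le T_0\} \subset [-M, M]^d$ for all $M \ge M_0$.

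For such $M$, the indicator $\mathbf{1}_{[-M,M]^d}(X_t)$ equals $1$ for all $t \in [0, T_0)$, so $X(M) = \int_0^{T_0} 1 \,\mathrm{d}t = T_0$. This yields $X(M) \to T_0$ almost surely, and in particular in distribution, so $\mathbb{E}[f(X(M))] \to \mathbb{E}[f(T_0)]$ for any bounded Lipschitz (indeed, any bounded continuous) $f$ by the Portmanteau theorem.

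No serious obstacle is anticipated. The only subtle point is confirming that continuity of $X_t$ extends across $T_0$ and that the image is compact; both follow from pathwise existence/uniqueness of \eqref{eqn: Xt} up to each stopping time $T_r$ combined with the definition $T_0 = \lim_{r \downarrow 0} T_r$ and the fact that $|X_{T_r} - \mathbf{e}_1| = r$. Thus the proof reduces to the short monotonicity argument already indicated in the excerpt.
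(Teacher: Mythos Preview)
Your proposal is correct and follows the same route as the paper: monotonicity of $X(M)$ in $M$ gives almost-sure convergence, with the limit identified as $T_0$ via boundedness of the path on $[0,T_0]$. You supply more detail than the paper's one-line proof, but the underlying argument is identical.
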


\subsection{Proof of Theorem \ref{thm: main}}\label{sec: final-proof}
 Recall from the statement of Theorem \ref{thm: main} that we define sequences of random variables $D_n$, $R_n$  by
    \[D_n:=\mathrm{dist}(0,n\mathbf{e}_1)\]
    and 
    \[R_n:=R_{\mathrm{eff}}(0,n\mathbf{e}_1),\] and let $P_n$ denote the number of pivotal edges for connection  $0\leftrightarrow n\mathbf{e}_1$ ($P_n$ is set to zero if no connection exists.) We further introduce the normalization constants
    \begin{align*}
      c_D&:=\frac{z_d}{2d\alpha_d\mathbf{c}\beta},\\
      c_R&:=\frac{z_d}{2d\alpha_r\mathbf{c}\beta },\\
      c_P&:= \frac{z_d}{2d\alpha_p\mathbf{c}\beta }.
    \end{align*}
     We wish to show that the three limits
    \[\mathbb{P}\big(n^{-2} c_D D_n\in \cdot\mid 0\leftrightarrow n\mathbf{e}_1\big),\]
    \[\mathbb{P}\big(n^{-2} c_R R_n\in \cdot\mid 0\leftrightarrow n\mathbf{e}_1)\]
    and
    \[\mathbb{P}\big(n^{-2} c_P P_n\in \cdot\mid 0\leftrightarrow n\mathbf{e}_1)\]
    exist and coincide with the distribution of $T_0$ in \eqref{eqn: T0-def}.

Let $A_n = \{0 \lra n \mathbf{e}_1 \text{ and there exists a pivotal edge}\}$. Note that $\mathbb{P}(0 \lra n \mathbf{e}_1) / \mathbb{P}(A_n) \to 1$, so it suffices to show the statement claimed in the lemma conditional on $A_n$ instead of $\{0 \lra n \mathbf{e}_1\}$.  Let $Y_n$ denote any of the three quantities $D_n$, $R_n$ or $P_n$ and let $c_*$ denote the corresponding normalization constant. By the usual argument, it suffices to show that the sequence
\[\mathbb{E}[f(n^{-2}Y_n)\mid A_n]\]
converges to 
\[\mathbb{E}[f(c_*^{-1} T_0)]\]
for any bounded, 1-Lipschitz function.

Let
\begin{align}
D_n^M&=\sum_{e\in B(Mn)}D(bubble(\bar{e}))\mathbbm{1}_{e \text{ pivotal for } 0\leftrightarrow n \mathbf{e}_1} \label{eqn: LnM},\\
R_n^M&=\sum_{e\in B(Mn)}r_{bubble(\bar{e})}\mathbbm{1}_{e \text{ pivotal for } 0\leftrightarrow n \mathbf{e}_1} \label{eqn: RnM},\\
P_n^M&=\sum_{e\in B(Mn)}\mathbbm{1}_{e \text{ pivotal for } 0\leftrightarrow n \mathbf{e}_1}. \label{eqn: PnM}
\end{align}
These are truncated versions of the quantities $D_n$, $R_n$ and $P_n$ where only the contribution from pivotals inside the box $[-Mn,Mn]^d$ is considered. By Proposition \ref{prop: outside-M},
\begin{equation}\label{eqn: M-trunc-main-estimate}
\mathbb{E}[|D_n^M-D_n|]\le  Cn^{-d+4}M^{-d+4},
\end{equation}
with similar estimates for $R_n$ and $P_n$. 

Let $Y_n^M$ be any of the truncated quantities introduced in \eqref{eqn: LnM}, \eqref{eqn: RnM}, \eqref{eqn: PnM}, so that
\begin{align*}
Y_n&=\sum_{e\in \mathcal{E}(\mathbb{Z}^d)}h(\bar{e})\mathbbm{1}_{e \text{ pivotal for } 0\leftrightarrow n \mathbf{e}_1},\\
Y_n^M&=\sum_{e\in B(Mn)}h(\bar{e})\mathbbm{1}_{e \text{ pivotal for } 0\leftrightarrow n \mathbf{e}_1},
\end{align*}
with $h(\bar{e})=D(bubble(\bar{e}))$, $h(\bar{e})=r_{bubble(\bar{e})}$, or $h=1$.
It follows from \eqref{eqn: M-trunc-main-estimate} and the corresponding estimates for $R_n$ and $P_n$ that
\begin{equation}n^{-2}\mathbb{E}[|Y_n^M-Y_n|\mid 0\leftrightarrow n\mathbf{e}_1]\le CM^{-d+4}.
\end{equation}
In particular, for a 1-Lipschitz function $f$, we have
\begin{equation}\label{eqn: M-trunc-result}
\mathbb{E}[f(n^{-2}Y_n)\mid 0\leftrightarrow n\mathbf{e}_1] = \mathbb{E}[f(n^{-2}Y_n^M)\mid 0\leftrightarrow n\mathbf{e}_1]+O(M^{-d+4}).
\end{equation}

\subsubsection*{$L$-truncation}
We introduce the decomposition    
    \begin{align*}
   Y_n^M&= \sum_{e\in B(Mn)}h(\bar{e})\mathbbm{1}_{e \text{ pivotal for } 0\leftrightarrow n \mathbf{e}_1}\\
   &=\sum_{e\in B(Mn)} h(\bar{e})\mathbbm{1}_{diam(bubble(\bar{e}))>L/2}+\sum_{e\in B(Mn)} h(\bar{e})\mathbbm{1}_{0<diam(bubble(\bar{e}))\le L/2}\\
   &=:Y^M_{n,>L}+Y^M_{n,L}
    \end{align*}
    In the second equality, we take $diam(bubble(\overline{e}))>0$ to include the event that $e$ is pivotal for $0\leftrightarrow n\mathbf{e}_1$.
    By Proposition \ref{prop: large-bubble-trunc} in Section \ref{sec: control}, we have 
    \begin{equation}\label{eqn: P-truncate}
    n^{-2}\mathbb{E}[Y^M_{n,>L} \mid 0\lra n\mathbf{e}_1]\le C(\log L)^3L^{6-d}
    \end{equation}
    uniformly in $n$ and $M$.

    Let $f:\mathbb{R}\rightarrow\mathbb{R}$ be a bounded, 1-Lipschitz function. Write:
    \begin{equation*}
      \mathbb{E}[f(n^{-2}Y^M_{n})\mid 0\leftrightarrow  n\mathbf{e}_1]=\mathbb{E}[f(n^{-2}Y^M_{n,L})\mid 0\leftrightarrow  n\mathbf{e}_1]+\mathbb{E}[f(n^{-2}Y^M_n)-f(n^{-2}Y^M_{n,L})\mid 0\leftrightarrow n\mathbf{e}_1].
    \end{equation*}
    Then by \eqref{eqn: P-truncate}, the second term is bounded by
    \[C\|f\|_{\mathrm{Lip}} (\log L)^3 L^{6-d},\]
   so we have
    \begin{equation}\label{eqn: dist-trunc-bound}
      \mathbb{E}[f(n^{-2}Y^M_{n})\mid 0\leftrightarrow  n\mathbf{e}_1]=\mathbb{E}[f(n^{-2}Y^M_{n,L})\mid 0\leftrightarrow  n\mathbf{e}_1]+O(L^{-2}).
    \end{equation}

    Next, we replace $h(e_i)$ by the truncated quantities introduced in Section \ref{sec: local-v}. This step is only needed if we are not considering the number of pivotals. Let $h_L$ denote either of the truncated quantities \eqref{eqn: L-bubble} or \eqref{eqn: L-resistance} and 
    \[Y^{M,\mathrm{loc}}_{n,L}:=\sum_{e=(z,w)\in \mathcal{E}(\mathbb{Z}^d)} h_L(e)\mathbf{1}_{diam(bubble(e))\le L/2}.\]
    We now write
    \begin{equation*}
    \begin{split}
    &\mathbb{E}[f(n^{-2}Y^M_{n,L})\mid 0\leftrightarrow  n\mathbf{e}_1]\\
    =~&\mathbb{E}[f(n^{-2}Y^{M,
    \mathrm{loc}
    }_{n,L})\mid 
    0\leftrightarrow  n\mathbf{e}_1]+\mathbb{E}[f(n^{-2}Y^M_{n,L})-f(n^{-2}Y^{M,
    \mathrm{loc}
    }_{n,L})\mid 
    0\leftrightarrow  n\mathbf{e}_1]
    \end{split}
    \end{equation*}
    The second term is bounded by
    \[
    |\mathbb{E}[f(n^{-2}Y^M_{n,L})-f(n^{-2}Y^{M,
    \mathrm{loc}
    }_{n,L})\mid 
    0\leftrightarrow  n\mathbf{e}_1]|\le \|f\|_{\mathrm{Lip}}\mathbb{E}[|Y^M_{n,L}- Y^{M,
    \mathrm{loc}
    }_{n,L}|].\]
    
    By Proposition \ref{prop: local-trunc}, in Section \ref{sec: truncate}, this is $O(L^{-1/2})$, so we have the approximation:
    \begin{equation}\label{eqn: pre-trunc}
        \mathbb{E}[f(n^{-2}Y^M_{n,L})\mid 0\leftrightarrow  n\mathbf{e}_1] =\mathbb{E}[f(n^{-2}Y^{M,
    \mathrm{loc}
    }_{n,L})\mid 
    0\leftrightarrow  n\mathbf{e}_1]+o_{L\rightarrow\infty}(1).
    \end{equation}

\begin{proposition} \label{prop: moment-conv}
    Let $\nu_n^{M,L}$ be the distribution of 
    \[\frac{z_d Y^{\mathrm{loc}}_{M,L}}{2d \mathbf{c}\alpha_{h_L}\beta n^2}\] 
    under $\mathbb{P}(\,\cdot \mid 0\leftrightarrow n\mathbf{e}_1)$. Then $\nu_n^{M,L}$ converges weakly to a law $\nu^{M,L}$. The law $\nu^{M,L}$ is described explicitly in terms of Brownian motion in Section \ref{section: BM}.
\end{proposition}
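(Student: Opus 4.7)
The plan is to use the method of moments: compute all moments of the rescaled variable, identify them as moments of the Brownian occupation time $X(M)$ from Section \ref{section: BM}, and conclude weak convergence via Carleman's moment-determinacy criterion.

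First, for the $k$-th moment I would write
\[
\mathbb{E}\bigl[(Y^{M,\mathrm{loc}}_{n,L})^k, \, 0\!\lra\! n\mathbf{e}_1\bigr]
=\sum_{(e_1,\dots,e_k)\in B(Mn)^k} \mathbb{E}\left[\prod_{i=1}^{k} h_L(e_i)\,\mathbf{1}_{e_i\,\text{pivotal for}\,0\!\lra\! n\mathbf{e}_1}\right],
\]
and split according to whether the $e_i$ are all distinct. On the distinct-edge event the pivotal edges inherit a canonical ordering from the open path $0\to n\mathbf{e}_1$, so the distinct part of the sum equals $k!$ times the sum over ordered pivotal tuples $(f_1,\ldots,f_k)$. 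Since $C^{f_i}(0)$ and $\zeta^*_{i-1}$ agree inside $B(\overline{f_i},L)$ on this event, property (ii) of Definition \ref{def: local-var} allows me to replace $h_L(f_i)$ by $g_L(f_i,\zeta^*_{i-1})$, and Theorem \ref{thm: local-limit} then yields the asymptotic
\[
(1+o(1))\,k!\,\mathbf{c}^{k+1}(\alpha_{g_L}\beta)^k(2d)^k\,I_k(M)\,n^{2(k+1)-d}.
\]
Diagonal terms where $e_i=e_j$ for some pair reduce the number of distinct pivotal factors by at least one; combining the uniform bound $h_L\le CL^d$ from property (iii) with the near-regime analysis underlying Proposition \ref{nearregime-thm} shows that these contribute only $O(n^{2k-d})$, smaller by a factor $n^{-2}$.

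Dividing by $\mathbb{P}(0\!\lra\! n\mathbf{e}_1)\sim \mathbf{c}\,n^{2-d}$ and by the normalization in the statement identifies the limit moments as $m_k:=k!\,z_d^{\,k}\,I_k(M)$. By Lemma \ref{eq} these are precisely the moments of $X(M)=\int_0^{T_0}\mathbf{1}_{[-M,M]^d}(X_t)\,dt$, so we take $\nu^M$ to be its law. A crude iterated-convolution estimate yields $I_k(M)\le C^k M^{2k+2-d}$, giving $m_{2k}^{-1/(2k)}\gtrsim (k M^2)^{-1}$, so Carleman's condition $\sum_k m_{2k}^{-1/(2k)}=\infty$ holds and $\nu^M$ is uniquely determined by its moments; the method of moments then promotes the moment convergence to the claimed weak convergence. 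The main obstacle will be the bookkeeping needed to handle the diagonal contributions and to match the cluster arguments $C^{f_i}(0)$ and $\zeta^*_{i-1}$ for the application of Theorem \ref{thm: local-limit}; the remaining steps are a routine combination of that theorem, Lemma \ref{eq}, and a classical moment-determinacy argument.
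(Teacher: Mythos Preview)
Your proposal is correct and follows exactly the route the paper intends: the paper's proof is the single line ``This follows from convergence of moments, together with Lemma \ref{lem: moments},'' and you have simply unpacked what that entails---the $k!$ from ordering the pivotals, the identity $C^{f_i}(0)=\zeta^*_{i-1}$ on the ordered-open-pivotal event so that Theorem \ref{thm: local-limit} applies, the negligibility of diagonal terms, the identification of the limiting moments via Lemma \ref{eq}/Proposition \ref{lem: moments}, and Carleman's criterion for determinacy. The moment-growth bound $I_k(M)\le C^kM^{2k+2-d}$ you invoke is precisely the estimate the paper had recorded (and then commented out) for this purpose.
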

\begin{proof}
    Theorem \ref{thm: local-limit} implies convergence of the moments of $Y^{\mathrm{loc}}_{M,L}$, convergence of moments and Lemma \ref{lem: moments} identifies the limiting distribution.
\end{proof}

\begin{proposition}\label{prop: alphaL} The quantities $\alpha_{r,L}$ and $\alpha_{d,L}$ in \eqref{eqn: L-bubble} and \eqref{eqn: L-resistance} converge as $L\rightarrow\infty$. We denote the limits by $\alpha_r$ and $\alpha_d$, respectively.
\end{proposition}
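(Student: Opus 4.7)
The plan is to establish convergence of both sequences by a monotone convergence argument. First I would show that, pointwise in $\omega$, both $g_{d,L}(f,V)$ and $g_{r,L}(f,V)$ are monotone non-decreasing functions of $L$. As $L$ grows, the target set $S_L(f,V) = \{x \in C^V(\overline f) : x \leftrightarrow \partial B(\overline f, L)\text{ off }bubble_V(\overline f)\}$ shrinks (the long-range reachability condition becomes more stringent), so both the chemical distance and the effective resistance from $\overline f$ to $S_L(f,V)$ weakly increase. At the same time, the diameter cutoff $\mathbbm{1}_{\mathrm{diam}(bubble_V(\overline f)) \le L/2}$ is non-decreasing in $L$. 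Taking expectations under $\nu_{\mathbf e_1} \otimes \widetilde{\nu_0}$ against the fixed indicator of $E=\{\exists\text{ path to }\infty\text{ in }W_{\mathbf{e}_1}\text{ off }\widetilde W_0\}$ shows that both $\alpha_{d,L}$ and $\alpha_{r,L}$ are non-decreasing in $L$.

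The second step is to control these sequences uniformly in $L$. Since effective resistance is always bounded above by chemical distance, $g_{r,L}\le g_{d,L}$ pointwise, so it suffices to bound $\alpha_{d,L}$. I would dominate the integrand by the full (untruncated) distance $D(bubble_{\widetilde W_0}(\mathbf{e}_1))$, which in turn is bounded by the number of vertices in the bubble. Using the BK inequality in the style of Section \ref{sec: control} applied to the two disjoint connections from a putative bubble vertex $y$ to $\mathbf{e}_1$ together with the one-arm estimate \eqref{eqn: KN} (and translating the resulting $\mathbb{P}_{p_c}$ bounds to the product IIC measure via Theorem \ref{thm:IICexistences} and the shift relation \eqref{rv-trans}), one obtains a finite upper bound
\[
\mathbb{E}_{\nu_{\mathbf e_1}} \otimes \mathbb{E}_{\widetilde{\nu_0}}\bigl[\mathbbm{1}_E\cdot D(bubble_{\widetilde W_0}(\mathbf{e}_1))\bigr] < \infty,
\]
uniformly in $L$. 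An equivalent route is to derive the estimate first at finite $n$ using Proposition \ref{prop: large-bubble-trunc} (dyadically summing $\mathbb{P}(\mathrm{diam}(bubble(\overline f))\in[2^k,2^{k+1}),f\text{ pivotal}) \lesssim 2^{-2k}$ against $D(bubble)\lesssim 2^k \cdot \#\{\text{vertices}\}$ or more sharply $\lesssim 2^{2k}$), and then pass through the IIC limit of the single-pivotal ($k=1$) identity from Theorem \ref{thm: local-limit}. The monotone convergence theorem then yields the existence of the limits $\alpha_d$ and $\alpha_r$.

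The main obstacle is the uniform boundedness in Step 2: one needs to show that the integrand, which a priori depends on the status of arbitrarily distant edges, has integrable dominating function under the product IIC measure. The standard BK/triangle-condition approach at $p_c$ must be adapted, either by using Theorem \ref{thm:IICexistences} to translate finite-$n$ estimates, or by directly invoking the IIC one-arm decay. Once the uniform bound is in hand, the rest is straightforward; no quantitative rate of convergence in $L$ is required for the application in Section \ref{sec: final-proof}, though one could extract one by combining the monotonicity with the $O(L^{-2})$ tail estimate underlying Proposition \ref{prop: large-bubble-trunc}.
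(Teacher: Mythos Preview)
Your proposal is correct and follows essentially the same route as the paper: both arguments establish pointwise monotonicity of the integrand in $L$ (via the inclusion $S_{L'}\supset S_L$ for $L'\le L$ and the monotonicity of the diameter cutoff) and then invoke monotone convergence. The only difference in emphasis is that the paper identifies the almost-sure pointwise limit explicitly as the distance to $S_\infty(\vec{\mathbf e}_1,\widetilde W_0)$, using the $O(L^{-2})$ estimates behind Propositions~\ref{prop: local-trunc} and~\ref{prop: large-bubble-trunc} transferred to the product IIC measure, whereas you instead supply a uniform-in-$L$ upper bound to secure finiteness of the limit; your Step~2 is in fact a point the paper leaves implicit, and your sketch of it (dominate by the bubble volume and control via BK/one-arm, or pass through the $k=1$ case of Theorem~\ref{thm: local-limit}) is a sound way to fill that in.
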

\begin{proof}
We give the proof for the case of the distance. The resistance is handled similarly. We begin by writing
\[ \alpha_{d,L}=\mathbb{E}_{\nu_{\mathbf{e}_1}} \otimes \mathbb{E}_{\tilde{\nu}_{0}}[\mathbbm{1}_{\exists\,\text{a path to}\,\infty\,\text{in}\,W_{\mathbf{e}_1}\,\text{off}\,\widetilde{W}_{0}} \cdot \mathrm{dist}_{bubble_{\widetilde{W}_{0}}(\vec{\mathbf{e}}_1)}\big(\mathbf{e}_1,S_L({\vec{\mathbf{e}}_1},\widetilde{W}_0)\big)\mathbbm{1}_{\mathrm{diam}(bubble(\mathbf{e}_1))\le L/2}],\]
with $\vec{\mathbf{e}}_1=(0,\mathbf{e}_1)$.
By Monotone Convergence, it suffices to note that the integrand 
\begin{equation}\label{eqn: monotone-lim}
\mathrm{dist}_{bubble_{\widetilde{W}_{0}}(\vec{\mathbf{e}}_1)}\big(\mathbf{e}_1,S_L({\vec{\mathbf{e}}_1},\widetilde{W}_0)\big)\mathbbm{1}_{\mathrm{diam}(bubble(\mathbf{e}_1))\le L/2}
\end{equation}
increases to 
\begin{equation}\label{eqn: S_infty}
\mathrm{dist}_{bubble_{\widetilde{W}_{0}}(\vec{\mathbf{e}}_1)}\big(\mathbf{e}_1,S_\infty({\vec{\mathbf{e}}_1},W_{\mathbf{e}_1})\big)
\end{equation}
almost surely with respect to the product measure. (See \eqref{eqn: SLVf} and \eqref{eqn: SinftyV} for the definitions of $S_L$ and $S_\infty$.) To see this, note that
\begin{align*}
&\beta_c\cdot \tilde{\mathbb{E}}[\mathbb{P}(\mathrm{diam}(bubble(\vec{\mathbf{e}}_1))> L/2, \tilde{C}(0)\cap C(\mathbf{e}_1)=\emptyset\mid \mathbf{e}_1\lra n\mathbf{e}_1)\mid 0\lra  -m\mathbf{e}_1]\\
=~&\frac{\mathbb{P}(\vec{\mathbf{e}}_1 \text{ pivotal  for } -m\mathbf{e}_1\leftrightarrow n\mathbf{e}_1, \mathrm{diam}(bubble(\vec{\mathbf{e}}_1))> L/2)}{\mathbb{P}(0\lra  -m\mathbf{e}_1)\mathbb{P}(\mathbf{e}_1 \lra n\mathbf{e}_1)}.
\end{align*}
A straightforward modification of Lemma \ref{lem: distances-differ} shows that the event in the probability in the numerator implies
\[\{-m\mathbf{e}_1\lra \mathbf{e}_1 \}\circ \{\mathbf{e}_1\lra n\mathbf{e}_1\}\circ \{\mathbf{e}_1\lra \partial B(\mathbf{e}_1,L/2)\},\]
so
\[\mathbb{P}(\vec{\mathbf{e}}_1 \text{ pivotal  for } -m\mathbf{e}_1\leftrightarrow n\mathbf{e}_1, \mathrm{diam}(bubble(\vec{\mathbf{e}}_1))> L/2)\le CL^{-2}n^{-d+2}m^{-d+2}.\]
From this, we infer that 
\[\tilde{\mathbb{E}}[\mathbb{P}(\mathrm{diam}(bubble(\vec{\mathbf{e}}_1))> L/2, \tilde{C}(0)\neq C(\mathbf{e}_1)\mid \mathbf{e}_1\lra n\mathbf{e}_1)\mid 0\lra  -m\mathbf{e}_1]\le CL^{-2}\]
uniformly in $n,m\in \mathbb{Z}_+$, so
\[(\nu_{\mathbf{e}_1}\otimes \tilde{\nu}_0)\big(\mathrm{diam}(bubble(\mathbf{e}_1))> L/2\big)\rightarrow 0\]
as $L\rightarrow 0$, so indeed
\[\mathbbm{1}_{\mathrm{diam}(bubble(\mathbf{e}_1))\le L/2}\uparrow 1\]
$(\nu_{\mathbf{e}_1}\otimes \tilde{\nu}_0)$-almost surely. 

As for the distance $\mathrm{dist}_{bubble(W_{\mathbf{e}_1})}\big(\mathbf{e}_1,S_L({\vec{\mathbf{e}}_1},W_{\mathbf{e}_1})\big)$, it is increasing in $L$ since
\begin{equation}\label{eqn: increasing-SL} 
S_L(\vec{\mathbf{e}_1},W_{\mathbf{e}_1})\subset S_{L'}(\vec{\mathbf{e}_1},W_{\mathbf{e}_1})
\end{equation}
for $L'\le L$. That it increases $(\nu_{\mathbf{e}_1}\otimes \tilde{\nu}_0)$-almost surely almost surely to its limit \eqref{eqn: S_infty} follows from \eqref{eqn: diff-bound}. Let 
\begin{align*}A_\infty&= \cap_L \{\mathrm{dist}_{bubble_{\widetilde{W}_{0}}(\vec{\mathbf{e}}_1)}\big(\mathbf{e}_1,S_L({\vec{\mathbf{e}}_1},W_{\mathbf{e}_1})\big) \neq  \mathrm{dist}_{bubble_{\widetilde{W}_{0}}(\vec{\mathbf{e}}_1)}\big(\mathbf{e}_1,S_\infty({\vec{\mathbf{e}}_1},W_{\mathbf{e}_1})\big) \}\\
&:=\cap_L A_L,
\end{align*}
Then, as in the proof of \eqref{eqn: diff-bound} in Proposition \ref{prop: local-trunc}  we have
\[\tilde{\mathbb{E}}[\mathbb{P}(A_L\mid \mathbf{e}_1\lra n\mathbf{e}_1)\mid 0\lra -m\mathbf{e}_1 ] \le CL^{-2}\]
uniformly in $n$ and $m$, so 
\[(\nu_{\mathbf{e}_1}\otimes \tilde{\nu}_0)(A_\infty)=0,\]
confirming the almost-sure monotone convergence of \eqref{eqn: monotone-lim} to \eqref{eqn: S_infty}.
\end{proof}

\subsubsection{Conclusion}
\begin{proof}[Proof of Theorem  \ref{thm: main}]
Let $f$ be a bounded, $1$-Lipschitz function. Combining \eqref{eqn: M-trunc-result}, \eqref{eqn: dist-trunc-bound} and \eqref{eqn: pre-trunc}, we have
\[\mathbb{E}[f(n^{-2} Y_n)\mid 0\leftrightarrow n\mathbf{e}_1]=\mathbb{E}[f(n^{-2}Y_{n,L}^{M,\mathrm{loc}})]+O(M^{-d+4})+o_{L\rightarrow\infty} (1).\]
By Proposition \ref{prop: moment-conv}, 
\[\mathbb{E}[f(n^{-2}Y_n)\mid 0\leftrightarrow n\mathbf{e}_1]=\mathbb{E}[f(c_*^{-1}\cdot X(M))]+o_{n\rightarrow\infty}(1),\]
where $X(M)$ is the random variable \eqref{eqn: XM-def}. 
From Proposition \ref{prop: alphaL}, we have
\[\mathbb{E}[f(c_*^{-1}\cdot  X(M))]=\mathbb{E}[f(c_*^{-1}\cdot X(M))]+o_{L\rightarrow\infty}(1).\]
Finally, by Proposition \ref{prop: X-lim}, we find
\[\mathbb{E}[f(c_*^{-1}\cdot  X(M))]=\mathbb{E}[f(c_*^{-1}\cdot  T_0)]+o_{M\rightarrow\infty}(1).\]
Taking the limits $n\rightarrow\infty$, $M\rightarrow \infty$ and $L\rightarrow \infty$ in that order concludes the proof of Theorem \ref{thm: main}.
\end{proof}
\bigskip
{\bf Acknowledgements.} We are grateful to Manuel Cabezas and Alexander Fribergh for comments on earlier versions of this manuscript. We should also like to thank David Croydon and Noe Kawamoto for pointing out a mistake in the way the truncation was done in Sections 3 and 5 in said previous version.

The research of S.~C.~was supported by NSF grant DMS-2154564. The research of J.~H.~was supported by NSF grant DMS-1954257. The research of P.S. was supported by NSF grants DMS-2154090 and DMS-2238423, and a Simons Fellowship. This work was completed while P.S. was in residence at SLMath.

\end{document}